\documentclass[11pt]{article} \usepackage[utf8]{inputenc}
\usepackage[T1]{fontenc}
\usepackage{lmodern}
\usepackage[english]{babel}
\usepackage{showlabels}

\usepackage[a4paper,vmargin={3.5cm,3.5cm},hmargin={2.5cm,2.5cm}]{geometry}
\usepackage[font={small,sf}, labelfont={sf,bf}, margin=1cm]{caption}

\usepackage{xspace}

\usepackage[pdftex,colorlinks=true]{hyperref}
\usepackage[expansion]{microtype}
\usepackage[pdftex]{color,graphicx}
\usepackage{amsmath,amsfonts,amssymb,amsthm,mathrsfs,ulem}
\usepackage{stackrel,framed}
\usepackage{subfig}
\usepackage{stmaryrd} 
\usepackage{wasysym}

\usepackage{enumitem}
\setlist{nosep}
\synctex=1

\usepackage{makeidx}
\makeindex

\usepackage[columns=1]{idxlayout}

\theoremstyle{plain}

\newtheorem{corollary}{Corollary}
\newtheorem{theorem}[corollary]{Theorem}
\newtheorem{proposition}[corollary]{Proposition}

\newtheorem{lemma}[corollary]{Lemma}

\newtheorem{claim}[corollary]{Claim}

\newtheorem{definition}[corollary]{Definition}

\theoremstyle{remark}
\newtheorem{remark}{Remark}
\setcounter{tocdepth}{2}

\graphicspath{{images/}}

% allows to change the order between text and math in index entries. 
\newcommand{\ttt}[2]{#1\xspace #2}

%Gromov Hausdorff stuff
\newcommand{\dghp}{\ensuremath{d_{\mathrm{GHP}}}}

 %requires package stmaryd

%Probabilistic convergence etc. 

%%%%

\usepackage[dvipsnames]{xcolor}

\newcommand{\defeq}{\overset{\hbox{\tiny{def}}}{=}}
\renewcommand{\d}{\mathrm{d}}

\newcommand{\E}{\mathbb{E}}

\newcommand{\K}{\mathbb{K}}
\newcommand{\M}{\mathbb{M}}
\newcommand{\N}{\mathbb{N}}

\newcommand{\R}{\mathbb{R}}

\newcommand{\Z}{\mathbb{Z}}

\newcommand{\kA}{\mathcal{A}}
\newcommand{\kB}{\mathcal{B}}
\newcommand{\kC}{\mathcal{C}}
\newcommand{\kD}{\mathcal{D}}
\newcommand{\kE}{\mathcal{E}}
\newcommand{\kF}{\mathcal{F}}
\newcommand{\kG}{\mathcal{G}}
\newcommand{\kH}{\mathcal{H}}

\newcommand{\kK}{\mathcal{K}}
\newcommand{\kL}{\mathcal{L}}
\newcommand{\kM}{\mathcal{M}}

\newcommand{\kP}{\mathcal{P}}
\newcommand{\kQ}{\mathcal{Q}}
\newcommand{\kR}{\mathcal{R}}
\newcommand{\kS}{\mathcal{S}}
\newcommand{\kT}{\mathcal{T}}
\newcommand{\kU}{\mathcal{U}}
\newcommand{\kV}{\mathcal{V}}

\newcommand{\kX}{\mathcal{X}}
\newcommand{\kY}{\mathcal{Y}}
\newcommand{\kZ}{\mathcal{Z}}

\newcommand{\bn}{\mathbf{n}}

\newcommand{\ve}{\varepsilon}

\def\map{\mathrm{m}}
\def\network{\mathrm{N}}
\def\mapk{\mathrm{k}}

\newcommand\carre{{\scalebox{.4}{\ensuremath{\blacksquare}}\xspace}}
\newcommand\trait{-\xspace}

\newcommand\vp{v_+}
\newcommand\vm{v_-}

\newcommand\Gi{G^{(i)}}

\newcommand\nustar{\nu_\star}

\newcommand\oN{\overline{\network}}

\newcommand\xc{x_{\mathrm{c}}}

\newcommand\Cp{C^\bullet}
\newcommand\Mp{M^\bullet}
\newcommand\kCnp{\kC_n^{\bullet}}

\newcommand\cn{\mathfrak{C}_n}
\newcommand\mn{\mathfrak{M}_n}
\newcommand\tn{\mathfrak{t}_n}
\newcommand\cnp{\cn^{\bullet}}
\newcommand\mnp{\mn^{\bullet}}
\newcommand\kn{\mathfrak{K}_n}
\newcommand\qn{\mathfrak{q}_n}

\newcommand\cq{\mathfrak{C}^{(q)}}
\newcommand\cnq{\cn^{(q)}}
\newcommand\mq{\mathfrak{M}^{(q)}}
\newcommand\mnq{\mn^{(q)}}

\newcommand{\cQ}{\ensuremath{\kQ_{(1)}}}

\newcommand\mg{\mathrm{g}}

\newcommand\cTc{\ensuremath{\kT^\carre}\xspace}
\newcommand\cTt{\ensuremath{\kT^\trait}\xspace}
\newcommand\cTtm{\ensuremath{\kT^{\circ\!\trait\!\circ}}\xspace}
\newcommand\cTct{\ensuremath{\kT^{\bullet\!\!\trait}}\xspace}

%cubiques 3-connexes
\newcommand{\cc}{\mathfrak{K}}
\newcommand{\ft}{\mathfrak{t}}
\newcommand{\fqn}{\qn}

\DeclareMathOperator{\dist}{dist}

\newcommand{\dgr}{\ensuremath{\mathrm{d}_{\mathrm{gr}}}}

\newcommand{\dstar}{\ensuremath{\mathrm{d}^\star}}
\newcommand{\dGH}{\ensuremath{\mathrm{d}_\mathrm{GH}}}
\newcommand{\dProk}{\ensuremath{\mathrm{d}_\mathrm{LP}}}
\newcommand{\dtProk}{\ensuremath{\tilde{\mathrm{d}}_\mathrm{LP}}}
\newcommand{\dGHP}{\ensuremath{\mathrm{d}_\mathrm{GHP}}}
\DeclareMathOperator{\dis}{dis}
\newcommand{\maxdeg}{\ensuremath{\mathrm{maxDeg}}}

\renewcommand{\leq}{\leqslant}
\renewcommand{\geq}{\geqslant}

%Probability laws

\def\hcEq{\hat{\kE}_q}
\def\tcEq{\tilde{\kE}_q}

\newcommand\wmax{w^{\star}}

%% notations section 2-point function core with modified distances

\def\Kq{\cc_q}
\def\hKq{\widehat{\cc}_q}
\def\tKq{\widetilde{\cc}_q}
\newcommand\hL{\widehat{L}}
\newcommand\tL{\widetilde{L}}
\newcommand\hdist{\widehat{\mathrm{d}}}
\newcommand\tdist{\widetilde{\mathrm{d}}}

\newcommand\hdelta{\widehat{\delta}}
\newcommand\tdelta{\widetilde{\delta}}

\newcommand\hNuq{\widehat{\kV}_q}
\newcommand\tNuq{\widetilde{\kV}_q}

\newcommand\eps{\epsilon}
\newcommand\bdelta{\boldsymbol\delta}

\def\Dist{\mathrm{Mult}}

\newcommand\Seq{\mathrm{Seq}}

\newcommand\bhN{\widehat{\mathbf{N}}}
\newcommand\btN{\widetilde{\mathbf{N}}}

\newcommand\hN{\widehat{N}}
\newcommand\tN{\widetilde{N}}

\newcommand\ellk{\ell^{(k)}}
\newcommand\hellk{\widehat{\ell}^{(k)}}
\newcommand\tellk{\widetilde{\ell}^{(k)}}

\newcommand\hLk{\widehat{L}^{(k)}}
\newcommand\tLk{\widetilde{L}^{(k)}}

\newcommand\hdistk{\widehat{\mathrm{d}}^{(k)}}
\newcommand\tdistk{\widetilde{\mathrm{d}}^{(k)}}

\newcommand\ck{c^{(k)}}

\newcommand\hell{\widehat{\ell}}
\newcommand\tell{\widetilde{\ell}}

\newcommand\Xk{X^{(k)}}

\newcommand\ha{\hat{a}}

\newcommand\hKqk{\hKq^{(k)}}
\newcommand\tKqk{\tKq^{(k)}}

\newcommand\hgammak{\widehat{\gamma}^{(k)}}
\newcommand\tgammak{\widetilde{\gamma}^{(k)}}

\newcommand{\lablm}{L\xspace}
\newcommand{\labrm}{R\xspace}
\newcommand{\labmm}{M\xspace}
\newcommand{\labtm}{T\xspace}
\newcommand{\labsm}{S\xspace}
\newcommand{\labpm}{P\xspace}
\newcommand{\labhm}{H\xspace}

\newcommand{\nodel}{\labl-node\xspace}
\newcommand{\noder}{\labr-node\xspace}
\newcommand{\nodem}{\labm-node\xspace}
\newcommand{\nodet}{\labt-node\xspace}

\newcommand{\noders}{\labr-nodes\xspace}

\newcommand{\compl}{\labl-component\xspace}
\newcommand{\compr}{\labr-component\xspace}
\newcommand{\compm}{\labm-component\xspace}
\newcommand{\compt}{\labt-component\xspace}

\newcommand{\compls}{\labl-components\xspace}
\newcommand{\comprs}{\labr-components\xspace}
\newcommand{\compms}{\labm-components\xspace}
\newcommand{\compts}{\labt-components\xspace}

\newcommand{\typel}{type \labl}
\newcommand{\typer}{type \labr}
\newcommand{\typem}{type \labm}
\newcommand{\typet}{type \labt}

\newcommand{\labl}{$\lablm$\xspace}
\newcommand{\labr}{$\labrm$\xspace}
\newcommand{\labm}{$\labmm$\xspace}
\newcommand{\labt}{$\labtm$\xspace}
\newcommand{\labs}{$\labsm$\xspace}
\newcommand{\labp}{$\labpm$\xspace}
\newcommand{\labh}{$\labhm$\xspace}

\newcommand{\attr}{\labr-attached\xspace}

\newcommand{\attt}{\labt-attached\xspace}

\newcommand{\deltr}{\delta_\labrm}
\newcommand{\deltt}{\delta_\labtm}

\newcommand{\xir}{\xi_\labrm}
\newcommand{\xit}{\xi_\labtm}

\newcommand{\netl}{\labl-network\xspace}
\newcommand{\netr}{\labs-network\xspace}
\newcommand{\netm}{\labp-network\xspace}
\newcommand{\nett}{\labh-network\xspace}

\newcommand{\netls}{\labl-networks\xspace}
\newcommand{\netrs}{\labs-networks\xspace}
\newcommand{\netms}{\labp-networks\xspace}
\newcommand{\netts}{\labh-networks\xspace}

\newcommand{\cycr}{\labr-cycle\xspace}

\newcommand{\cyl}[2]{\kZ_{#1,#2}}
 % example of author macro
 % another example of author macro

\hypersetup{
    pdfauthor   = {},%
    pdftitle    = {}}

\begin{document}
\normalem
\title{\bf Random cubic planar graphs converge to\\ the Brownian sphere}
\author{\textsc{Marie Albenque}\footnote{LIX/CNRS, UMR7161, École Polytechnique, \url{albenque@lix.polytechnique.fr}}\, \qquad \textsc{Éric Fusy}\footnote{LIGM/CNRS, UMR8049, Université Gustave Eiffel, \url{eric.fusy@u-pem.fr}}\, \qquad  \textsc{Thomas Lehéricy}\footnote{Institut für Mathematik, Universität Zürich, \url{thomas.lehericy@math.uzh.ch}}}
\date{}
\maketitle

\begin{abstract}
In this paper, the scaling limit of random connected cubic planar graphs (respectively multigraphs) is shown to be the Brownian sphere. 

The proof consists in essentially two main steps. First, thanks to the known decomposition of cubic planar graphs into their 3-connected components, the metric structure of a random cubic planar graph is shown to be well approximated by its unique 3-connected component of linear size, with modified distances. 

Then, Whitney's theorem ensures that a 3-connected cubic planar graph is the dual of a simple triangulation, for which it is known that the scaling limit is the Brownian sphere. %In~\cite{CurienJFLG}, 
Curien and Le Gall have recently developed a framework to study the modification of distances in general triangulations and in their dual. By extending this framework to simple triangulations, it is shown that 3-connected cubic planar graphs with modified distances converge jointly with their dual triangulation to the Brownian sphere.
\end{abstract}

\bigskip

% \noindent{\bf MSC 2010 Classification:} Des codes ...\\
% \noindent{\bf Keywords:} Des mots ...

% \bigskip

%%%----------------------------------------------------------------------------------------------------------

\tableofcontents
\newpage

\section{Introduction}
In recent years, a lot of progress  has been achieved in the understanding of the scaling limit of random planar maps (planar graphs embedded in the sphere). Miermont~\cite{Miebm} and Le Gall~\cite{LGbm} established the first results of convergence of random planar maps towards the Brownian sphere. Since these major results, many other families of maps have also been proved to admit the Brownian sphere as their scaling limit. 
% The first results of convergence towards the Bo It is now known that many families of planar maps admit the Brownian sphere as their scaling limit~\cite{Miebm,LGbm}. 
Moreover, the properties of the Brownian sphere have been thoroughly investigated, we refer to the following surveys and references therein~\cite{LGsurvey,MieSF,MillerSurvey} for nice entry points to this field.

However, much less is known about the scaling limit of random planar \emph{graphs}, which are graphs that can be embedded in the sphere but for which the embedding is not fixed.
In this article, we obtain the first result of convergence of a family of random planar graphs towards the Brownian sphere. 
Our main result is the following (a graph or multigraph is called \emph{cubic} if all its vertices have degree $3$):
\index{e@\textbf{maps and graphs}!dgr@\ttt{$\dgr$}{graph distance}}

\index{f@\textbf{vertex-labeled cubic planar graphs} (replace $C$ by $M$ for multigraphs)!random@\ttt{$\cn$}{uniform in $\kC_n$}}
\begin{theorem}\label{thm:main}
Let $\cn$ (resp. $\mn$) be a uniformly random connected cubic labeled planar graph (resp. multigraph) with $2n$ vertices. Let $\mu_{\cn}$ and $\mu_{\mn}$ denote respectively the uniform distribution on $V(\cn)$ and $V(\mn)$. Let $\dgr$ denote the graph distance. Then, there exist constants $\mathrm{c},\mathrm{c}_m\in(0,\infty)$ such that the following convergence holds:
\begin{equation}
\left(V(\cn),\frac{1}{\mathrm{c}\cdot n^{1/4}}\dgr,\mu_{\cn}\right)\xrightarrow[n\to\infty]{} \map_\infty,
\end{equation}
and
\begin{equation}
\left(V(\mn),\frac{1}{\mathrm{c}_m\cdot n^{1/4}}\dgr,\mu_{\mn}\right)\xrightarrow[n\to\infty]{} \map_\infty,
\end{equation}
where the convergence holds in distribution for the Gromov--Hausdorff--Prokhorov topology for measured metric spaces, and $\map_\infty$ denotes the Brownian sphere. 
%\footnote{All these notions and the following ones used in the introduction will be defined in more details in Section~\ref{sec:preliminaries}.}.
\end{theorem}
While finishing this project, we became aware that Benedikt Stufler obtained simultaneously and independently the scaling limit of random connected cubic planar graphs~\cite{bs22fpp,bs22scaling}, using the same general strategy but with different techniques to resolve the main difficulties.

Our motivation to consider not only cubic planar graphs but also cubic planar multigraphs comes from the fact that they appear naturally as the kernel of random connected planar graphs in the sparse regime (i.e. when the excess of the graph is $o(n)$). The scaling limit of random planar \emph{maps} in the sparse regime has recently been investigated in~\cite{curien2021mesoscopic}, via the study of the scaling limit of their cubic kernel. To extend this result to the case of random sparse planar graphs, the scaling limit of random connected cubic planar multigraphs is thus a decisive step\footnote{A simpler problem involving kernel extraction would be to obtain the scaling limit of random connected planar multigraphs
that are \emph{precubic}, i.e., with vertex-degrees in $\{1,3\}$.}.

\bigskip

Before giving the main ideas behind the proof of this result, let us place it in the context of the existing literature on the enumeration of planar graphs and on the study of their random properties. Unlike planar maps, for which closed enumerative formulas are often available~\cite{tutte1962census}, the enumeration of planar graphs is more involved. Building on  a canonical decomposition of graphs into their 3-connected components~\cite{tutte2019connectivity}, a (differential) system of equations for their generating series can be obtained. Major achievements have consisted in deriving precise asymptotic estimates from these equations. A non-exhaustive list of contributions includes the case of 2-connected planar graphs~\cite{bender2002number}, the case of cubic planar graphs~\cite{BKLM} and cubic planar multigraphs~\cite{kang2012two} and the case of planar graphs~\cite{gimenez2009asymptotic}. We also refer to the survey~\cite{noy2014random} and references therein.

These enumerative results have made possible the study of random properties of planar graphs, and to establish limit laws for various parameters, such as the maximum degree~\cite{drmota2014maximum}. They have also been a key ingredient in the recent proofs of the convergence of random cubic planar graphs \cite{stufler2022uniform} and random planar graphs \cite{stufler2021local} for the local topology. 

However, only few results are available about the metric properties of random planar graphs. It has only been shown that the order of magnitude of the diameter of random planar graphs with $n$ vertices is concentrated around $n^{1/4}$~\cite{chapuy2015diameter}. Let us also mention that the scaling limit of some families of planar graphs have been previously established. However, such results only hold for so-called \emph{subcritical} families of graphs (which include in particular outerplanar graphs and series-parallel graphs) and their scaling limit is Aldous' Brownian Continuum Tree~\cite{panagiotou2016scaling,SurveyStufler}.

\bigskip

\subsection*{General strategy of the proof and organization of the paper}

\paragraph{Decomposition of cubic planar graphs along their 3-connected components.}
The first ingredient in the proof of Theorem~\ref{thm:main} is the known decomposition of cubic planar graphs into 3-connected components. This decomposition allows us to use techniques similar to the ones developed in~\cite{banderier2001random,gao1999size} and to prove that a random cubic planar graph (resp. cubic planar multigraph) admits a unique giant 3-connected component called its \emph{3-connected core}. This is presented in Section~\ref{sec:decompo}. A key feature is that the metric properties of the original graph can be very precisely approximated by the metric properties of its 3-connected core with random edge-lengths. More precisely, the length of a given edge follows asymptotically a distribution denoted $\nustar$. 
% To study this model, a first step is to consider the case where the edge-lengths are i.i.d random variables distributed according to $\nustar$. 
To study this model, a first step is to introduce and study the distribution $\nustar$, this is undertaken in Section~\ref{sec:bounds}.

Dealing with a 3-connected planar graph allows us to enter the world of planar maps. Indeed, by Whitney's theorem~\cite{whitney19922}, a 3-connected planar graph is known to admit a unique embedding (up to mirror). Hence, 3-connected planar graphs and 3-connected planar maps are the same objects. By duality, 3-connected cubic planar graphs correspond to simple triangulations (i.e. triangulations with no loops nor multiple edges). 

\paragraph{Modification of distances in simple triangulations.}
In~\cite{CurienJFLG}, Curien and Le Gall studied modifications of distances in triangulations. Their techniques have also been adapted to quadrangulations and general maps by the third author~\cite{Lehfpp}, and to Eulerian triangulations~\cite{carrance2021convergence}. To establish the scaling limit of random 3-connected cubic planar maps with random i.i.d edge-lengths, we extend their framework in two directions. We deal with simple triangulations rather than general triangulations, and we consider the \emph{general} first-passage percolation distance on the edges of the dual map rather than the dual graph distance or the Eden model (corresponding to exponential random variables on the edges of the dual map). 
To be able to state our result precisely, we first introduce a couple of definitions and notations.

\index{l@\textbf{modification of distances}!distr@\ttt{$\nu$}{weight distribution on $[\eta_0,\infty)$ for $(w_e)_{e\in E(\map^\dagger)}$}}
\index{l@\textbf{modification of distances}!fpp@\ttt{$\dstar_\nu = \dstar$}{first-passage percolation distance on $\map^\dagger$}}
Fix $\mathrm m$ a planar map, and write $\mathrm m^{\dagger}$ for its dual. Let $\nu$ be a probability distribution on $\R_{>0}$. We define a (random) metric on $\mathrm m^{\dagger}$ as follows. Let  $(w_e)_{e\in E(\mathrm m^{\dagger})}$ be a collection of random weights indexed by the set of edges of $\mathrm m^{\dagger}$, such that $(w_e)$ are i.i.d random variables sampled from $\nu$. We define the first-passage percolation distance $\dstar_\nu$ on $\mathrm m^\dagger$ by setting, for any $u,v \in V(\mathrm m^{\dagger})$:
\begin{equation}
\dstar_\nu(u,v)=\inf_{\gamma: u\rightarrow v}\sum_{e\in \gamma}w_e,
\end{equation}
where the infimum runs over all paths $\gamma$ going from $u$ to $v$ in $\mathrm m^{\dagger}$. 
\medskip

A key ingredient in the proof of Theorem~\ref{thm:main} is the following result, which is of independent interest: 
\begin{theorem}\label{th:LGC_simple}
Let $\tn$ be a uniformly random simple triangulation with $n+2$ vertices. Let $\mu_{\ft_n}$ and $\mu_{\ft_n^\dagger}$ denote respectively the uniform distribution on $V(\tn)$ and on $V(\tn^\dagger)$. Assume that the support of $\nu$ is included in $[\eta_0,\infty)$ for $\eta_0>0$, and that $\nu$ has exponential tails.

Then, there exists a constant $c_\nu>0$ such that the following convergence holds jointly in distribution for the Gromov--Hausdorff--Prokhorov topology: 
\begin{equation}
\left(\left(V(\tn),\Big(\frac{3}{4n}\Big)^{1/4}\dgr,\mu_{\ft_n}\right),\left(V(\tn^\dagger),\Big(\frac{3}{4n}\Big)^{1/4}\frac1{c_\nu}\cdot \dstar_\nu,\mu_{\ft_n^\dagger}\right)\right)\xrightarrow[n\rightarrow \infty]{(d)}(\map_{\infty},\map_{\infty}),
\end{equation}
where $\map_{\infty}$ is the Brownian sphere.
\end{theorem}
Note that the convergence of the first component was established by Addario-Berry and the first author in~\cite{SimpleTrig}. By applying this theorem to the case where $\nu=\delta_1$ is the Dirac measure on $\{1\}$, and by applying Whitney's theorem, we obtain as an immediate corollary: 
\index{g@\textbf{3-connected cubic planar graphs}!rand@\ttt{$\kn$}{uniform in $\kK_n$}}
\begin{theorem}\label{thm:3connectedScaling}
Let $\cc_n$ be a uniformly random 3-connected cubic planar graphs with $2n$ vertices. Denote by $\mu_{\cc_n}$ the uniform distribution on $V(\cc_n)$. Then, there exists a constant $c_3>0$ such that the following convergence holds in distribution for the Gromov--Hausdorff--Prokhorov topology:
\begin{equation}
\left(V(\cc_n),\frac{1}{c_3 n^{1/4}}\dgr,\mu_{\cc_n}\right)\xrightarrow[n\rightarrow \infty]{(d)}\map_{\infty},
\end{equation}
where $\map_{\infty}$ is the Brownian sphere.
\end{theorem}
When adapting the framework of Curien and Le Gall to our setting, the main difficulty comes from the fact that we consider simple triangulations rather than (unconstrained) triangulations. The article~\cite{CurienJFLG} relies extensively on the so-called \emph{skeleton decomposition} of triangulations, introduced by Krikun in~\cite{Krikun}. However, the skeleton decomposition does not behave as nicely on simple triangulations as on general triangulations. To circumvent this issue, we apply the skeleton decomposition to the model of quasi-simple triangulations (defined in~Section~\ref{sec:quasi_simple}). The very nice property of quasi-simple triangulations is that their skeleton decomposition can be encoded via exactly the same branching process as general triangulations. Therefore, many results of~\cite{CurienJFLG} extend to our setting effortlessly. This comes at the price that quasi-simple triangulations are not invariant under re-rooting (its root plays indeed a special role), so that some arguments need to be adapted. This is the purpose of Section~\ref{sec:modDistances}. On the other hand, in his independent preprint~\cite{bs22fpp}, Benedikt Stufler deals directly with a skeleton decomposition for simple triangulations, where some forbidden configurations have to be handled.

\paragraph{From i.i.d edge weights to the scaling limit of random cubic planar graphs.}
Given Theorem~\ref{th:LGC_simple}, there remain two last steps to prove Theorem~\ref{thm:main}. The first one deals with the metric property: indeed, we need to relax the hypothesis that the edge lengths are independent. The second one deals with the measure on the vertices. Roughly speaking, we have to prove that the uniform measure on the vertices of a random connected cubic planar graph on one hand, and on the vertices of its 3-connected core on the other hand, do not differ asymptotically. 

More precisely, to prove Theorem~\ref{thm:main}, we establish in fact the following joint convergence result: 
\index{g@\textbf{3-connected cubic planar graphs}!zcore@\ttt{$K(\mg)$}{3-connected core of $\mg$}}
\begin{theorem}\label{th:convJointe}
Let $\cn$ be a uniformly random connected cubic labeled planar graph (resp. multigraph) with $2n$ vertices. Write $K(\cn)$ for its 3-connected core and $\Delta(\cn)$ for the (unlabeled, rooted) dual of $K(\cn)$. We denote respectively by $\mu_{\cn}$, $\mu_{K(\cn)}$ and $\mu_{\Delta(\cn)}$ the uniform measures on $V(\cn)$, $V(K(\cn))$ and $V(\Delta(\cn))$. Moreover, we write $\mathrm{dist}_{\cn}$ for the distance on $K(\cn)$ induced by the graph distance on $\cn$.
\index{g@\textbf{3-connected cubic planar graphs}!zdcore@\ttt{$\mathrm{dist}_{\mg}$}{distance on $K(\mg)$ induced by the graph distance on $\mg$}}

Then, there exist two positive constants $c_1$ and $c_2$ (with $c_2$ explicit) such that:
\begin{multline}
\left(\left(\cn,\frac{1}{c_1 n^{1/4}}\dgr,\mu_{\cn}\right),
\left(K(\cn),\frac{1}{c_1 n^{1/4}}\mathrm{dist}_{\cn},\mu_{K(\cn)}\right),
\left(\Delta(\cn),\frac{1}{c_2 n^{1/4}}\dgr,\mu_{\Delta(\cn)}\right)
\right)\\
\xrightarrow[n\rightarrow \infty]{(d)}(\map_{\infty},\map_{\infty},\map_{\infty}),
\end{multline}
where $\map_{\infty}$ is the Brownian sphere.
Moreover, a similar result holds when replacing $\cn$ by a random connected cubic planar multigraph $\mn$.
\end{theorem}
Given Theorem~\ref{th:LGC_simple}, to establish the joint convergence of the last two coordinates, we construct in Section~\ref{sec:two_point_dep} an explicit coupling between the models with independent edge-lengths (sampled from $\nustar$) and $\left(K(\cn),\mathrm{dist}_{\cn}\right)$. With this coupling, we prove in Theorem~\ref{th:scaling3connex} that the Gromov--Hausdorff distance between both models tends to 0 in probability.

Finally, in Section~\ref{sec:projection}, to establish the joint convergence of the first two coordinates, we rely on the  methodology developed by Addario-Berry and Wen  in~\cite{addario2017joint}. It involves an explicit projection from the vertices of $\cn$ to the vertices of its $3$-connected core $K(\cn)$. The key points are the property that each vertex is projected to a vertex at distance $o(n^{1/4})$ with high probability, and the property that the projection of $\mu_{\cn}$ is asymptotically close to $\mu_{K(\cn)}$ (in the sense of the L\'evy-Prokhorov distance).
\medskip

Let us finish by mentioning that an extensive index of notations concludes the paper.

\subsection*{Acknowledgments}
M.A.'s research was supported by the ANR grant GATO (ANR-16-CE40-0009-01) and the ANR grant IsOMa (ANR-21-CE48-0007). E.F. acknowledges the support of the ANR grant GATO (ANR-16-CE40-0009-01) and the ANR grant Combine (ANR-19-CE48-0011).

We are grateful to an anonymous referee, whose careful reading significantly improved the presentation of this paper.

\section{Preliminaries}

\subsection{Definitions on graphs and maps}

% \index{e@\textbf{maps and graphs}!vertex@\ttt{$v\in V(\map)$}{vertex}}
% \index{e@\textbf{maps and graphs}!vertex@\ttt{$f\in F(\map)$}{face}}
% \index{e@\textbf{maps and graphs}!vertex@\ttt{$e\in E(\map)$}{edge}}

In this article, we consider both multigraphs, and graphs (i.e., multigraphs with no loop nor multiple edges). A multigraph is called \emph{vertex-labeled} if its $n$ vertices
carry distinct labels in $[1..n]$, and is called \emph{half-edge-labeled} if its $2m$ half-edges carry distinct labels in $[1..2m]$. When dealing with random cubic planar multigraphs
(resp. graphs) we will usually consider that they are half-edge-labeled (resp. vertex-labeled). A connected multigraph is \emph{rooted} if it comes with a distinguished oriented edge called
its \emph{root edge}. The tail vertex of the root edge is called the \emph{root vertex}. 

Note that a connected graph or multigraph can be seen as a discrete metric space, where the distance between two vertices is the minimal length over the paths connecting them.
 More generally, we will consider so-called \emph{metric graphs}, i.e., graphs where every edge $e$ carries a length \index{e@\textbf{maps and graphs}!length!edge@\ttt{$\ell(e)$}{of an edge }}$\ell(e)\in\R_+^*$. 
 In such a graph seen as a metric space, the \index{e@\textbf{maps and graphs}!length!path@\ttt{$L(\gamma)$}{of a path}}length of a path $\gamma=(e_1,\ldots,e_m)$ 
 is defined as $L(\gamma)=\ell(e_1)+\cdots +\ell(e_m)$, and the distance between two vertices is the length of a 
 shortest path connecting them (on the other hand, in the context of metric graphs, we will use the notation $|\gamma|$ for the number of edges in $\gamma$).

A \emph{planar map} (shortly, a map) $\map$ is a connected multigraph properly embedded on the sphere, up to orientation-preserving homeomorphism. A \emph{face} of $\map$ 
is a connected component of the sphere cut by the embedding, a \emph{corner} of $\map$ is a sector between two consecutive edges around a vertex (a corner thus lies in a unique face of $\map$).  
\index{e@\textbf{maps and graphs}!degree@\ttt{$\deg_\map(v), \deg_\map(f)$}{degree}}
The \emph{degree} of a vertex (resp. face) of $\map$ is the number of  corners that are incident to it. We write $E(\map), V(\map)$ and $F(\map)$ for the set of edges, vertices and faces of $\map$ respectively.

A \emph{rooted map} is a map that comes
with a distinguished oriented edge called its \emph{root edge}, i.e., the underlying multigraph is rooted.  
The face on the right of the root edge is called the \emph{root face} 
(also called the \emph{outer face}, since it is usually taken as the unbounded face
when the embedding of the map is represented on the plane), and the corner incident to the root vertex just after the root edge in clockwise order is called the \emph{root corner}.  
Similarly, vertices incident to the root face are called \emph{outer vertices}, and \emph{inner} vertices otherwise.

In general, maps are considered as unlabeled (whether on vertices or on half-edges).  
Indeed, in contrast to the situation for multigraphs, the rooting operation is sufficient to suppress the possibility of having non-trivial automorphisms. 
A map is \emph{simple} if the underlying multigraph is a graph, i.e. if there is no double edge and no loop. 
\medskip

\index{e@\textbf{maps and graphs}!dual@\ttt{$\map^\dagger$}{dual map}}
For  $\map$ a map, the \emph{dual map} $\map^\dagger$ is constructed as follows (see Figure~\ref{Fig_illustration_dual}): 
we add a ``type F'' vertex in every face of $\map$. Then for every edge $e$ of $\map$, we draw an edge $e^\dagger$ (the \emph{dual edge} of $e$) between the type F vertices of the two faces that are incident to $e$, in such a way that $e^\dagger$ crosses $e$ and no other edge (including the new ones) --- if both sides of $e$ are incident to the same face, then $e^\dagger$ is a loop. Then $\map^\dagger$ has vertex set the set of type F vertices, and edge set the set of dual edges. If $\map$ is rooted, the root edge of  
$\map^\dagger$ is taken as  the dual of the root edge $e$ of $\map$, oriented in such a way that it crosses $e$ in the counterclockwise direction around the root vertex. 

\begin{figure}[t!]
\begin{center}
\includegraphics[width=11.2cm]{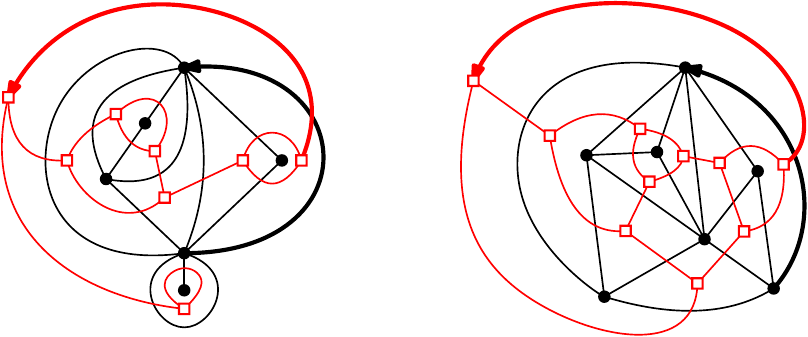}
\end{center}
\caption{Two examples of triangulations superimposed with their dual. The one on the left is non-simple while the one on the right is simple. The dual cubic map is $3$-connected only
for the example on the right.}
\label{Fig_illustration_dual}
\end{figure}

One can see from the definition that $(\map^\dagger)^\dagger$ is the map obtained from $\map$ by reversing the direction of its root edge. Furthermore, the one-to-one correspondence  between the faces of $\map$ and the vertices of $\map^\dagger$ preserves the degree: if $f$ is a face of $\map$, and $f^\dagger$ is the associated vertex of $\map^\dagger$, then $\deg_\map(f) = \deg_{\map^\dagger}(f^\dagger)$. The same holds for vertices of $\map$: every face of $\map^\dagger$ contains a unique vertex of $\map$, and if $v$ is a vertex of $\map$ and $v^\dagger$ is the face of $\map^\dagger$ that contains it, then $\deg_\map(v) = \deg_{\map^\dagger}(v^\dagger)$.

\index{e@\textbf{maps and graphs}!size@\ttt{$\# E/3 = \# V/2 = \# F-2$}{size of a planar cubic (multi)graph or map}}
A multigraph or map is called \emph{cubic} if every vertex has degree $3$. The \emph{size} of such a multigraph or map is the integer $n\geq 1$ such that 
the number of vertices is $2n$ and the number of edges is $3n$. By Euler's formula, a cubic map of size $n$ has $n+2$ faces. 
On the other hand, a \emph{triangulation} is a map where all faces have degree $3$.  
Since taking the dual of a map sends vertices of the primal to faces of the dual and vice-versa, it follows that the dual of triangulations are cubic maps; and conversely, the dual of cubic maps are triangulations. A triangulation with $n+2$ vertices thus has $2n$ faces and $3n$ edges. 

A multigraph is called \emph{$3$-connected} it is simple, has at least $4$ vertices, and at least $3$ vertices need to be deleted to disconnect it. 
 By Whitney's theorem~\cite{whitney19922}, such a graph has exactly two embeddings on the sphere, which differ by a mirror.  
A \emph{$3$-connected map} is a map whose underlying graph is $3$-connected. It is known that duality preserves $3$-connectivity~\cite{mullin1968enumeration}, 
and that a triangulation is $3$-connected if and only if it is simple. 
 
 \medskip 
  
 \begin{remark}\label{rk:3conn}
 For a 3-connected graph $\mapk$ that is vertex-labeled and rooted, we can canonically choose one of the two embeddings $R,R'$ (which differ by a mirror) as follows. 
 In $R$ we let $f_1,f_2$ be the two faces incident to the root edge~$e$.  
Let $v_0$ be the root-vertex, and let $v_1$ (resp. $v_2$) be the unique vertex of $f_1$ (resp. $f_2$) adjacent to $v_0$ and not incident to the root edge. 
By 3-connectivity, we have $v_1\neq v_2$. If the label of $v_1$ is smaller than the label of $v_2$, then we take $R$ as the embedding associated to $\mapk$, otherwise we take $R'$. 
The  \emph{outer edges} of $\mathrm{k}$ are those incident to the outer face in the canonical embedding.  
Note that each $3$-connected (unlabeled) rooted map with $n$ vertices has $n!/2$ preimages under this correspondence.   \dotfill
\end{remark}

\subsection{Gromov--Hausdorff--Prokhorov topology}\label{sec:defGHP}

We now define the Gromov--Hausdorff--Prokhorov distance. For details and proofs, we refer to \cite{burago2001course} and \cite[Section 6]{miermont2009tessellations}.  

Let $(\kX,\d)$ be a metric space. For every $K \subset \kX$ and every $\varepsilon>0$ let $K^\varepsilon=\{x\in\kX: \d(x,K)<\varepsilon\}$. The Hausdorff distance between two compact sets $K_1$ and $K_2$ of $\kX$ is
\begin{equation*}
\mathrm{d}^\kX_{\mathrm{H}}(K_1, K_2) \defeq \inf \{ \varepsilon>0 \ : \ K_1 \subset K_2^\varepsilon \text{ and } K_2 \subset K_1^\varepsilon \} .
\end{equation*}
The Gromov--Hausdorff distance allows us to compare two compact metric spaces by isometrically embedding them in a common, larger metric space, and comparing the Hausdorff distance of their embedding. More formally for every pair of compact metric spaces $(\kX,\d^\kX)$ and $(\kY,\d^\kY)$, we define their Gromov--Hausdorff distance as:\index{d@\textbf{distances}!GH@\ttt{$\dGH$}{Gromov--Hausdorff}}
\begin{equation*}
\dGH((\kX,\d^\kX),(\kY,\d^\kY)) \defeq \inf \{ \mathrm{d}^\kZ_{\mathrm{H}}(\Phi^\kX(\kX), \Phi^\kY(\kY)) \} , 
\end{equation*}
where the infimum is over every metric space $\kZ$ and isometries $\Phi^\kX: \kX \to \kZ$ and $\Phi^\kY: \kY\to\kZ$. Since isometric spaces are at Gromov--Hausdorff ``distance'' zero, $\dGH$ only defines a pseudo-distance on compact metric spaces, but it becomes a true distance when considered on $\M$ the set of compact metric spaces seen up to isometry. Furthermore, $(\M,\dGH)$ is a Polish space. %There exist equivalent descriptions of $\dGH$; see e.g. \cite[Section 3.3]{miermont2014aspects}.
\bigskip

If $(\kZ,\d)$ is a metric space with Borel $\sigma$-algebra $\kB(\kZ)$, we define the Lévy--Prokhorov distance between two Borel probability measures $\mu,\mu'$ on $\kZ$ as follows: \index{d@\textbf{distances}!LP@\ttt{$\dProk(\mu,\mu')$}{Lévy--Prokhorov}}
\begin{equation*}
\dProk^\kZ(\mu,\mu') \defeq \inf \{ \varepsilon>0 \ : \ \forall A \in \kB(\kZ), \mu(A)\leq \nu(A^\varepsilon)+\varepsilon \text{ and } \nu(A) \leq \mu(A^\varepsilon)+\varepsilon \} .
\end{equation*}
If $\kZ$ is a Polish space then the Lévy--Prokhorov distance makes the set of probability measures on $\kZ$ a Polish space, and $\dProk$ metrizes the weak convergence of measures. There is also a characterization of $\dProk^\kZ(\mu,\mu')$ in terms of coupling, see for instance~\cite{GPW}.  A \emph{coupling} between $\mu$ and $\mu'$ is a probability measure $\nu$ on $\kZ\times \kZ$ such that $\pi^1_* \nu = \mu$ and $\pi^2_* \nu = \mu'$ (where $\pi^1$ and $\pi^2$ denote the canonical projections with respect to the first and second coordinate respectively). We denote by $\kM(\mu,\mu')$ the set of couplings between $\mu$ and $\mu'$. \index{d@\textbf{distances}!coupling@\ttt{$\kM(\mu,\mu')$}{couplings}}
  Then, if $(\kZ\times\kZ)_{\eps}$ denotes the set $\{(z,z')\in\kZ\times\kZ\, ,\, \mathrm{d}(z,z')\leq\eps\}$, we have $\dProk^\kZ(\mu,\mu')\leq\eps$ if and only if there exists a coupling $\nu\in \kM(\mu,\mu')$ such that $\nu((\kZ\times\kZ)_{\eps})\geq 1-\eps$. 
\bigskip

A weighted metric space is a metric space equipped with a probability Borel measure. 
The Gromov--Hausdorff--Prokhorov distance $\dGHP$ is a metric on isometry classes of compact weighted metric spaces, defined as follows. 
For any measured spaces $\kX, \kZ$, measurable function $\Phi: \kX \to \kZ$ and measure $\mu$ on $\kX$, write $\Phi_* \mu = \mu(\Phi^{-1}(\cdot))$ for the pushforward of $\mu$ under $\Phi$. 
Let $(\kX, \d^\kX)$ and $(\kY,\mathrm{d}^\kY)$ be two compact metric spaces equipped respectively with the probability measures $\mu^\kX$ and $\mu^\kY$; define \index{d@\textbf{distances}!\ttt{$\dGHP$}{Gromov--Hausdorff--Prokhorov}}
\begin{equation*}
\dGHP((\kX,\d^\kX, \mu^\kX) \, , \, (\kY, \d^\kY, \mu^\kY)) 
\defeq \inf\left\{ \mathrm{d}^\kZ_{\mathrm{H}}(\Phi^\kX(\kX), \Phi^\kY(\kY)) 
\ + \ \dProk^\kZ(\Phi^\kX_*\mu^\kX, \Phi^\kY_* \mu^\kY) \right\} ,
\end{equation*}
where (as in the definition of $\dGH$) the infimum is over every metric space $(\kZ,\d^\kZ)$ and isometries $\Phi^\kX: \kX \to \kZ$ and $\Phi^\kY: \kY \to \kZ$. Because isometric weighted metric spaces are at Gromov--Hausdorff--Prokhorov ``distance'' zero, the function $\dGHP$ only defines a pseudo-metric; to make it a true metric, we consider $\K$ the set of isometry classes of compact weighted metric spaces. Then $\dGHP$ defines a true metric on $\K$, and $(\K, \dGHP)$ is a Polish space.

There exists also an equivalent description of $\dGHP$ based on couplings which is better suited for our purposes. Let us introduce the concept of correspondence. A \emph{correspondence} between two sets $\kX,\kX'$ is a subset $\kR \subset \kX\times \kX'$ such that $\pi^\kX (\kR) = \kX$ and $\pi^{\kX'} (\kR) = \kX'$, where $\pi^\kX$, resp. $\pi^{\kX'}$ is the canonical projection on $\kX$, resp. $\kX'$. We write $\kC(\kX,\kX')$ for the set of correspondences between $\kX$ and $\kX'$. Given a correspondence, define its \emph{distortion}\index{d@\textbf{distances}!distortion@\ttt{$\dis(\kR)$}{distortion}}
\begin{equation*}
\dis(\kR) \defeq \sup \left\{ \left| \d^\kX(x,y) - \d^{\kX'}(x',y') \right| \ : \ (x,y), (x',y') \in \kR \right\} .
\end{equation*}
The Gromov--Hausdorff distance can be expressed in terms of distortion:
\begin{equation*}
\dGH((\kX,\d^\kX),(\kY,\d^\kY)) = \frac{1}{2} \inf \{ \dis(\kR) : \kR \in \kC(\kX,\kY) \} ,
\end{equation*}
and this infimum is attained.

Let $\mathbf{X} = (\kX,\d^\kX,\mu^\kX)$ and $\mathbf{Y} = (\kY,\d^\kY,\mu^\kY)$ be two compact weighted metric spaces. Recall the notation $\kM(\mu^\kX,\mu^\kY)$ for the set of couplings between $\mu^\kX$ and $\mu^\kY$, then we have the following alternate characterization of the Gromov--Hausdorff--Prokhorov distance:
% A \emph{coupling} between $\mu^\kX$ and $\mu^\kY$ is a probability measure $\nu$ on $\kX\times \kY$ such that $\pi^\kX_* \nu = \mu^\kX$ and $\pi^\kY_* \nu = \mu^\kY$; we denote by $\kM(\mu^\kX,\mu^\kY)$ the set of couplings between $\mu^\kX$ and $\mu^\kY$ (and omit the dependence in $\kX$ and $\kY$ for the sake of readability). Then 
\begin{equation*}
\dGHP(\mathbf{X}, \mathbf{Y}) = \inf\{\ve>0 \ : \ \exists \kR\in\kC(\kX,\kY), \exists \nu\in\kM(\mu^\kX,\mu^\kY), \dis(\kR) \leq 2\ve , \nu(\kR) \geq 1-\ve \} .
\end{equation*}

\section{The 3-connected core of cubic planar graphs}\label{sec:decompo}

\subsection{Decomposition into 3-connected components}

\index{h@\textbf{cubic networks}|(}

We recall here a decomposition~\cite{BKLM,kang2012two,noy2020further} of so-called cubic networks (closely related to rooted cubic planar graphs) and of cubic planar (multi-)graphs
into 3-connected components.

\index{h@\textbf{cubic networks}!poles@\ttt{$\vm,\vp$}{poles}}
A \emph{cubic network} is a connected planar multigraph $\network$ with two marked vertices $\vm,\vp$ called the \emph{poles}, such that the poles have degree $1$ and the other vertices have degree $3$. Note that the number of non-pole vertices has to be a positive even number $2n$, then $n\geq 0$ is called the \emph{size} of $\network$. The two edges adjacent to the poles are called the \emph{legs} of $\network$, and the other edges are called \emph{plain}. The \emph{trivial network} is the one of size $0$, with a single edge connecting the two poles. 
Note that a cubic network $\network$ of positive size identifies to a rooted cubic planar multigraph $\oN$, which is obtained by merging the two poles, smoothing out the merged vertex, and orienting the resulting edge from the neighbor of $\vm$ to the neighbor of $\vp$. Then $\network$ is called \emph{polyhedral} if $\oN$ is 3-connected, in which 
case the \emph{outer edges} of $\network$ are the (plain) edges of $\network$ corresponding to the outer edges (without including the root edge) of $\oN$.   
We also define the \emph{double-edge network} as the cubic network of size $1$ where the  neighbors of the poles are connected by a double edge. 

\index{h@\textbf{cubic networks}!add@\ttt{$N_1+N_2$}{\netr}}
Given two non-trivial cubic networks $\network_1,\network_2$, we let $\network:=\network_1+\network_2$ be the cubic network obtained by merging the plus-pole of $\network_1$ with the minus-pole of $\network_2$ and smoothing out the merged vertex, so that the plus-pole of $\network$ is the plus-pole of $\network_2$, and the minus-pole of $\network$ is the minus-pole of $\network_1$. A cubic network $\network$ that can be obtained as $\network=\network_1+\network_2$ is called an \emph{\netr}, in which case $\network$ has a unique decomposition as $\network=\network_1+\network_2$ where $\network_1$ is a (non-trivial) non-\netr, and by unfolding it also has a unique decomposition as $\network=\network_1+\cdots+\network_k$ where each component $\network_i$ is a non-trivial non-\netr.

In this article, \emph{edge-substitution} in a graph $G$ is the operation consisting, for each edge $e=(u_-,u_+)$ of $G$ (canonically endowed with a minus-extremity and a plus-extremity), of substituting it by a cubic network $\network_e$, so that the plus-pole and minus-pole of $\network_e$ are identified respectively with $u_-$ and $u_+$ (note that substitution by the trivial network 
leaves the edge unchanged). If $G$ is itself a cubic network, only the plain edges are substituted. \index{h@\textbf{cubic networks}!netm@\netm (substitution from double-edge)}\index{h@\textbf{cubic networks}!nett@\nett (substitution from polyhedral)}
A cubic network obtained by edge-substitution from the double-edge network (resp. from a polyhedral network) is called an \emph{\netm} (resp. a \emph{\nett}). 

Note that \netrs, \netms and \netts are such that the two poles are at distance greater than $2$. \index{h@\textbf{cubic networks}!\netl}
 An \emph{\netl} is a cubic network $\network$  whose poles are at distance $2$. In an \netl, let $u$ be the common neighbor of the poles, and let $v$ be its other neighbor (note that the edge $\{u,v\}$ is an isthmus).    
Let $\mathrm{l}_1$ be the \netl of size $1$ where $v$ carries a loop. And let $\mathrm{l}_2$ be the \netl of size $2$ where $v$ has two neighbors (apart from $u$) each carrying a loop.
As shown in~\cite{BKLM,kang2012two}, an \netl $\network$ is either obtained from $\mathrm{l}_1$ where only the loop-edge is allowed to be substituted, or is obtained from $\mathrm{l}_2$ where only the two 
loop-edges are allowed to be substituted (the first and second case correspond respectively to $N\backslash v$ having two or three connected components). The two cases are called
 type 1 and type 2, respectively. 

We have the following decomposition result for cubic networks, shown in \cite[Lem.1]{kang2012two} (the version for graphs appeared previously in~\cite[Lem.1]{BKLM}):

\begin{lemma}\label{lem:dec_network}
Any non-trivial cubic network is in exactly one of the following classes: $\{$\netl, \netr, \netm, \nett$\}$.
\end{lemma}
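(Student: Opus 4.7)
The plan is to prove both (i) exclusivity (the four classes \netl, \netr, \netm, \nett are pairwise disjoint) and (ii) exhaustivity (every non-trivial cubic network lies in at least one of them). The overall strategy reduces the problem to the classical decomposition of 2-connected multigraphs into 3-connected components, specialized to the cubic planar setting as developed in~\cite{BKLM,kang2012two}.

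For exclusivity, I separate the classes by structural invariants. \netls are characterized by having their poles at distance exactly 2, whereas \netrs, \netms, and \netts all have poles at distance $\geq 3$: for \netrs this follows from the fact that each non-trivial summand of $\network_1+\network_2$ already contributes a path of length $\geq 2$ between its own poles, and the smoothing at the merge point adds the two contributions minus the single smoothed edge, yielding distance $\geq 3$; for \netms the double-edge base has poles at distance $3$ and edge-substitution by non-trivial networks is distance non-decreasing; for \netts the 3-connected polyhedral base forces a non-root path of length $\geq 2$ between the pole-neighbors by simplicity of 3-connected graphs, yielding distance $\geq 4$. Next, \netrs are separated from $\{$\netm, \nett$\}$ because an \netr has an internal cut vertex whose removal disconnects $\vm$ from $\vp$ (the merge point of the series concatenation), a property absent in the other two classes by construction from their respective bases. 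Finally, \netms and \netts are distinguished by whether the base structure exhibits a pair of parallel edges at the level of the 3-connected skeleton: the double-edge network does (contributing a 2-vertex cut isolating a multi-edge), while polyhedral bases forbid multi-edges by simplicity of 3-connectivity.

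For exhaustivity, let $\network$ be a non-trivial cubic network, let $a$ be the neighbor of $\vm$ and $b$ the neighbor of $\vp$. If $a=b$, then the poles are at distance $2$ and $\network$ is an \netl by definition; moreover, the degree-3 constraint at $a$ forces its third edge to go to a vertex $v$, and analysis of the connected components of $\network\setminus v$ identifies $\network$ as arising from either $\mathrm{l}_1$ or $\mathrm{l}_2$ as recalled in the excerpt. If $a\neq b$, I pass to the rooted cubic planar multigraph $\oN$ and apply the classical decomposition of connected multigraphs relative to a distinguished edge into a tree whose components are 3-connected simple graphs, cycles, and bonds (multiple parallel edges). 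Re-opening the pole merge at the root edge of $\oN$, the component containing the root edge maps to the type of $\network$: a bond component realizes $\network$ as an edge-substitution of the double-edge network, giving an \netm; a cycle component produces a series decomposition $\network=\network_1+\cdots+\network_k$, giving an \netr; and a 3-connected component realizes $\network$ as an edge-substitution of a polyhedral network, giving an \nett.

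The main obstacle will be making the translation between the decomposition of $\oN$ and the pole-based structure of $\network$ fully rigorous, in particular correctly threading the pole-merging-and-smoothing operation through the classical theory while accounting for cubic multigraphs that fail to be 2-vertex-connected. A clean self-contained alternative is to proceed by induction on the size $n$ of $\network$: for small $n$ the classification is checked directly; for larger $n$ one either identifies a non-leg bridge separating $\vm$ from $\vp$ (peeling off a series factor), or a 2-edge cut around the root (peeling off a parallel factor via the double-edge base), or concludes that no such reduction exists and $\network$ has a 3-connected polyhedral core. This is essentially the route developed in~\cite{BKLM,kang2012two} for cubic planar graphs and multigraphs, and invoking those results directly completes the proof.
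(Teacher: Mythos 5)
The paper itself does not prove Lemma~\ref{lem:dec_network}: the statement is presented as a known decomposition result cited from~\cite{BKLM,kang2012two}, and your proposal ultimately falls back on that same citation, so the high-level strategy matches. However, the exclusivity invariant you propose to separate \netrs from \netms and \netts is wrong as stated. You claim an \netr is characterized by the existence of an ``internal cut vertex whose removal disconnects $\vm$ from $\vp$ (the merge point of the series concatenation).'' The merge point is not a vertex of $\network$ at all: it is smoothed out when forming $\network_1+\network_2$, leaving an \emph{edge} $(a_1,a_2)$, with $a_1$ the neighbor of $\network_1$'s plus-pole and $a_2$ the neighbor of $\network_2$'s minus-pole. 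More importantly, in \emph{every} non-trivial cubic network the unique neighbor of $\vm$ is a cut vertex whose deletion disconnects $\vm$ from $\vp$ (since $\vm$ has degree $1$), so your criterion holds in every class, \netms and \netts included, and separates nothing. Even restricting to cut vertices that are not pole-neighbors does not help: if $\network=\network_1+\network_2$ with both $\network_i$ \netls, then $a_1$ and $a_2$ coincide with the neighbors of $\vm$ and $\vp$, and no other vertex separates the poles.

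The invariant that actually works is edge-based: an \netr has a plain bridge whose deletion separates $\vm$ from $\vp$ (namely $(a_1,a_2)$), while in an \netl, \netm, or \nett every such deletion leaves the poles connected (for an \netm, via the other parallel component; for a \nett, because the polyhedral base is $3$-edge-connected). Equivalently, and more in line with what the cited references do, one classifies by the type of Tutte block of $\oN$ containing the root edge (cycle, bond, or $3$-connected piece), after first handling loops and bridges of $\oN$, which correspond to the \netl case; uniqueness of the Tutte decomposition yields exclusivity and also makes precise your informal ``$3$-connected skeleton'' criterion for \netm versus \nett. Your exhaustivity sketch (poles at distance $2$ give \netl, otherwise pass to $\oN$) is sound modulo those technicalities, which you rightly flag, and since both you and the paper defer to~\cite{BKLM,kang2012two}, the proposal is acceptable as an outline; but the cut-vertex criterion as written would not survive being made self-contained.
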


\begin{figure}
\begin{center}
\includegraphics[width=12cm]{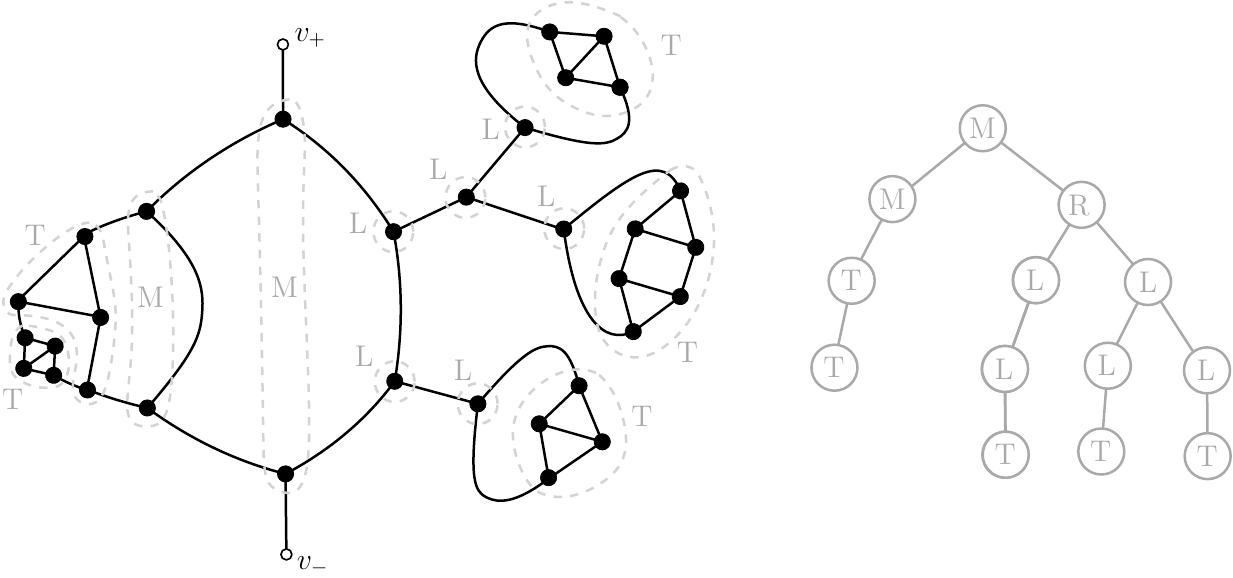}
\end{center}
\caption{Left: a connected (simple) cubic graph $\mg$, realized as a cubic network upon rooting at a (non-isthmus) edge.  
Right: the associated decomposition-tree.\\
The tree is considered as unrooted: it is intrinsic to $\mg$ (i.e., it does not depend on the root-choice),  which is also 
reflected by the difference in the choice of letters (a series-decomposition in the rooted setting corresponds to a ``ring'' of components in the unrooted setting, giving a node of type $R$). 
The left part also shows the partition of the vertices of $\mathrm g$ by components (tree-nodes) of type in $\{\lablm,\labmm,\labtm\}$.}
\label{fig:network}
\end{figure}

\index{h@\textbf{cubic networks}!tree@\ttt{$\tau(N)$}{(rooted) decomposition-tree}}
\index{h@\textbf{cubic networks}!tree@\ttt{$\tau(N)$}{(rooted) decomposition-tree}!L@\ttt{$\lablm$ label}{\netl}}
\index{h@\textbf{cubic networks}!tree@\ttt{$\tau(N)$}{(rooted) decomposition-tree}!R@\ttt{$\labrm$ label}{\netr}}
\index{h@\textbf{cubic networks}!tree@\ttt{$\tau(N)$}{(rooted) decomposition-tree}!M@\ttt{$\labmm$ label}{\netm}}
\index{h@\textbf{cubic networks}!tree@\ttt{$\tau(N)$}{(rooted) decomposition-tree}!T@\ttt{$\labtm$ label}{\nett}}
As detailed in~\cite{noy2020further} and illustrated in Figure~\ref{fig:network}, 
Lemma~\ref{lem:dec_network} yields a \emph{tree-decomposition} of 
 cubic networks, i.e., a rooted (unembedded) tree $\tau(\network)$ can be associated to a non-trivial cubic network $\network$, whose inner nodes have labels in $\{\lablm,\labrm,\labmm,\labtm\}$.  Precisely, if $\network$ is an \netr, of the form $\network_1+\cdots+\network_k$, then the root node of $\tau(\network)$ is labeled \labr and the hanging subtrees are $\tau(\network_1),\ldots,\tau(\network_k)$ (these are not \netr hence do not have root label \labr). 
 If $\network$ is an \netm or \nett, 
 then the root node is labeled \labm (resp. \labt) and the hanging subtrees are the trees associated to the substituted networks. 
Finally, for \netls, the root node has label \labl. In type 1, it has one child  also labeled \labl and of arity in $\{0,1\}$, depending on whether the loop-edge of $\mathrm{l}_1$ is unchanged or substituted by a non-trivial network $\mathrm N'$ (in which case the hanging subtree at the child corresponds to $\tau(\mathrm N')$). In type $2$, it has two children labeled \labl, each corresponding to one of the two loops of $\mathrm{l}_2$. Each child again has arity in $\{0,1\}$, depending on whether the corresponding loop is left unchanged or substituted by a non-trivial network.

%\index{3cgr@3-connected planar graphs!tree@\ttt{$\tau(\mg)$}{(unrooted) decomposition-tree}}%|see{cubnet!tree}}
Each \nodet of the tree $\tau(\network)$ corresponds to a certain rooted 3-connected planar graph $\mapk$ whose vertex-set is a subset of the vertex-set of $\network$. Then $\mapk$ is called
a \emph{3-connected component} of~$\network$.  The hanging subtrees of the node correspond to the (non-trivial) cubic networks substituted at its edges. 
 As detailed in~\cite{noy2020further}, the tree-decomposition and 3-connected components are intrinsic to the underlying unrooted cubic planar graph (essentially it means that 
 rerooting the cubic multigraph at another non-isthmus edge amounts to rerooting the same underlying unrooted tree, a fact established by Tutte~\cite{tutte2019connectivity} for the decomposition of graphs 
 into 3-connected components, which can then be specialized to the cubic case). Accordingly, to any connected cubic planar graph $\mg$, one can associate an unrooted decomposition tree, denoted $\tau(\mg)$.  
 Let us mention the following subtlety: when building the tree in the rooted case, every \noder has arity at least $2$, hence degree at least $3$ if not at the root and degree at least $2$ if at the root. In case the root node is of \typer of arity $2$, then when unrooting the tree the corresponding node of degree $2$ is smoothed out,  so that all \noders in the unrooted tree are of degree at least $3$.   
 The (unrooted) decomposition-tree satisfies the  following properties, which can be visualized in Figure~\ref{fig:network}:

\begin{claim}\label{claim:decomp_tree}
For $\mg$ a connected cubic planar multigraph and $\tau(\mg)$ the associated decomposition-tree, the vertices of $\mg$ are partitioned among the \compls, the \compms, and the \compts (3-connected components) of $\tau(\mg)$ (the \comprs have no contribution). 

The \compls are in one-to-one correspondence with the separating vertices of $\mg$, the \compms are in one-to-one correspondence with vertex-pairs $\{u,v\}$
such that $\mg$ is obtained by edge-substitution in the triple-bond connecting $u$ and $v$. 

The other vertices belong to the 3-connected components.  
Precisely, for $V'$ a vertex-subset of $\mg$ with $|V'|\geq 4$, and $\mathrm{k}$ a  3-connected planar graph (unrooted) with vertex-set $V'$, $\mathrm{k}$ is a 3-connected component of $\mg$ if and only if $\mg$ is $\mathrm{k}$ with networks substituted at the edges. 
\end{claim}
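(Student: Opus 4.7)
The plan is to prove the claim by induction on the structure of the decomposition tree, using Lemma~\ref{lem:dec_network} as the recursive step. First, I would choose any non-isthmus edge of $\mathrm g$ as a root to turn it into a cubic network $\mathrm N$; by the intrinsic character recalled just above the claim, a partition and correspondences established for the rooted tree $\tau(\mathrm N)$ transfer to $\tau(\mathrm g)$ after unrooting.

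The heart of the argument will be to assign each vertex of $\mathrm g$ to a unique inner node of $\tau(\mathrm N)$, following the recursive decomposition. At an R-node $\mathrm N=\mathrm N_1+\cdots+\mathrm N_k$ only the poles at the seams get merged and smoothed, so no vertex of $\mathrm g$ is newly introduced and all vertices descend to the children; this explains why \comprs contribute nothing. At an L-node, the common neighbor $u$ of the two poles in $\mathrm{l}_1$ or $\mathrm{l}_2$ is introduced; by construction $u$ is incident to an isthmus of $\mathrm g$, so removing $u$ disconnects the subnetwork substituted beyond the isthmus from the rest of $\mathrm g$, making $u$ a separating vertex. (In the type~2 case, the middle vertex $v$ of $\mathrm{l}_2$ is handled via the two child L-nodes, each contributing its own $u$-type vertex.) At an M-node, the two non-pole vertices of the underlying double-edge network are introduced: they form a pair $\{u,v\}$ such that $\mathrm g$ is obtained by edge-substitution in the triple bond on $\{u,v\}$. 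At a T-node, the associated 3-connected planar graph $\mathrm k$ contributes exactly those vertices of $V(\mathrm k)$ not identified with poles of substituted children. A routine induction on the size of the network then shows that the union of the contributions is disjoint and equals $V(\mathrm g)$.

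The three correspondences follow from this assignment combined with the uniqueness of the decomposition in each case. For \compls: every separating vertex of $\mathrm g$ produces a 1-cut that, by the uniqueness in Lemma~\ref{lem:dec_network}, sits under exactly one L-node, and conversely each L-node contributes exactly one separating vertex as above. For \compms: a pair $\{u,v\}$ such that $\mathrm g$ factors as an edge-substitution in the triple bond on $\{u,v\}$ is a 2-separation of a very specific shape (removal gives three pieces, each adjacent to both endpoints), which by the lemma is realized precisely at M-nodes. For \compts the ``if and only if'' is the cubic specialization of Tutte's uniqueness of the 3-connected-component decomposition of a multigraph~\cite{tutte2019connectivity}, combined with the direct check that any 3-connected planar graph $\mathrm k$ with $|V(\mathrm k)|\geq 4$ realizing $\mathrm g$ by edge-substitution appears as a T-node of $\tau(\mathrm g)$.

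The hard part will be the bookkeeping around vertices shared between a parent component and its substituted children: when a child network is plugged into an edge, its two poles become identified with the endpoints of that edge, so a given vertex of $\mathrm g$ naively appears in several consecutive components along the tree. The cleanest convention is to assign each vertex to the unique inner node whose associated component contains it as a non-pole vertex, and the delicate step is then to verify this convention is consistent with each of the five cases (R, M, T, L of type~1, L of type~2) of Lemma~\ref{lem:dec_network} and indeed yields a genuine partition of $V(\mathrm g)$.
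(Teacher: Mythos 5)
The paper itself gives no proof of Claim~\ref{claim:decomp_tree}: the statement is presented as a visualizable consequence of the tree-decomposition of~\cite{noy2020further} (together with the intrinsic nature of the decomposition, traced back to Tutte~\cite{tutte2019connectivity}), so there is no ``paper's proof'' to match. Against that backdrop, your plan --- root at a non-isthmus edge, induct along the recursive decomposition of Lemma~\ref{lem:dec_network}, assign each vertex to the unique inner node that introduces it as a non-pole vertex, and transfer back to $\tau(\mathrm g)$ by unrooting --- is the natural way to turn the assertion into an actual argument, and the treatment of R-, M-, and T-nodes is on target.

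The gap is in the L-node case, and you half-acknowledge it. Two concrete problems. First, your accounting for type~2 does not close: $\mathrm l_2$ has non-pole vertices $u,v,w_1,w_2$, you say the root L-node introduces $u$ and each child L-node ``contributes its own $u$-type vertex'' (i.e., $w_1,w_2$), but then $v$ is assigned to nobody; you would need to decide (and justify) that the type-2 root L-node carries both $u$ and $v$, or give another consistent convention, before the union is a genuine partition. Second, the step ``removing $u$ disconnects the subnetwork beyond the isthmus from the rest of $\mathrm g$, making $u$ a separating vertex'' can fail for multigraphs with loops: if the L-node is the root of the rooted tree (equivalently, the chosen root edge of $\mathrm g$ is a loop) there is nothing on the pole side of $u$ after unrooting, and if moreover the loop at the far end is left unsubstituted, the isthmus side is a single looped vertex; in the two-vertex ``dumbbell'' multigraph (each vertex carrying a loop, joined by one edge) the unrooted tree has two L-nodes but the graph has no cut vertex at all. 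So the correspondence with separating vertices, as you argue it, does not hold verbatim in the multigraph setting the claim is stated for; you would either have to restrict to the simple case or handle these degenerate L-configurations explicitly. Until the L-node bookkeeping is pinned down in all five cases (including both L types and the root-node degeneracy), the partition statement is not actually established.
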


We focus our study first on \emph{simple} cubic planar graphs; accordingly the cubic networks are also simple (extension of the arguments to multigraphs will be given 
in Section~\ref{sec:mul}).  
In the network-decomposition, this yields the constraint
that loops (when dealing with an \netl at the current node) are to be substituted by non-trivial non-\netls, whereas when substituting in the double-edge network (when dealing
with a \netm at the current node), at least one of the two edges is to be substituted by a non-trivial network. On the decomposition-tree for unrooted graphs, being simple is equivalent to the fact that every leaf has label \labt, every \nodel of degree $2$ has exactly one neighbor of \typel, and every \nodem has degree in $\{2,3\}$; for networks the same constraints hold at non-root nodes, while the root node is allowed to be a leaf (i.e., to have arity $1$) of \typel or \typem. 

\index{h@\textbf{cubic networks}!simple@families of cubic networks (g.f. in straight capital), either simple or multigraph!LSPH@\ttt{$\kL, \kS, \kP, \kH$}{per type}}
\index{h@\textbf{cubic networks}!simple@families of cubic networks (g.f. in straight capital), either simple or multigraph!all@\ttt{$\kD$}{all}}
\index{h@\textbf{cubic networks}!simple@families of cubic networks (g.f. in straight capital), either simple or multigraph!K@\ttt{$\kK$}{polyhedral, obtained from 3-connected cubic planar graphs}}
We define families $\kF=\cup_n\kF_n$ of cubic simple networks (with $n$ the size-parameter), where $\kF$ is replaced by $\kL, \kS, \kP, \kH$ according to the type; we also define $\kD$ as the family of all simple cubic networks and $\kK$ as the family of polyhedral networks. 
 Each of these families is vertex-labeled, in the sense that the $2n$ non-pole vertices have distinct labels in $[1..2n]$ (on the other hand the two poles are unlabeled). We denote by $F(z)=\sum_n\frac1{(2n)!}|\kF_n|z^n$ the associated generating function, replacing $F$ by the corresponding letter for each family. 

We can express $K$ in the following way. 
Let $\kT_n$ be the set of rooted simple planar triangulations with $n+2$ (unlabeled) vertices (and $2n$ faces), and $T(x)=\sum_{n\geq  2}|\kT_n|x^n$ the corresponding ordinary generating function, which is known~\cite{tutte1962census} to have the following exact expression:  \index{t@\textbf{rooted simple planar triangulations}!set@\ttt{$\kT_n$}{with $n+2$ vertices and $2n$ faces}}
\begin{equation}\label{exp:T}
  T(x)=\sum_{n\geq 2}\frac{2}{n(n-1)}\binom{4n-3}{n-2}x^n=\xi^2(1-3\xi+\xi^2),
  \end{equation}
  where $\xi$ is the unique formal power series in $x$ with constant term equal to 0 and defined by $x=\xi(1-\xi)^3$.

  By duality, \emph{simple} planar triangulations correspond to \emph{3-connected} cubic planar maps. Thus, applying Whitney's theorem and taking vertex-labeling into account, we have $(2n)!|\kT_n|=2|\kK_n|$, hence $K(x)=T(x)/2$.

Then a decomposition at the root of the decomposition tree of cubic networks yields the following system (from which the coefficients can be extracted):
\begin{equation}\label{eq:syst_D}
\left\{
\begin{array}{rcl}
D(z)&=&1+L(z)+S(z)+P(z)+H(z),\\[.1cm]
L(z)&=&\frac1{2}L(z)^2+\frac1{2}z(D(z)-1-L(z)),\\[.1cm]
S(z)&=&(D(z)-1-S(z))(D(z)-1),\\[.1cm]
P(z)&=&\frac1{2}z(D(z)^2-1),\\[.1cm]
H(z)&=&\frac{1}{D(z)}K\big(zD(z)^3\big)=\frac{1}{2D(z)}T\big(zD(z)^3\big).
\end{array}
\right.
\end{equation}
In the second line the two terms correspond respectively to the root \nodel having arity $2$ or $1$. The last line (with $T(x)$ given in~\eqref{exp:T}) relies on the fact that a cubic network of size $n$ has $3n-1$ plain edges. %We recall that $T$ is the ordinary generating function of rooted simple planar triangulations, given in~\eqref{exp:T}.

\index{f@\textbf{vertex-labeled cubic planar graphs} (replace $C$ by $M$ for multigraphs)!family@\ttt{$\kC_n$}{with $2n$ vertices}}
\index{f@\textbf{vertex-labeled cubic planar graphs} (replace $C$ by $M$ for multigraphs)!pointed@\ttt{$\kC_n^\bullet$}{of $\kC_n$ with one marked vertex}}
Let $\kC=\cup_n\kC_n$ be the family of vertex-labeled connected cubic planar graphs of size $n$, i.e., the $2n$ vertices carry  
distinct labels in $[1..2n]$. Let $C(z)=\sum_{n\geq 1}\frac1{(2n)!}|\kC_n|z^n$ be the associated exponential generating function. 
Using Claim~\ref{claim:decomp_tree}, one obtains the following expression for the exponential generating function $\Cp(z)=2zC'(z)$ of pointed (i.e., with a marked vertex) cubic planar graphs~\cite{noy2020further}:
\begin{equation}\label{eq:Cp}
\Cp(z)=\Big(\frac1{2}\big(D(z)-1-L(z)\big)L(z)+\frac1{6z}L(z)^3\Big)+\frac{z}{6}(D(z)^3-3D(z)+2)+\frac1{3}G(z),
\end{equation}
where we define $G(z):=K(zD(z)^3)$.
The three groups correspond to the marked vertex being involved in an \compl (with the two terms corresponding to the marked vertex
yielding two or three connected components upon its removal), an \compm, and a \compt respectively (the factor $1/3$ in front of $G(z)$ is due to having a marked vertex instead of a marked directed edge). 

\index{h@\textbf{cubic networks}|)}

\subsection{Largest 3-connected component}\label{sec:largest_3comp}

We now use singularity analysis of the involved generating functions, in order to show (in this subsection and the next one) that a random cubic connected planar graph almost surely has a unique giant 3-connected
component (of linear size), and to establish related properties. 
This analysis follows from a well-established methodology~\cite{banderier2001random,gao1999size}.  
Similar results appear in the recent preprint~\cite{stufler2022uniform} (in particular Theorem 1.2). 
We  provide here our own analysis (thereby
explaining the mild differences with the statements and proof of~\cite{stufler2022uniform}) for the sake of completeness, and since we will extend these results to cubic planar multigraphs in Section~\ref{sec:mul}.

\index{c@\textbf{constants}!rho@$\rho \approx 0.101905$ (simple) or $=54/79^{3/2}$ (multigraph)}
%If follows from~\cite{BKLM,noy2020further,fang2016cubic} 
It follows from \cite[Sec.3.1]{noy2020further} that the generating functions $F(z)$ with $F\in\{L,D,G,\Cp\}$ have the same radius of convergence $\rho\approx 0.101905$ and 
these series have singular expansion (in a slit neighborhood) of the form\footnote{In~\cite{noy2020further}, the size-parameter is the number of vertices, whereas it is half the number of vertices here. The analysis is completely similar under this modification, they have dominant singularities at $\pm\sqrt{\rho}$ whereas we have a single dominant singularity at $\rho$. 
}:
\begin{equation}\label{eq:asympt_form}
F(z)=F_0-F_2Z^2+F_3Z^3+O(Z^4),\ \ \  \mathrm{where}\ Z=\sqrt{1-z/\rho}.
\end{equation}
Moreover, these series are analytically continuable to a domain $\{|z|\leq \rho+\eta,\ z-\rho\notin\R_+\}$ for some $\eta>0$.  
By transfer theorems of analytic combinatorics~\cite[VI.3]{flajolet2009analytic}, we have $[z^n]F(z)\sim \kappa \rho^{-n}n^{-5/2}$, where $\kappa=\frac{3F_3}{4\sqrt{\pi}}$.

We say that a pointed connected cubic planar graph is of \typet if the marked vertex is incident to a \compt. 
According to~(\ref{eq:Cp}), this occurs with probability 
\begin{equation}\label{eq:proba_T}
[z^n]\frac1{3}G(z)/[z^n]\Cp(z)\sim \frac1{3}G_3/\Cp_3.
 \end{equation}
 
Let $E(z)=zD(z)^3$ (note that $E(z)$ also has a singular expansion of the form $E(z)=E_0-E_2Z^2+E_3Z^3+O(Z^4)$), 
so that the generating function of cubic connected planar graphs of \typet is 
\[
\frac1{3}G(z)=\frac1{3}K(E(z)).
\] 
As shown in~\cite[Sec.3.1]{noy2020further}, 
this composition scheme is critical, in the sense that the radius of convergence of $y\to K(y)$ is $y_0=\rho D_0^3$, and $K(y)$ has a 
singular expansion of the form $K(y)=K_0-K_2Y^2+K_3Y^3+O(Y^4)$, with $Y=\sqrt{1-y/y_0}$. We can now use the results from~\cite{banderier2001random}. Let $A(x)$ be the 
density function defined by
\[
A(x)=\frac1{\pi}\sum_{k\geq 1}(-1)^{k-1}x^k\frac{\Gamma(1+2k/3)}{\Gamma(1+k)}\mathrm{sin}(2\pi k/3). 
\]
For $c>0$, the probability distribution of density $cA(cx)$ is called \emph{Airy-map} distribution of parameter $c$.

\index{c@\textbf{constants}!alpha@$\alpha \approx 0.8509$ (simple) or $=199/316$ (multigraph)}
% \index{c@\textbf{constants}!c@$c \approx 1.5190$ (simple) or $=\frac{2}{3}\left(\frac{79}{17}\right)^{2/3}$ (multigraph)}
%\index{c@\textbf{constants}!plp@$p'_{\ell}$}
\begin{proposition}[Apply Theorem~5 in~\cite{banderier2001random}]\label{prop:core}
Define the constants
\[p'_{\ell}=1-\frac{E_3G_2}{E_2G_3},\ \ \alpha=\frac{E(\rho)}{\rho E'(\rho)}=\frac{E_0}{E_2},\ \ c=\frac1{\alpha}\left(\frac{E_2}{3E_3}\right)^{2/3}.\] 
\index{f@\textbf{vertex-labeled cubic planar graphs} (replace $C$ by $M$ for multigraphs)!randomp@\ttt{$\cnp$}{uniform in the subset of $c\in\kC_n^\bullet$ of \typet}}
For $\cnp$ the random pointed cubic 3-connected planar graph of \typet, 
let $X_n$ be the size of the 3-connected component incident to the marked vertex.  
Then, for any $x\in\R$ (and uniformly in any compact set of $\R$)
\[
P(X_n=\lfloor \alpha n + x n^{2/3}\rfloor) =  p'_{\ell}\ \!n^{-2/3}\ \!cA(cx)\cdot(1+o(1)).
\]
\end{proposition}
\begin{remark}
The quantities $\alpha,c$ are explicitly computable, $\alpha\approx 0.8509$ is root of the polynomial 
{\footnotesize
\[
\begin{array}{c}
13026164315213824\, x^6 - 10328139725010432\, x^4 - 1810655504690048\, x^3 \\
+ 1409382842895504\, x^2 + 591702923494104\, x + 62103840598801,
\end{array}
\]
}
and $c\approx 1.5190$ is root of the polynomial 
{\scriptsize
\[
\begin{array}{c}
225719792049828011973129\,{x}^{18}-25997088298647780811521072\,{x}^{15
}+1072598816640412427380432704\,{x}^{12}\\
-29427138982726419066720150528
\,{x}^{9}+347939923194525847286624477184\,{x}^{6}\\
-1656815421544715458705316511744\,{x}^{3}+
2651264949483594255344169385984.
\end{array}
\]
}
Compared to~\cite[Theo 1.2]{stufler2022uniform} the constant $c$ differs by a factor $2^{1/3}$; this is due to the size $n$ being the number of vertices in~\cite{stufler2022uniform} 
whereas it is half the number of vertices
here. \dotfill
\end{remark}

The \emph{3-connected core} of a cubic connected planar multigraph is its largest $3$-connected component (if there is a tie, we break the tie by choosing the one whose minimal
label is smaller). We now obtain: 
\index{f@\textbf{vertex-labeled cubic planar graphs} (replace $C$ by $M$ for multigraphs)!random@\ttt{$\cn$}{uniform in $\kC_n$}}
\begin{theorem}\label{theo:largest3comp} 
For $\cn$ the random connected cubic planar graph with $2n$ vertices,  
let $X_n^*$ be the size of the 3-connected core of $\cn$.  
Then, for any $x\in\R$ (and uniformly in any compact set of $\R$), we have, with the constants $\alpha,c$ defined in Proposition~\ref{prop:core}:
\begin{equation}\label{eq:Xns}
P(X_n^*=\lfloor \alpha n + x n^{2/3}\rfloor) =  n^{-2/3}\ \!cA(cx)\cdot(1+o(1)).
\end{equation}
\end{theorem}

Note that Theorem \ref{theo:largest3comp} ensures that $X_n^*$ is concentrated around $\alpha n$ with fluctuations of order $n^{2/3}$,  
and that $(X_n^*-\alpha n)/n^{2/3}$ converges in law to the Airy-map distribution of parameter $c$.

\begin{proof}
Let $\kCnp$ be the set of pointed connected cubic planar graphs with $2n$ vertices, and let $\cnp$ be uniformly random in $\kCnp$. 
In view of~(\ref{eq:proba_T}), if we define 
\begin{equation}
p_{\ell}:=\frac{G_3}{3\Cp_3}p_{\ell}'\ =\ \frac{1}{3\Cp_3}\left(G_3-\frac{E_3G_2}{E_2}\right),
\end{equation}
and define $X_n$ to be $0$ if $\cnp$ is not of \typet, and $X_n$ to be the size of the 3-connected core otherwise,  
then by \eqref{eq:proba_T} and Proposition \ref{prop:core}, we have for any $x\in\R$ (and uniformly in any compact set of $\R$), 
\[
P(X_n=\lfloor \alpha n + x n^{2/3}\rfloor) =  n^{-2/3}\ \!p_{\ell}\ \!cA(cx)\cdot(1+o(1)).
\]
To show~(\ref{eq:Xns}) we rely on a re-rooting argument given in~\cite[Appendix D]{banderier2001random}. 
The graphs with a giant 3-connected component of size $\approx \alpha n$ incident to the marked vertex are in proportion $\sim p_{\ell}$ among $\kCnp$, and they 
 admit $\sim 1/\alpha$ re-rootings at an arbitrary vertex. In addition, they produce different objects in $\kCnp$, because, as observed in~\cite{stufler2022uniform}, the fact that $\alpha>1/2$  ensures that there is at most one such component within the graph, for $n$ large enough.

 Thus $p_{\ell}/\alpha\leq 1$, and to conclude that almost all graphs in $\kCnp$ are of that form it  remains to show that $p_{\ell}=\alpha$. 
 Here we proceed a bit differently from~\cite{stufler2022uniform}.   Rather than computing the precise evaluations of $\alpha$ and $p_{\ell}$, 
 we will show that each of $\alpha,p_{\ell}$ 
 has a rational expression in terms of $\rho, L_0, L_2$ and that these expressions are the same. Thereby  our calculations do not require any evaluations of the involved quantities, treated as formal symbols, nor the exact expression of $K(x)=T(x)/2$ given in~\eqref{exp:T}; we just use the fact that the series in $z$ have a singular expansion of the form~(\ref{eq:asympt_form}). (Thus the coincidence of $p_{\ell}$ and $\alpha$ can be seen as a ``generic'' property of the decomposition grammar.)

 The second line of~(\ref{eq:syst_D}) yields for each $i\in\{0,2,3\}$ a rational expression for $D_i$ in terms of $\{\rho,L_0,L_i\}$, precisely
 \[
 D_0=1+\frac{L_0(2+\rho-L_0)}{\rho},\ \  D_2=\frac{L_0^2 - 2L_0L_2 + L_2\rho - 2L_0 + 2L_2}{\rho},\ \  D_3=\frac{L_3(2+\rho-2L_0)}{\rho}.
 \]
Since $E(z)=zD(z)^3$, this directly yields for each $i\in\{0,2,3\}$ a rational expression for $E_i$ in terms 
 of $\{\rho,L_0,L_i\}$, e.g. $E_3= 3L_3(2+\rho- 2L_0)(L_0^2 - L_0\rho - 2L_0 - \rho)^2/\rho^2$. 
 We can then inject the expressions of $E_0$ and $E_2$ into $\alpha=\frac{E_0}{E_2}$, and find
 \[
 \alpha=\frac{\rho+L_0\rho + 2L_0 -L_0^2}{2L_0^2 - 6L_0L_2 + L_0\rho + 3L_2\rho - 4L_0 + 6L_2 + \rho}.
 \]
It remains to find a similar rational expression for $p_{\ell}=\frac{1}{3\Cp_3}\left(G_3-\frac{E_3G_2}{E_2}\right)$. From~(\ref{eq:syst_D}) we extract the equation 
(noting that the 3rd line yields $S(z)=(D(z)-1)^2/D(z)$)
 \[
 D(z)=\displaystyle 1+ L(z)+\frac{(D(z)-1)^2}{D(z)}+\frac1{2}z(D(z)^2-1)+\frac{1}{D(z)}G(z),
 \]
 from which we extract for each $i\in\{0,2,3\}$ a (quite larger) rational expression for $G_i$ in terms 
 of $\{\rho,L_0,L_i\}$.   From~(\ref{eq:Cp}) we can then extract a rational expression for $\Cp_3$ in terms of $\{L_0,L_3,\rho\}$. From the expressions of $E_2,E_3,G_2,G_3$,
   we obtain a rational expression of $p_{\ell}$ that exactly matches the one of $\alpha$ given above. Thus $p_{\ell}=\alpha$. 
\end{proof}

\subsection{Cubic planar graphs with a marked 3-connected component}\label{sec:marked_3c}

\index{f@\textbf{vertex-labeled cubic planar graphs} (replace $C$ by $M$ for multigraphs)!q-comp@\ttt{$\kC_n^{(q)}$}{of $\kC_n$ with a marked 3-connected component with $q$ vertices}}
\index{f@\textbf{vertex-labeled cubic planar graphs} (replace $C$ by $M$ for multigraphs)!q-comp-rand@\ttt{$\cnq$}{uniform in $\kC_n^{(q)}$}}
For two integers $n,q$, let $\kC_n^{(q)}$ be the set of connected cubic planar graphs of size $n$ having a marked 3-connected component of size $q$, and
denote by  $\cnq$ a uniformly random element in $\kC_n^{(q)}$. 
It will also prove convenient for us to see $\cnq$ as closely related to a model with more independence. The \emph{critical Boltzmann distribution} on $\kD$ is the probability distribution 
\index{boltzmann@\ttt{$\mu^{(\kD)}$}{critical Boltzmann distribution on the combinatorial family $\kD$}}
$\mu^{(\kD)}$  defined as  
\begin{equation}
\mu^{(\kD)}(\mathrm{d})=\frac{\rho^{|\mathrm{d}|}}{(2|\mathrm{d}|)!D(\rho)}\ \ \ \forall\ \mathrm{d}\in\kD,
\end{equation}
where we recall that $\rho$ is the radius of convergence of $D(z)$.

\index{f@\textbf{vertex-labeled cubic planar graphs} (replace $C$ by $M$ for multigraphs)!q-random@\ttt{$\cq$}{generated by substituting the edges of a uniform 3-connected graph of size $q$}}
Let $\cq$ be a random connected cubic planar graph with a marked 3-connected component  of size $q$ constructed in the following way. Letting $\mathrm{K}$ be a uniformly random 3-connected graph of size $q$, we replace (independently) each edge of $\mathrm{K}$ by a cubic network sampled from $\mu^{(\kD)}$. 
The resulting graph is $\cq$. Note that, for any $n,q\geq 1$, $\cnq$ is distributed as $\cq$ conditioned to have size $n$.

% We let $\cq$ be the random connected cubic planar graph with a marked 3-connected component $\mathrm{K}$ of size $q$, each edge of $\mathrm{K}$ being independently replaced by a cubic network drawn under $\mu^{(\kD)}$. Note that, for any $n,q\geq 1$, $\cnq$ is distributed as $\cq$ conditioned to have size $n$.  
As stated next, the conditioning probability is polynomial when $q=\alpha n+O(n^{2/3})$:

\begin{lemma}\label{lem:size_core}
For every $C>0$, we can find $0<c_- < c_+<\infty$ such that for every $q$ with $|q-\alpha n|\leq C n^{2/3}$, we have $n^{2/3}\mathbb{P}(|\cq|=n) \in(c_-,c_+)$. 
\end{lemma}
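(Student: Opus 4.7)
The plan is to recast $\mathbb{P}(|\cq|=n)$ as a lattice hitting probability for a random walk with iid heavy-tailed increments, and then to apply a local limit theorem (LLT) in the domain of attraction of a spectrally positive $3/2$-stable law --- essentially the machinery already underlying Proposition~\ref{prop:core}.

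Concretely, since $\cq$ is obtained by substituting each of the $3q$ edges of a uniform $3$-connected cubic planar graph of size $q$ independently by a cubic network of law $\mu^{(\kD)}$, its size decomposes as $|\cq|=q+\sum_{i=1}^{3q}X_i$ with $X_i$ iid of law $\mu^{(\kD)}$. At the level of generating functions this is the identity
\[
\mathbb{P}(|\cq|=n)\;=\;\frac{\rho^{n-q}\,[z^{n-q}]D(z)^{3q}}{D(\rho)^{3q}}.
\]
From the singular expansion~\eqref{eq:asympt_form} one reads $\mu^{(\kD)}(X=k)\sim \kappa\,k^{-5/2}$, so $X$ has finite mean but infinite variance and $X-\mathbb{E}[X]$ lies in the domain of attraction of a spectrally positive $3/2$-stable law; the support of $X$ contains both $0$ and $1$, so the usual lattice (non-arithmetic) condition holds. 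A logarithmic derivative of $E(z)=zD(z)^3$ at $\rho$ identifies $3\,\mathbb{E}[X]=\rho E'(\rho)/E(\rho)-1=1/\alpha-1$, so that for $q=\alpha n+xn^{2/3}$ with $|x|\leq C$ one has
\[
(n-q)-3q\,\mathbb{E}[X] \;=\; n-q/\alpha \;=\; -\alpha^{-1}xn^{2/3},
\]
which is exactly of order $(3q)^{2/3}$, the natural scale dictated by the $3/2$-stable regime.

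The classical LLT for lattice variables in the domain of attraction of a stable law (see~\cite{banderier2001random}, where essentially this very estimate is used to derive Proposition~\ref{prop:core}) then provides, uniformly in $m$ for which $(m-3q\,\mathbb{E}[X])/(3q)^{2/3}$ stays in any fixed compact of $\mathbb{R}$,
\[
\mathbb{P}\Bigl(\textstyle\sum_{i=1}^{3q}X_i=m\Bigr) \;=\; \frac{1+o(1)}{(3q)^{2/3}}\,g\!\left(\frac{m-3q\,\mathbb{E}[X]}{(3q)^{2/3}}\right),
\]
with $g$ a constant multiple of the map-Airy density $A$ of Proposition~\ref{prop:core}. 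Specialising to $m=n-q$ under the assumption $|q-\alpha n|\leq Cn^{2/3}$ yields $n^{2/3}\,\mathbb{P}(|\cq|=n)=(1+o(1))\,C_\alpha\,g(y_{n,x})$, where $y_{n,x}$ ranges over a fixed compact interval of $\mathbb{R}$. Since the spectrally positive $3/2$-stable density is analytic and strictly positive on all of $\mathbb{R}$, $g$ attains a positive minimum and a finite maximum on that compact, which delivers the constants $c_-<c_+$. The only serious technical input is this uniform form of the lattice stable LLT; everything else is bookkeeping around the $Z^3$-term in the singular expansion of $D$ already recorded in~\eqref{eq:asympt_form}.
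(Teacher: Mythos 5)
Your proposal is correct and matches the paper's proof essentially step for step: the paper likewise writes $\mathbb{P}(|\cq|=n)=[u^n]p(u)^q$ with $p(u)=E(\rho u)/E(\rho)$ (equivalent to your sum-of-iid decomposition), extracts the same singular expansion $p(u)=1+\tfrac1\alpha(u-1)+\tfrac{E_3}{E_0}(1-u)^{3/2}+O((u-1)^2)$, and invokes the $3/2$-stable local limit theorem (\cite[Theo.\ 11(ii)]{banderier2001random}) to get $\mathbb{P}(|\cq|=n)\sim \tilde c\,A(\tilde c x)/q^{2/3}$ uniformly on compacts. Your extra checks (identification of $3\,\mathbb{E}[X]=1/\alpha-1$, the lattice condition, positivity of the map-Airy density) are correct and just make explicit what the paper leaves to the cited reference.
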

Informally, this local limit theorem-type result ensures that when $q,n$ are related by $q=\alpha n+O(n^{2/3})$, then $\mathbb{P}(|\cq|=n)=\Theta(n^{-2/3})$. We will
often use such an informal notation thereafter.   
\begin{proof}
Let $n^{(q)}$ be the size of $\cq$. With $E(z)=zD(z)^3$, we have
\[
\mathbb{P}(n^{(q)}=n)=[u^n]p(u)^q,\ \ \mathrm{where}\ p(u):=\frac{E(\rho u)}{E(\rho)}.
\]
With the notation~\eqref{eq:asympt_form}, we have
\[
p(u)=1+\frac1{\alpha}(u-1)+\frac{E_3}{E_0}(1-u)^{3/2}+O((u-1)^2).
\]

A local limit theorem (for a stable law of parameter $3/2$) holds~\cite[Theo.11(ii)]{banderier2001random}: for $n=\frac1{\alpha}q-xq^{2/3}$, uniformly in $x$ on any compact interval, we have
\[
\mathbb{P}(n^{(q)}=n)\sim \frac{\tilde{c}A(\tilde{c}x)}{q^{2/3}},
\]
with $\tilde{c}=(E_0/E_3)^{2/3}$, and $\tilde{c}A(\tilde{c}x)$ the Airy-map distribution of parameter $\tilde{c}$. 
\end{proof}

Let  $\mathrm{C}$ be a connected cubic planar graph with a marked 3-connected component $\mathrm{K}$. Note that $\mathrm{C}$ is obtained from $\mathrm{K}$ by substituting every edge of $\mathrm{K}$  by a cubic network; such a network is called an \emph{attached network of $\mathrm{C}$}. 

\begin{lemma}\label{lem:cnq}
Let $\epsilon>0$. 
When $q,n$ are related by $q=\alpha n+O(n^{2/3})$, then a.a.s. all attached networks of $\cnq$ have size at most $n^{2/3+\epsilon}$. 
\end{lemma}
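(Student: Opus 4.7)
The plan is to use the observation just above the lemma: $\cnq$ has the same distribution as $\cq$ conditioned on $|\cq|=n$, where $\cq$ is built from a uniform 3-connected planar cubic graph $\mathrm{K}$ of size $q$ by independently substituting each of its $3q$ edges with a $\mu^{(\kD)}$-Boltzmann network. Writing $N_i$ for the size of the network substituted at the $i$th edge of $\mathrm{K}$, the $N_i$ are i.i.d., $|\cq|=q+\sum_{i=1}^{3q}N_i$, and so the conditioning amounts to $S_m=n-q$ with $m=3q$ and $S_m=\sum_{i=1}^mN_i$. It thus suffices to show
\[
\pp\!\left(\max_{1\leq i\leq m}N_i > n^{2/3+\epsilon}\ \Big|\ S_m=n-q\right)\xrightarrow[n\to\infty]{}0.
\]

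First I would record the polynomial tail bound $\pp(N_1=k)=O(k^{-5/2})$, hence $\pp(N_1\geq k)=O(k^{-3/2})$, which follows from the singular expansion $D(z)=D_0-D_2Z^2+D_3Z^3+O(Z^4)$ of the Boltzmann generating function by the transfer theorems of analytic combinatorics. Thus $N_1$ lies in the domain of attraction of a $3/2$-stable law, with finite mean $\E[N_1]=\rho D'(\rho)/D(\rho)=D_2/D_0$. Expressing the $D_i$ and $E_i$ rationally in $\rho,L_0,L_i$ (as in the proof of Theorem~\ref{theo:largest3comp}) one checks that $D_2/D_0=(1-\alpha)/(3\alpha)$, so $m\E[N_1]-(n-q)=O(n^{2/3})$ and the conditioning sits inside the stable CLT window; Lemma~\ref{lem:size_core} then gives $\pp(S_m=n-q)=\Theta(n^{-2/3})$.

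The core of the argument is a union bound combined with a classical local limit theorem for lattice variables in the domain of attraction of a $3/2$-stable law, which supplies the uniform bound $\pp(S_\ell=r)\leq C\,\ell^{-2/3}$ for \emph{every} integer $r$. Combining it with the tail estimate,
\[
\pp\!\left(N_1>n^{2/3+\epsilon},\, S_m=n-q\right)=\sum_{k>n^{2/3+\epsilon}}\pp(N_1=k)\,\pp(S_{m-1}=n-q-k)=O\!\left(m^{-2/3}(n^{2/3+\epsilon})^{-3/2}\right).
\]
Multiplying by $m=\Theta(n)$ via the union bound yields $\pp(\max_iN_i>n^{2/3+\epsilon},\,S_m=n-q)=O(n^{-2/3-3\epsilon/2})$, and dividing by the lower bound $\pp(S_m=n-q)\geq c_-n^{-2/3}$ from Lemma~\ref{lem:size_core} produces a conditional probability of order $O(n^{-3\epsilon/2})\to 0$.

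The only non-routine ingredient is the uniform local limit bound $\pp(S_\ell=r)\leq C\,\ell^{-2/3}$ valid for \emph{all} integers $r$, and not merely those in the window $|r-\ell\E[N_1]|=O(\ell^{2/3})$ where Lemma~\ref{lem:size_core} directly applies. For $r$ far from the mean it comes from polynomial tail estimates on the Airy-map density, which can be obtained by a saddle-point analysis of the generating function $p(u)^\ell$ introduced in the proof of Lemma~\ref{lem:size_core}; this is the main point requiring care, though it is classical for heavy-tailed sums in the domain of attraction of a stable law.
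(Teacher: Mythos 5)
Your argument is correct and arrives at the same quantitative conclusion, but via a genuinely different combination of estimates than the paper. You use the polynomial \emph{right} tail $\mathbb{P}(N_1 > k) = O(k^{-3/2})$ of a single Boltzmann network, split off one coordinate, and then control the remaining sum $S_{m-1}$ uniformly via the classical Gnedenko local limit theorem for aperiodic integer-valued variables in the domain of attraction of a $3/2$-stable law, which gives $\sup_r \mathbb{P}(S_\ell = r) = O(\ell^{-2/3})$. The paper avoids the uniform LLT entirely: it quotes a saddle-point large-deviation bound from Gao--Wormald giving an exponentially small \emph{left} tail for $|\cq| - |M_i|$, namely $\mathbb{P}(|\cq| - |M_i| < q/\alpha - q^{2/3+\epsilon'}) = O(\exp(-q^{\epsilon'}))$ with $\epsilon' = \epsilon/2$, and then observes that after conditioning on $|\cq| = n$ this left-tail event is precisely the right-tail event $|M_i| > n - q/\alpha + q^{2/3+\epsilon'}$, whose threshold is $O(n^{2/3+\epsilon})$ once $q = \alpha n + O(n^{2/3})$. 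In both proofs Lemma~\ref{lem:size_core} supplies the same $\Theta(n^{-2/3})$ normalizer. The trade-off: the paper points to a ready-made lemma for the tail estimate and stays within the central window of Lemma~\ref{lem:size_core}, whereas your route rests on a concentration input (the uniform local limit bound) that requires the full LLT for stable laws to cover all $r$ --- you correctly flag this as the one ingredient needing care, and the remaining arithmetic, including $\E[N_1] = D_2/D_0 = (1-\alpha)/(3\alpha)$ and the final conditional bound of order $n^{-3\epsilon/2}$, checks out.
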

\begin{proof}
Let $\epsilon'=\epsilon/2$. 
We argue via $\cq$ and conditioning on the size.  
The $3q$ edges of the marked 3-connected component can be canonically ordered (e.g. lexicographic ordering on the pair of labels of the incident vertices), which yields a canonical ordering $M_1,\ldots,M_{3q}$ of the $3q$ attached networks. In $\cq$ these are independently drawn under $\mu^{(\kD)}$, and the size of $\cq$ is $|\cq|=|M_1|+\cdots+|M_{3q}|$. 
Using~\cite[Theo.1(iii)-(b)]{gao1999size}, it follows that 
\[
\mathbb{P}\left(|\cq|< \frac1{\alpha}q-q^{2/3+\epsilon'}\right)=O(\exp(-q^{\epsilon'})).
\] 
(a large deviation result, obtained by a saddle-point estimate).

Similarly, for any $i\in[1..3q]$, we have 
$\mathbb{P}(|\cq|-|M_i|<\frac1{\alpha}q-q^{2/3+\epsilon'})=O(\exp(-q^{\epsilon'}))$ (we apply the same argument to $(3q-1)$ attached networks instead of $3q$). By the union bound, the probability that $|\cq|-|M_i|< \frac1{\alpha}q-q^{2/3+\epsilon'}$ for some $i\in[1..3q]$ is $O(q\exp(-q^{\epsilon'}))$. 
When conditioning on $\cq$ to have size $n$, the event that $|\cq|-|M_i|<\frac1{\alpha}q-q^{2/3+\epsilon'}$ for some $i\in[1..3q]$ is the same as $|M_i|>n-\frac1{\alpha}q+q^{2/3+\epsilon'}$ for some $i\in[1..3q]$.  Hence, when $q,n$ are related by $q=\alpha n+O(n^{2/3})$, Lemma~\ref{lem:size_core} ensures that the probability that $|M_i|>n-\frac1{\alpha}q+q^{2/3+\epsilon'}$ for some $i\in[1..3q]$ is $O(n^{2/3}q\exp(-q^{\epsilon'}))=o(1)$. Moreover, for $n$ large enough, this event is implied by the event that  $|M_i|>n^{2/3+\epsilon}$ for some $i\in[1..3q]$. 
\end{proof}

\begin{remark}
For $q=\alpha n + O(n^{2/3})$, as already observed in~\cite{stufler2022uniform} the fact that $\alpha>1/2$ ensures that for $n$ large enough $\cnq$ coincides with the random connected cubic planar graph $\cn$ conditioned to have its $3$-connected core of size $q$. \dotfill
\end{remark}

\subsection{Extension of the results to cubic planar multigraphs}\label{sec:mul}

\index{h@\textbf{cubic networks}!simple@families of cubic networks (g.f. in straight capital), either simple or multigraph!LSPH@\ttt{$\kL, \kS, \kP, \kH$}{per type}}
\index{h@\textbf{cubic networks}!simple@families of cubic networks (g.f. in straight capital), either simple or multigraph!K@\ttt{$\kK$}{polyhedral, obtained from 3-connected cubic planar graphs}}
In the context of multigraphs, it is somehow more natural (e.g. for kernel extraction relations) to consider that the half-edges are labeled instead of the vertices\footnote{This is equivalent to 
considering vertex-labeled multigraphs with a certain compensation-factor~\cite{janson1993birth}, which for cubic graphs with more than $2$ vertices is $2^{-\#\mathrm{loops}-\#\mathrm{double\ edges}}$.}. 
Thus, let $\kM_n$ denote the set of half-edge-labeled connected cubic planar multigraphs of size $n$ (i.e., with $2n$ vertices), and $\mn$ denotes an element of $\kM_n$ taken uniformly at random.
Regarding cubic networks (allowing for loops and multiedges), we consider that, in size $n$ (i.e., with $2n$ non-pole vertices), 
the $6n-2$ half-edges on plain edges carry distinct labels in $[1..6n-2]$. For each family $\kF$ of cubic networks, with $\kF\in\{\kD,\kL,\kS,\kP,\kH\}$, we let $F(z)=\sum_n\frac1{(6n-2)!}|\kF_n|z^n$
be the associated generating function. 
In this context, the equation-system~\eqref{eq:syst_D} (again obtained from a decomposition at the root of the associated tree) becomes
\begin{equation}\label{eq:syst_D_multigraphs}
\left\{
\begin{array}{rcl}
D(z)&=&1+L(z)+S(z)+P(z)+H(z),\\[.1cm]
L(z)&=&\frac1{2}L(z)^2+\frac1{2}z\,D(z),\\[.1cm]
S(z)&=&(D(z)-1-S(z))(D(z)-1),\\[.1cm]
P(z)&=&\frac1{2}z\,D(z)^2,\\[.1cm]
H(z)&=&\frac{1}{D(z)}K\big(zD(z)^3\big)=\frac{1}{2D(z)}T\big(zD(z)^3\big).
\end{array}
\right.
\end{equation}
 We warn the reader of our abuse of notations, as $D,L,S,P$ and $H$ all denote different objects in the context of graphs and multigraphs; we do so because of the extreme similarity of the proofs and results. Which object we consider should be clear from context.   
Now, with $M(z)=\sum_{n\geq 1}\frac1{(6n)!}|\kM_n|z^n$ and $\Mp(z)=2z\frac{\mathrm{d}}{\mathrm{d}z}M(z)$, the analogue of~\eqref{eq:Cp} is
\begin{equation}\label{eq:Cp_mult}
\Mp(z)=\frac1{2}D(z)\,L(z)+\frac1{6z}L(z)^3+\frac{z}{6}D(z)^3+\frac1{3}G(z),
\end{equation}
where $G(z)=K(E(z))$, with $E(z)=zD(z)^3$ (the series $K(x)$ is the same as before, since $3$-connected graphs are simple).  

\index{c@\textbf{constants}!rho@$\rho \approx 0.101905$ (simple) or $=54/79^{3/2}$ (multigraph)}
We can now rely on the singularity analysis performed in~\cite{fang2016cubic,kang2012two} to obtain the analogue of Theorem~\ref{theo:largest3comp}.  
The generating functions $F(z)$ with $F\in\{L,D,E,G,\Mp\}$ have the same radius of convergence $\rho=54/(79)^{3/2}$ (we still use the letter $\rho$ even if the radius of convergence for multigraphs is different than for simple graphs), and 
these series have singular expansion (in a slit neighborhood) of the form
\begin{equation}\label{eq:asympt_form_multi}
F(z)=F_0-F_2Z^2+F_3Z^3+O(Z^4),\ \ \  \mathrm{where}\ Z=\sqrt{1-z/\rho}.
\end{equation}
As in the case of graphs, the composition scheme in~\eqref{eq:syst_D_multigraphs} is critical, so that we can apply Theorem~5 from~\cite{banderier2001random}, exactly 
in the same way as in Proposition~\ref{prop:core}. Defining the constants
\[p'_{\ell}=1-\frac{E_3G_2}{E_2G_3},\ \ \alpha=\frac{E(\rho)}{\rho E'(\rho)}=\frac{E_0}{E_2},\ \ c=\frac1{\alpha}\left(\frac{E_2}{E_3}\right)^{2/3},\]
we obtain that,  for $\mnp$ the random pointed connected cubic planar multigraph of size $n$, conditioned to have the marked vertex on a 3-connected component,  
the size $X_n$ of the 3-connected component at the marked vertex satisfies
 for any $x\in\R$ (and uniformly in any compact set of $\R$)
\[
P(X_n=\lfloor \alpha n + x n^{2/3}\rfloor) =  p'_{\ell}\ \!n^{-2/3}\ \!cA(cx)\cdot(1+o(1)).
\]
\index{c@\textbf{constants}!alpha@$\alpha \approx 0.8509$ (simple) or $=199/316$ (multigraph)}
% \index{c@\textbf{constants}!c@$c \approx 1.5190$ (simple) or $=\frac{2}{3}\left(\frac{79}{17}\right)^{2/3}$ (multigraph)}
The constants $\alpha,c$ are again explicitly computable, and we find the following values, much simpler than in the case of graphs:
\begin{equation}\label{eq:alpha_c}
\alpha=\frac{199}{316}\approx 0.6297, \ \ \ c=\frac{2}{3}\left(\frac{79}{17}\right)^{2/3}\approx 1.8565.
\end{equation}
   
We can now obtain the analogue of Theorem~\ref{theo:largest3comp}.

\index{f@\textbf{vertex-labeled cubic planar graphs} (replace $C$ by $M$ for multigraphs)!family@\ttt{$\kC_n$}{with $2n$ vertices}}
\index{f@\textbf{vertex-labeled cubic planar graphs} (replace $C$ by $M$ for multigraphs)!random@\ttt{$\cn$}{uniform in $\kC_n$}}
\begin{theorem} 
For $\mn$ a uniformly random connected cubic planar multigraph of size $n$,     
let $X_n^*$ be the size of its $3$-connected core. 
Then, for any $x\in\R$, and uniformly in any compact set of $\R$,  
\begin{equation}\label{eq:Xns_multi}
P(X_n^*=\lfloor \alpha n + x n^{2/3}\rfloor) =  n^{-2/3}\ \!cA(cx)\cdot(1+o(1)),
\end{equation}
where $\alpha,c$ are given in~\eqref{eq:alpha_c}.
\end{theorem}
Again, this implies that $X_n^*$ is concentrated around $\alpha\,n$, and $(X_n^*-\alpha\, n)/n^{2/3}$ converges to the Airy-map distribution of parameter $c$.  
\begin{proof}
Similarly as in Theorem~\ref{theo:largest3comp}, we have to check that the constant $p_{\ell}:=\frac{G_3}{3\Mp_3}p_{\ell}'$ is equal to $\alpha$
(again we do not need the precise algebraic equations of the involved series, the proof is ``generic'' and does not require the expression of $K(x)$). The second
line of~\eqref{eq:syst_D_multigraphs} yields, for each $i\in\{0,2,3\}$, a rational expression of $D_i$   
in terms of $\{\rho, L_0,L_i\}$:
\[
D_0=\frac{L_0(2-L_0)}{\rho},\ \ D_2=\frac{L_0^2 - 2L_0L_2 - 2L_0 + 2L_2}{\rho},\ \  D_3=\frac{2L_3(1-L_0)}{\rho}.
\] 
Using $E(z)=zD(z)^3$, this yields, for each $i\in\{0,2,3\}$, a rational expression for $E_i$ in terms of $\{\rho,L_0,L_i\}$, which 
gives a rational expression for $\alpha=\frac{E_0}{E_2}$. We find
\[
\alpha=\frac{L_0(2 - L_0)}{2(L_0^2 - 3L_0L_2 - 2L_0 + 3L_2)}.
\]
It remains to find a similar rational expression for $p_{\ell}=\frac1{3\Mp_3}\Big(G_3-\frac{E_3G_2}{E_2}\Big)$. 
From~\eqref{eq:syst_D_multigraphs} (note that the third line yields $S(z)=(D(z)-1)^2/D(z)$),   we extract the equation
\[
D(z)=1+L(z)+\frac{(D(z)-1)^2}{D(z)}+\frac1{2}zD(z)^2+\frac1{D(z)}G(z),
\]
which yields, for $i\in\{0,2,3\}$, a rational expression for $G_i$ in terms of $\{\rho,L_0,L_i\}$. Finally, we can extract
from~\eqref{eq:Cp_mult} a rational expression for $\Mp_3$ in terms of $\{\rho,L_0,L_3\}$. Injecting the expressions
of $\Mp_3,G_2,G_3,E_2,E_3$ into $p_{\ell}$, we obtain a rational expression for $p_{\ell}$ that exactly matches the one
of $\alpha$ given above. Hence $p_{\ell}=\alpha$. 
\end{proof}
\index{f@\textbf{vertex-labeled cubic planar graphs} (replace $C$ by $M$ for multigraphs)!q-comp@\ttt{$\kC_n^{(q)}$}{of $\kC_n$ with a marked 3-connected component with $q$ vertices}}
\index{f@\textbf{vertex-labeled cubic planar graphs} (replace $C$ by $M$ for multigraphs)!q-comp-rand@\ttt{$\cnq$}{uniform in $\kC_n^{(q)}$}}
\index{f@\textbf{vertex-labeled cubic planar graphs} (replace $C$ by $M$ for multigraphs)!q-random@\ttt{$\cq$}{generated by substituting the edges of a uniform 3-connected graph of size $q$}}
Similarly as for cubic graphs, for two integers $n,q$, we let $\kM_n^{(q)}$ be the set of connected cubic planar multigraphs of size $n$ having a marked 3-connected component of size $q$, and let $\mnq$ be a uniformly random element in  $\kM_n^{(q)}$. 
And we denote by $\mq$ the random connected cubic planar multigraph with a marked 3-connected component $\mathrm{K}$ of size $q$, each edge of $\mathrm{K}$ being independently replaced by a cubic network drawn under the \emph{critical Boltzmann distribution} on $\kD$, i.e., the probability distribution 
$\mu^{(\kD)}$ ($\kD$ denoting here the family of cubic networks allowing for double edges and loops) defined as  
\begin{equation}
\mu^{(\kD)}(\mathrm{d})=\frac{\rho^{|\mathrm{d}|}}{(2|\mathrm{d}|)!D(\rho)}\ \ \ \forall\ \mathrm{d}\in\kD,
\end{equation}
where $\rho$ is the radius of convergence of $D(z)$. As before, $\mnq$ is $\mq$ conditioned to have size~$n$. 
By a very similar analysis as in Section~\ref{sec:marked_3c}, when $n,q$ are related by $q=\alpha n+O(n^{2/3})$, the probability that $\mq$ has size $n$ is $\Theta(n^{-2/3})$; and for $\epsilon>0$, 
 a.a.s. every cubic network attached at the marked 3-connected component has size smaller than $n^{2/3+\epsilon}$. Moreover, since $\alpha>1/2$, for $q=\alpha n+O(n^{2/3})$ and $n$ large enough, $\mnq$ coincides with the random cubic connected planar multigraph of size $n$ 
 conditioned to have its $3$-connected core of size $q$.

\section{First results about the metric properties of random cubic planar graphs}\label{sec:bounds}

\subsection{Pole-distance in cubic networks under the critical Boltzmann distribution}\label{sec:pole_dista}

\index{h@\textbf{cubic networks}!poledist@\ttt{$\delta(N)$}{pole distance}}
\index{h@\textbf{cubic networks}!poledist@\ttt{$\delta(N)$}{pole distance}!distr@\ttt{$\nustar$}{its distribution under the critical Boltzmann distribution}}
For $\network$ a cubic network, the \emph{pole-distance} of $\network$ is the distance between its two poles, it is denoted $\delta(N)$. We use the notation $\kD$
for the class of networks in both contexts (simple cubic networks, labeled at non-pole vertices, or cubic networks allowing for loops and multiedges, labeled at half-edges on plain edges). 
Let $\nustar$ be the distribution of $\delta(N)$, where $\network$ is drawn under $\mu^{(\kD)}$ (critical Boltzmann distribution on $\kD$).  
Our aim here is to show that $\nustar$ has an exponential tail, 
both in the case of graphs and multigraphs (the reason to show this is that $\nustar$ will be the law of edge-lengths to be considered on the $3$-connected core, and
our subsequent analysis of modified distances in $3$-connected cores assumes the law of edge-lengths to have exponential tails).

\begin{lemma}\label{lem:nustar}
There exist constants $a_\star,b_\star>0$ such that, for $X$ drawn under $\nustar$, 
\[
\mathbb{P}(X>k)\leq a_\star\exp(-b_\star\, k)\ \ \mathrm{for\ all\ }k\geq 0.
\]
\end{lemma}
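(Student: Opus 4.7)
My plan is to prove the equivalent statement that the moment-generating function $\phi(t) := \mathbb{E}_{N\sim \mu^{(\kD)}}[t^{\delta(N)}]$ is finite for some $t>1$; Markov's inequality $\mathbb{P}(\delta(N)>k)\leq \phi(t)\,t^{-k}$ then gives the desired exponential bound with $a_\star = \phi(t)$ and $b_\star = \log t$. To achieve this, I would derive a self-consistent recursive inequality for $\phi(t)$ based on the decomposition of Lemma~\ref{lem:dec_network}, and argue that for $t$ slightly above $1$ the inequality admits a finite solution by a continuity/fixed-point argument starting from the trivial identity $\phi(1)=1$.

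\textbf{Recursion by root type.} Conditioning on the type of the root of the decomposition tree of $N$, and using that the Boltzmann law factorises over sub-networks, I would express $\phi(t)$ as a weighted sum of conditional MGFs over the five root types. The trivial and $L$ cases contribute $t$ and $t^2$ respectively, since $\delta$ is deterministic there. For an $S$-network $N=N_1+N_2$ (with $N_1$ Boltzmann non-$S$ non-trivial and $N_2$ Boltzmann non-trivial, independent), the identity $\delta(N)=\delta(N_1)+\delta(N_2)-1$ gives a conditional MGF bounded by $C\,t^{-1}\phi(t)^2$ for an explicit constant $C$ coming from the renormalisation of the subfamily probabilities. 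For a $P$-network obtained by substituting the two edges of the double-edge network by $N_1,N_2$, the identity $\delta(N)=2+\min(\delta(N_1),\delta(N_2))$ together with the crude bound $t^{\min(a,b)}\leq t^a$ yields $\phi_P(t)\leq t^2\phi(t)$. For an $H$-network with underlying polyhedral graph $\oN$ and root edge $e_0=uv$, one has $\delta(N)=2+d^*(u,v)$, where $d^*$ is the shortest-path distance in $\oN\setminus\{e_0\}$ with each plain edge $e$ weighted by the pole-distance $\delta(N_e)$ of the network substituted at~$e$.

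\textbf{Bounding $\phi_H$ and closing the recursion.} The crucial new bound concerns $\phi_H$. Let $f_1,f_2$ be the two faces of $\oN$ adjacent to $e_0$; going around the smaller of them provides a path from $u$ to $v$ in $\oN\setminus\{e_0\}$ of length $L_0:=\min(|f_1|,|f_2|)-1$ made of plain edges whose substituted networks are conditionally iid from $\mu^{(\kD)}$. This yields the stochastic domination $d^*(u,v)\leq \sum_{i=1}^{L_0}X_i$ with $(X_i)$ iid draws from $\nustar$, independent of $L_0$, and hence $\phi_H(t)\leq t^2\,\mathbb{E}[\phi(t)^{L_0}]$. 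The main technical input I will need is that $L_0$ has exponential tails under the Boltzmann polyhedral distribution, uniformly in the size of $\oN$; this should follow from singular analysis of the series $K(z)$ with a face of size $\geq k$ marked adjacent to the root, or via the duality with simple triangulations (where the joint distribution of the two endpoint-degrees of the root edge has exponential tails by standard arguments). Granted this, $\mathbb{E}[\phi(t)^{L_0}]$ is finite whenever $\phi(t)$ lies below $e^{b_0}$, for $b_0>0$ the exponential rate of $L_0$. Combining all the bounds gives an inequality of the form $\phi(t)\leq F(t,\phi(t))$ with $F(1,1)=1$, and continuity together with strict monotonicity in both arguments near $(1,1)$ produce a finite solution $\phi(t)$ for $t\in[1,1+\epsilon]$ by a standard fixed-point argument. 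The multigraph case is handled identically using~\eqref{eq:syst_D_multigraphs} instead of~\eqref{eq:syst_D}. The main obstacle is therefore Step~3: proving the uniform exponential tail of $L_0$, which requires genuine input on the Boltzmann polyhedral distribution; all the remaining steps are routine combinations of these bounds.
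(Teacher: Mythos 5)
Your approach is genuinely different from the paper's. You work directly with the pole-distance $\delta$ via its moment-generating function $\phi(t)$ and a recursive \emph{inequality}, whereas the paper introduces an auxiliary parameter $\chi\ge\delta$ which is exactly additive along the decomposition tree (for the $H$-case, $\chi$ sums the $\chi$-values of the networks at all \emph{outer} edges), so that the bivariate generating function $D(z,u)$ with $u$ marking $\chi$ satisfies an \emph{exact} system of equations; the conclusion then follows from singularity analysis of this system, the decisive input being Brown's explicit expression of $T(x,v)$ with $v$ marking the root degree. Both arguments share the key structural idea: to cross the $3$-connected core one can walk around one of the two faces adjacent to the root edge of $\oN$, and the degree of such a face has an exponential tail under the Boltzmann polyhedral distribution (equivalently, the degree of a root endpoint of a Boltzmann simple triangulation). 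You correctly identify the exponential tail of $L_0$ as the non-trivial input, and the route you suggest (singular analysis of $T(x,v)$) is exactly what the paper uses.

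However, there is a genuine gap in your fixed-point step. You claim the inequality takes the form $\phi(t)\le F(t,\phi(t))$ with $F(1,1)=1$, but the bounds you describe produce $F(1,1)>1$. Indeed, writing $\phi_S(t)\le C\,t^{-1}\phi(t)^2$ forces $C=1/(p_1p_2)>1$, where $p_1,p_2$ are the Boltzmann masses of the conditioning subclasses; and even your $P$-bound $\phi_P(t)\le t^2\phi(t)$ fails in the simple-graph case, since the substituted networks are Boltzmann-conditioned to be non-trivial and that conditional MGF exceeds $\phi(t)$ for $t>1$. With $F(1,1)>1$ there is no fixed point of $y=F(1,y)$ at $y=1$, so the continuity argument near $(1,1)$ does not close, and whether a finite fixed point $y^*>1$ exists depends on the actual constants (which your scalar bound does not control). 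To repair this you would need either to set up a \emph{vector} fixed-point for the conditional MGFs $(\phi_L,\phi_S,\phi_P,\phi_H)$ so that the system is exact at $t=1$, together with a truncation-by-height argument to justify the bootstrap --- which is, in effect, what the paper does cleanly in generating-function form by passing to $\chi$ so that the equation-system \eqref{eq:Dzu} holds exactly --- or to abandon the inequality route for an exact one.
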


\begin{proof}
Similarly as in ~\cite[Lem.5.6]{chapuy2015diameter}, the approach is to define another parameter $\chi$ on cubic networks that dominates $\delta$ and is easier to track along the network decomposition, and to  show via generating functions that $\chi$ has an exponential tail. 
For $N\in \kD$, $\chi(N)$ is defined as follows:
\begin{itemize}
\item
if $\network$ is the trivial network then $\chi(N)=1$,
\item
if $\network$ is of \typel then $\chi(N)=2$,
\item
if $\network$ is of \typer, of the form $\network_1+\network_2$ (with $\network_1$ a not-\labr network), then 
$\chi(N)=\chi(\network_1)+\chi(\network_2)$,
\item
if $\network$ is of \typem, with $\network_1$ and $\network_2$ the two networks substituted at each edge, then $\chi(N)=\chi(\network_1)+\chi(\network_2)+2$,
\item
if $\network$ has a 3-connected core $K$, and if $\network_1,\ldots,\network_k$ are the cubic networks substituted at the outer edges of $K$, then $\chi(N)=2+\chi(\network_1)+\cdots+\chi(\network_k)$.  
\end{itemize}
It is easy to check recursively along the decomposition that $\chi(N)\geq \delta(N)$ for any $N\in\kD$. 

For each family $\kF$ of networks, we let $F(z,u)$ be the bivariate generating function (with $z$ for the size and $u$ for the parameter $\chi$). 
We also write $T(z,v)$ for the bivariate generating function of rooted simple triangulations, with $v$ for the root degree minus $1$. By Remark \ref{rk:3conn}, $T(z,v)/2$ is the bivariate (exponential) generating function of 3-connected edge-rooted cubic planar graphs, with $v$ conjugate to the number of outer edges minus $1$. 
 For the case of graphs, the network decomposition gives 
\begin{equation}\label{eq:Dzu}
\left\{
\begin{array}{rcl}
D(z,u)&=&u + L(z,u)+S(z,u)+P(z,u)+H(z,u),\\
L(z,u)&=&u^2L(z),\\
S(z,u)&=&(D(z,u)-u-S(z,u))(D(z,u)-u),\\
P(z,u)&=&\frac{1}{2}u^2(D(z,u)^2-1),\\
H(z,u)&=&u^2\frac1{2D(z)}T(zD(z)^3,D(z,u)/D(z)),
\end{array}
\right.
\end{equation}
Hence, noting that $S(z,u)=\frac{(D(z,u)-u)^2}{D(z,u)-u+1}$, the series $D(z,u)$ is solution of the equation in $y$
\begin{equation}\label{eq:G}
A(z,u,y)=0,
\end{equation}
where
\[
A(z,u,y)=-y+u+u^2L(z)+\frac{(y-u)^2}{y-u+1}+\frac1{2}u^2(y^2-1)+\frac{u^2}{2D(z)}T(zD(z)^3,\frac{y}{D(z)}).
\]

We are now going to prove the property that there exists $u_0>1$ such that $D(\rho,u_0)$ converges; this will prove the lemma, since for $\network$ drawn under $\mu^{(\kD)}$ one has 
\[\mathbb{P}(\delta(N)\geq k)\leq \mathbb{P}(\chi(N)\geq k)=\sum_{r\geq k}\frac{[u^r]D(\rho,u)}{D(\rho)}\leq \frac{D(\rho,u_0)}{D(\rho)}u_0^{-k}.\]
What we will obtain more precisely is that, in a complex neighborhood of $(\rho,1)$ (with $\rho$ the radius of convergence of $D(z)$), we have a singular expansion for $D(z,u)$ of the form
\[
D(z,u)=\widetilde{D}(Z,U),\ \ \mathrm{with}\ Z=\sqrt{1-z/\rho}\ \mathrm{and}\ U=u-1,
\]
where $\widetilde{D}(Z,U)$ is analytic around $(0,0)$ and satisfies $[Z^1]\widetilde{D}(Z,U)=0$, and $[Z^2]\widetilde{D}(Z,0)\neq 0$; and moreover we will check that $\rho$ is the radius of convergence of $z\to D(z,u)$ for $u\in\R$ around~$1$. 

To prove that, we first obtain a similar singular expansion for $T(x,v)$. An explicit expression is given in~\cite{BrownTriangulations}:
\[
T(x,v)=\frac{1}{2vx}\Big(-2v^2x^2+vx\eta(1+2(1-\eta))-(1-\eta)\eta^3-(\eta vx-(1-\eta)\eta^3)\sqrt{1-4vx/\eta^2}\Big),
\] 
where $\eta\equiv \eta(x)$ is the algebraic series given by $x=\eta^3(1-\eta)$, which has an expansion of the form $\eta(x)=\widetilde{a}(x)+\widetilde{b}(x)\cdot(1-x/\xc)^{3/2}$, where $\xc=27/256$, and $\widetilde{a}(x)$ and $\widetilde{b}(x)$ are analytic around $\xc$, and $\eta(\xc)=\widetilde{A}(\xc)=3/4$. Plugging this expansion into the expression of $T(x,v)$, one obtains an expansion of $T(x,v)$ around $(\xc,1)$ of the form
\[
T(x,v)=\widetilde{T}(X,V),\ \ \mathrm{with}\ X=\sqrt{1-x/\xc}\ \mathrm{and}\ V=v-1,
\]   
where $\widetilde{T}(X,V)$ is analytic around $(0,0)$ and satisfies $[X^1]\widetilde{T}(X,V)=0$, and $[X^2]\widetilde{T}(X,0)\neq 0$. 
Next, we have (from~\cite{BKLM,noy2020further}) that $L(z)$ and  $D(z)$ are of the form 
$f(z)=\widetilde{f}(Z)$ with $\widetilde{f}(Z)$ analytic around $0$ and satisfying $\widetilde{f}'(0)=0$ 
and $\widetilde{f}''(0)\neq 0$; and moreover $\rho D(\rho)^3=\xc$. Plugging these expansions into~\eqref{eq:G} (see~\cite[Sec.2.6]{drmota2014maximum} for a similar extraction) we can extract an  expansion for $D(z,u)$ of the form $D(z,u)=\widetilde{D}(Z,U)$, where $\widetilde{D}(Z,U)$ is analytic around $(0,0)$ and satisfies $[Z^1]\widetilde{D}(Z,U)=0$, and $[Z^2]\widetilde{D}(Z,0)\neq 0$.

Taking $Z=0$, this ensures that the univariate function $u\mapsto D(\rho,u)$ is analytic at~$1$. Letting $a_{n,m}:=[z^nu^m]D(z,u)$, we have $D(\rho,u)=\sum_{n,m}a_{n,m}\rho^nu^m$ for $u\leq 1$ (because $\sum_{n,m} a_{n,m} \rho^n=D(\rho)<\infty$). The analyticity of $u\mapsto D(\rho,u)$ at $u=1$ then ensures that the convergence of the above sum extends for $u\leq u_0$ for some $u_0>1$. Hence, there exists $u_0>1$ such that $D(\rho,u_0)$ converges.

The proof for the case of multigraphs is completely similar (the equation-system is the same, except for $P(z,u)=\frac{1}{2}u^2D(z,u)^2$). 
\end{proof}

\subsection{A first bound on the diameter}

\index{e@\textbf{maps and graphs}!diameter@\ttt{$\mathrm{Diam}(\mg)$}{diameter of $\mg$}}
For later purpose, we need to establish a first bound on the diameter of random connected cubic planar graphs (resp. multigraphs), 
which is the analogue of the one for random planar graphs obtained
in~\cite{chapuy2015diameter}. We say that a property $A_{n,\epsilon}$ related to a random object of size $n$, and also formulated in terms of a real parameter $\epsilon>0$, holds \emph{a.a.s. with exponential rate}
\index{aasexpo@convergence a.a.s. with exponential rate}
 if there are positive constants $a,b,c$ such that $P(\mathrm{not}\ A_{n,\epsilon})\leq a\,\exp(-b\,n^{c\epsilon})$ for all $n\geq 0$ and $\epsilon>0$ small enough.  The \emph{diameter} of a graph $\mg$, denoted $\mathrm{Diam}(\mg)$, is the maximum of the pairwise distances between vertices in $\mg$. 

The purpose of this section is to prove the following result:
\begin{proposition}\label{prop:diam}
Let $\cn$ (resp. $\mn$) be the random connected cubic planar graph (resp. multigraph) on $2n$ vertices. Then $\mathrm{Diam}(\cn)\leq n^{1/4+\epsilon}$ (resp. $\mathrm{Diam}(\mn)\leq n^{1/4+\epsilon}$) a.a.s. with exponential rate.
\end{proposition}

As a first step, we establish the bound in the 3-connected case (starting in the dual setting of random simple triangulations).

\index{t@\textbf{rooted simple planar triangulations}!tglrand@\ttt{$\tn$}{uniform on $\kT_n$}}
\begin{lemma}\label{lem:diam_Tn}
Let $\tn$ be the random rooted simple triangulation with $n+2$ vertices.  Then $\mathrm{Diam}(\tn)\leq n^{1/4+\epsilon}$ a.a.s. with exponential rate.
\end{lemma}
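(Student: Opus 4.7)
The plan is to establish this bound by adapting to simple triangulations the exponential-rate tail estimates for the diameter that are by now classical for random triangulations of the sphere, in the spirit of the arguments used by Chapuy, Fusy, Giménez and Noy in~\cite{chapuy2015diameter}. The intuitive picture is that in $\tn$ distances are typically of order $n^{1/4}$, so a deviation up to $n^{1/4+\epsilon}$ should cost a stretched-exponential factor in some positive power of $n^{\epsilon}$.

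I would start by fixing the root-vertex $v_0$ of $\tn$ and analyzing the hull $B_r(v_0)$ of radius $r$, i.e.\ the submap obtained by adding all vertices at distance $\leq r$ together with the finite complementary components of the resulting set. Via a Krikun-style skeleton decomposition, which for simple triangulations should be set up using the quasi-simple triangulation framework sketched in the introduction and developed in Section~\ref{sec:modDistances}, the perimeters and volumes of the successive hulls admit the same branching-process encoding as for general triangulations. From this one can extract, by now standard, distributional asymptotics: $|B_r(v_0)|$ is typically of order $r^4$, together with tail bounds of the form
\[
P\bigl(B_r(v_0) \neq V(\tn)\bigr) \leq a_0 \exp\!\bigl(-b_0\, n^{c_0\epsilon}\bigr)
\]
for $r = n^{1/4+\epsilon}$ and some constants $a_0,b_0,c_0>0$, which is exactly the statement that no vertex is too far from the root.

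Then the bound is transferred from the root-vertex to an arbitrary vertex by a union bound over the $n+2$ vertices of $\tn$ (each vertex is the root of a re-rooted copy of $\tn$, and the polynomial factor $n+2$ is absorbed by the stretched-exponential tail). This yields
\[
P\bigl(\mathrm{Diam}(\tn) > 2 n^{1/4+\epsilon}\bigr) \leq (n+2)\,a_0\exp\!\bigl(-b_0\, n^{c_0\epsilon}\bigr),
\]
which is still a.a.s.\ with exponential rate (after adjusting $c_0$).

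The main obstacle is that the Krikun skeleton decomposition does not extend verbatim to simple triangulations, since one must forbid the configurations that would create multiple edges or loops and such forbidden configurations break the nice branching structure. The quasi-simple triangulation framework of Section~\ref{sec:modDistances} is the cleanest way around this, but since it is developed only later in the paper, a more self-contained alternative would be to appeal to the scaling-limit convergence of simple triangulations proved by Addario-Berry and the first author in~\cite{SimpleTrig}, whose proof in fact provides quantitative radius/volume tail estimates of the type needed here, from which the diameter bound follows by the same union-bound argument.
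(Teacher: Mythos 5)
Your high-level plan (control the two-point distance, then union-bound) matches the paper's, but the engine you propose to drive the two-point tail estimate is genuinely different from what the paper does, and as currently stated it leaves the crucial quantitative step unproved.

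The paper does not use a skeleton/hull/branching-process argument at all for this lemma. Instead, it injects $\kT'_{n-1}$ (simple triangulations with a marked vertex) into $\kQ_{n,1}$ (quasi-simple triangulations of the $1$-gon) and then invokes an \emph{explicit generating-function} formula of \cite{albenque2014symmetric}: the series $G^{(i)}(x)$ counting quasi-simple triangulations with root vertex and marked vertex at distance $i$ has the closed form $G^{(i)}(x) = Y^i(x)\,R(Y^i,Y,S)$, where $Y(x)$ has a quartic singularity with $Y(x_n) = 1 - (24/n)^{1/4} + o(n^{-1/4})$ at $x_n = \tfrac{27}{256}(1-1/n)$. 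Raising $Y(x_n)$ to the power $i = n^{1/4+\epsilon}$ immediately yields $O(\exp(-b n^\epsilon))$, and a Markov/saddle-point bound $[x^n]G^{(i)}(x)\le G^{(i)}(x_n)x_n^{-n}$ gives the exponential-rate tail for the two-point function with essentially no probabilistic machinery. That explicit formula is the whole argument; the quasi-simple framework of Section~\ref{sub:quasiSimpleDef} is used only for the counting (to relate $\kT'_n$ to $\kQ_{n,1}$), not via hulls or branching.

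Your first route — skeleton decomposition, hull volume/radius estimates, then a tail bound of the form $P(B_r(v_0)\neq V(\tn))\le a_0\exp(-b_0 n^{c_0\epsilon})$ for $r=n^{1/4+\epsilon}$ — is plausible but the displayed tail estimate is precisely the hard part and you take it for granted. The perimeter process in Krikun's decomposition is (reverse-)critical, so its survival probability at depth $r$ decays only polynomially; to get a stretched-exponential tail at scale $n^{1/4+\epsilon}$ you would need to quantify how conditioning on total volume $n$ suppresses unusually tall skeletons, uniformly over the skeleton shape, and carry out the transfer from the local (UIPT-type) picture back to the finite $\tn$. This is doable (it is broadly the Curien–Le~Gall strategy) but it is a substantial argument, not a corollary of "same branching process as general triangulations," and you would still need the simple-to-quasi-simple transfer that the paper handles via its injection and the $\Theta(1)$ probability of being in its image. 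Your second suggestion — quoting quantitative radius tails from \cite{SimpleTrig} — is not substantiated: that paper proves GHP convergence, and while its labeled-tree bijection does control label tails, the specific a.a.s.-with-exponential-rate diameter estimate needed here is not stated there and would need to be extracted. In short, your outline identifies a reasonable alternative path, but both variants rely on unproven quantitative estimates, whereas the paper closes the gap cleanly with Brown's explicit two-point generating function.

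One minor inaccuracy: if you can indeed show $P(B_{n^{1/4+\epsilon}}(v_0)\neq V(\tn))$ is exponentially small, then $\mathrm{Diam}(\tn)\le 2n^{1/4+\epsilon}$ follows by the triangle inequality and no union bound over vertices is needed; the union bound is required only in the paper's two-point-function approach, where one controls the distance from the root to a \emph{single} marked vertex and therefore must union over the $O(n^2)$ choices of root and marked vertex.
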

\begin{proof}
\index{t@\textbf{rooted simple planar triangulations}!tglpoint@\ttt{$\kT_n'$}{of $\kT_n$ with a marked vertex distinct from the root vertex}}
\index{i@\textbf{quasi-simple triangulations of the $p$-gon}!a@\ttt{$\kQ_{n,p}$}{with $n$ inner vertices}}
\index{i@\textbf{quasi-simple triangulations of the $p$-gon}!of the $1$-gon!gf@\ttt{$G$}{g.f. with respect to $\# V-2$}}
\index{i@\textbf{quasi-simple triangulations of the $p$-gon}!of the $1$-gon!gfi@\ttt{$G^{(i)}$}{g.f. with respect to $\# V-2$ when the root and marked vertex are at distance $i$}}
\index{i@\textbf{quasi-simple triangulations of the $p$-gon}!of the $1$-gon!rand@\ttt{$\qn^{(1)}$}{uniform in $\kQ_{n,1}$}}
\index{i@\textbf{quasi-simple triangulations of the $p$-gon}!of the $1$-gon!randL@\ttt{$L_n$}{distance between root vertex and marked vertex in $\mathfrak{q}^{(1)}_{n-1}$}}
Let $\kT_n'$ be the set of rooted triangulations with a marked vertex (distinct from the root vertex), having $n+2$ vertices. 
We use the fact (to be detailed in Section~\ref{sub:quasiSimpleDef}, and illustrated in Figure~\ref{fig:exRoot1gon}) that 
$\kT_{n-1}'$ injects into the set $\kQ_{n,1}$ of so-called quasi-simple triangulations of the 1-gon having $n$ inner vertices, in such a way that the 
underlying metric space is preserved; and that both $|\kQ_{n,1}|$ and $|\kT_n'|$ are 
$\Theta((256/27)^nn^{-3/2})$.   
Let $G(x)$ be the counting series of quasi-simple triangulations of the 1-gon with respect to the number of vertices minus $2$. For $i\geq 1$, let $\Gi(x)$ be the counting series of quasi-simple triangulations of the 1-gon where the root vertex and the marked vertex are at distance $i$, counted with respect to the number of vertices minus $2$. Let $\qn^{(1)}$ be a uniformly random map in $\kQ_{n,1}$. 
 Denoting by $L_n$ the distance between the root vertex and the marked vertex in $\mathfrak{q}^{(1)}_{n-1}$, we have
\[
\mathbb{P}(L_n=i)=\frac{[x^n]\Gi(x)}{[x^n]G(x)}.
\]
An explicit expression\footnote{The expression is given for symmetric simple triangular $k$-dissections of order $k\geq 3$ such that the central vertex is at distance $i$ from the outer boundary; the quotient of these dissections by the rotation of order $k$ is precisely a quasi-simple triangulation whose root-vertex
is at distance $i$ from the marked vertex.} for $\Gi(x)$ is given in~\cite[Prop.5.3]{albenque2014symmetric}.  
Precisely, there exists an (algebraic) counting series $Y(x)$, with a quartic dominant singularity at $27/256$, 
 \[
 Y(x)\sim 1-24^{1/4}(1-256\,x/27)^{1/4},
 \]
 such that $\Gi(x)$ is expressed as 
 \[
 \Gi(x)=Y^i(x)\cdot R(Y^i(x),Y(x),S(x)),
 \]
 where $S(x)=\sqrt{Y(x)^2+10Y(x)+1}$, and where $R(z_1,z_2,z_3)$ is an explicit rational expression that is continuous and positive at $(z_1=0, z_2=1,z_3=\sqrt{12})$. Let $x_n=27/256\cdot(1-1/n)$. Then $Y(x_n)=1-(24/n)^{1/4}+o(n^{-1/4})$. Hence, for $\epsilon>0$ and $i\geq n^{1/4+\epsilon}$, we
 have $Y^i(x_n)=O(\exp(-b\,n^{\epsilon}))$, where $b$ is any positive constant smaller than $24^{1/4}$ (e.g. $b=24^{1/4}/2$). Given the expression of $\Gi(x)$, we also have
  $\Gi(x_n)=O(\exp(-b\,n^{\epsilon}))$. Hence, for $i\geq n^{1/4+\epsilon}$, we have
  \[
  [x^n]\Gi(x)\leq \Gi(x_n)x_n^{-n}=O\big(\exp(-b\,n^{\epsilon})(256/27)^n\big).
  \]
Since $[x^n]G(x)=|\kQ_{n,1}|=\Theta((256/27)^n n^{-3/2})$, we conclude that $\mathbb{P}(L_n=i)=O(n^{3/2}\exp(-b\,n^{\epsilon}))$
uniformly over $i\geq n^{1/4+\epsilon}$, and thus
\[
\mathbb{P}(L_n\geq n^{1/4+\epsilon})=O\big(n^{5/2}\exp(-b\,n^{\epsilon})\big)=O\big(\exp(-n^{\epsilon/2})\big),
\]
so that $L_n\leq n^{1/4+\epsilon}$ a.a.s. with exponential rate. 
Let $\tn$ be the uniform random rooted simple triangulation with $n+2$ vertices. 
Let $\tilde{L}_n$ be the distance between the root vertex and a random vertex (distinct from the root vertex) in $\tn$. 
Note that $\tilde{L}_n$ is distributed as $L_n$ conditioned on the event that the triangulation associated with $\mathfrak{q}_{n-1}$ is simple (via the 
mapping described in Section~\ref{sub:quasiSimpleDef}). Since that event has probability $\Theta(1)$, 
we conclude that $\tilde{L}_n\leq n^{1/4+\epsilon}$ a.a.s. with exponential rate.  Since the number of choices for a root and a marked vertex is $O(n^2)$, the union-bound ensures that $\mathrm{Diam}(\tn)\leq n^{1/4+\epsilon}$ a.a.s. with exponential rate.
\end{proof}

\index{g@\textbf{3-connected cubic planar graphs}!family@\ttt{$\kK_n$}{with $2n$ vertices}}
\index{g@\textbf{3-connected cubic planar graphs}!rand@\ttt{$\kn$}{uniform in $\kK_n$}}
\begin{lemma}\label{lem:diam_K}
Let $\kn$ be the random 3-connected cubic planar graph on $2n$ vertices. Then $\mathrm{Diam}(\kn)\leq n^{1/4+\epsilon}$ a.a.s. with exponential rate.
\end{lemma}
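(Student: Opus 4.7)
The plan is to deduce the result from Lemma~\ref{lem:diam_Tn} via Whitney duality. By Whitney's theorem, $\kn$ admits a canonical embedding as a 3-connected cubic planar map with $n+2$ faces; upon rooting at a uniformly chosen oriented edge, its dual is a rooted simple triangulation with $n+2$ vertices, distributed as $\tn$. Under the primal/dual correspondence, faces of $\kn$ biject with vertices of $\tn$ in a degree-preserving way, and two faces of $\kn$ share an edge if and only if the corresponding vertices of $\tn$ are adjacent. Consequently, the \emph{face-adjacency diameter} of $\kn$ equals $\mathrm{Diam}(\tn)$, which by Lemma~\ref{lem:diam_Tn} is at most $n^{1/4+\epsilon}$ a.a.s.\ with exponential rate.

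I would then convert this face-adjacency bound into a vertex-diameter bound by an elementary detour argument. Given $u,v\in V(\kn)$, pick incident faces $f_u,f_v$ and a face-path $f_u=g_0,\ldots,g_D=f_v$ of length $D$ equal to their face-adjacency distance. Consecutive faces $g_i,g_{i+1}$ share an edge, so starting from any vertex of $\partial g_i$ one can walk along the cyclic boundary of $g_i$ to reach any prescribed vertex of $\partial g_i$ in at most $\lfloor \deg_{\kn}(g_i)/2\rfloor$ steps. Concatenating these detours produces a path in $\kn$ from $u$ to $v$ of length at most $\sum_{i=0}^{D}\lfloor \deg_{\kn}(g_i)/2\rfloor$. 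Using that face-degrees of $\kn$ equal vertex-degrees of $\tn$, this yields
\[
\mathrm{Diam}(\kn)\;\leq\;\bigl(\mathrm{Diam}(\tn)+1\bigr)\cdot \max_{v\in V(\tn)}\deg_{\tn}(v).
\]

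The remaining ingredient is a bound $\max_{v\in V(\tn)}\deg_{\tn}(v)\leq C\log n$ a.a.s.\ with exponential rate. This is reachable by a singularity-analysis computation in the spirit of the proof of Lemma~\ref{lem:diam_Tn}: marking a vertex of degree $\geq k$ in a quasi-simple triangulation of the 1-gon produces a generating series that still has the quartic singularity governed by $Y(x)$ and gains a factor that decays like $\exp(-c\,k)$ at $x=27/256\cdot (1-1/n)$, so the probability that any prescribed vertex of $\tn$ has degree at least $k$ is at most $\exp(-c\,k)$ up to polynomial factors; a union bound over the $n+2$ vertices then yields the $O(\log n)$ bound on the maximum. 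Plugging this into the previous display together with Lemma~\ref{lem:diam_Tn} gives $\mathrm{Diam}(\kn)\leq C\,n^{1/4+\epsilon}\log n$, and renaming $\epsilon$ produces the stated estimate. The main obstacle is establishing the max-degree tail for $\tn$ with the required exponential rate; the duality step and the face-to-vertex-diameter conversion are routine.
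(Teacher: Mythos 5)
Your route matches the paper's: pass to the dual $\tn$ via Whitney's theorem, convert face-paths to vertex-paths to get a bound of the form $\mathrm{Diam}(\kn)\leq(1+\mathrm{Diam}(\tn))\cdot\mathrm{maxDeg}(\tn)$, invoke Lemma~\ref{lem:diam_Tn} for the first factor, and estimate $\mathrm{maxDeg}(\tn)$ by singularity analysis. The differences are cosmetic: you propose marking a large-degree vertex in a quasi-simple triangulation, whereas the paper reads the root-degree tail off the explicit bivariate series $T(x,v)$ of Brown and then applies a re-rooting (union-bound) argument; the two devices are interchangeable here.

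The one genuine slip is a calibration error in the maximum-degree step. You aim for $\mathrm{maxDeg}(\tn)\leq C\log n$ a.a.s.\ \emph{with exponential rate}. But with a prescribed-vertex tail of order $\mathrm{poly}(n)\cdot\exp(-ck)$ and a union bound over the $n+2$ vertices, taking $k=C\log n$ only gives a failure probability of order $\mathrm{poly}(n)\cdot n^{-cC}$, which is polynomial and not of the form $a\exp(-b\,n^{c\epsilon})$ required by the paper's definition of ``a.a.s.\ with exponential rate.'' Since both the lemma you are proving and Lemma~\ref{lem:diam_Tn} itself are stated at exponential rate, a $\log n$ bound at polynomial rate is not usable. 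The fix is to target $k=n^{\epsilon}$ instead: your own estimate then yields $\mathrm{maxDeg}(\tn)\leq n^{\epsilon}$ a.a.s.\ with exponential rate (failure probability $O(\mathrm{poly}(n)\exp(-c\,n^{\epsilon}))$), whence $(1+\mathrm{Diam}(\tn))\cdot\mathrm{maxDeg}(\tn)\leq n^{1/4+2\epsilon}$ a.a.s.\ with exponential rate, and renaming $\epsilon$ gives the lemma. This is exactly the calibration the paper uses.
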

\begin{proof}
Up to choosing one of the two embeddings,  
and choosing a root edge, $\mathrm{Diam}(\kn)$ is distributed as the diameter of the random rooted 3-connected cubic planar map with $2n$ vertices.   
Let $\map$ be a planar map, and let $\map^\dag$ be its dual. 
 Let $v,v'$ be vertices in $\map$, let $f$ (resp. $f'$) be a face incident to $v$ (resp. to $v'$), and let $v_f$ (resp. $v_{f'}$) be the vertex of $\map^\dag$ corresponding
 to $f$ (resp. to $v'$). From any path $\gamma=(v_0=v,\ldots,v_r=v')$ connecting $v$ to $v'$, 
 it is easy to derive a path $\gamma^\dag$ on $\map^\dag$ between $v_f$ and $v_{f'}$ of length at most $\mathrm{deg}(v_0)+\cdots+\mathrm{deg}(v_r)$.   
\index{e@\textbf{maps and graphs}!degmax@\ttt{$\maxdeg(\map)$}{maximal vertex-degree of $\map$}}
\index{e@\textbf{maps and graphs}!degroot@\ttt{$\mathrm{rootDeg}(\map)$}{degree of the root vertex of $\map$}}
 Hence, if $\maxdeg(\map)$ denotes the maximal vertex-degree in $\map$, then we have 
\[
 \mathrm{Diam}(\map^\dag)\leq  (1+\mathrm{Diam}(\map))\cdot \maxdeg(\map).
 \]
 Hence, $\mathrm{Diam}(\kn)$ is stochastically dominated by  $(1+\mathrm{Diam}(\tn))\cdot \maxdeg(\tn)$. Given Lemma~\ref{lem:diam_Tn}, it remains
 to prove that $\maxdeg(\tn)\leq n^{\epsilon}$ a.a.s. with exponential rate. Letting $\mathrm{rootDeg}(\tn)$ 
 be the root degree in $\tn$, we have (with $T(x,v)$ defined in Section~\ref{sec:pole_dista})
 \[
 \mathbb{P}(\mathrm{rootDeg}(\tn)=k)=\frac{[x^nv^k]T(x,v)}{[x^n]T(x,1)}. 
 \]
 
 The explicit expression~\cite{BrownTriangulations} of $T(x,v)$ (recalled in Section~\ref{sec:pole_dista}) guarantees, that there exists $v_0>1$ such that 
 $C:=T(27/256,v_0)$ converges as a sum (any $v_0\in(1,4/3)$ fits).  We thus have $[x^nv^k]T(x,v)\leq C\, (256/27)^nv_0^{-k}$, so that
 \[
 \mathbb{P}(\mathrm{rootDeg}(\tn)=k)=O(n^{5/2}v_0^{-k}), 
 \] 
On the other hand, a standard re-rooting argument ensures that
 \[
 \mathbb{P}(\maxdeg(\tn)=k)\leq 6n\cdot \mathbb{P}(\mathrm{rootDeg}(\tn)=k)=O(n^{7/2}v_0^{-k}), 
 \] 
 which is $O(v_0^{-n^{\epsilon}/2})$ for $k\geq n^{\epsilon}$. 
Hence, $\maxdeg(\tn)\leq n^{\epsilon}$ a.a.s. with exponential rate. 
 \end{proof}

Let $\mg$ be a (non-rooted) cubic connected planar multigraph, and let $\tau(\mg)$ be the associated decomposition-tree. We call \emph{\attr network within $\mg$} one of the cubic networks assembled around an \cycr of components (node of label \labr in $\tau(\mg)$). Note that such a component must be a non-trivial non-\netr. It corresponds to an edge $e$ with a \labr extremity in $\tau(\mg)$, precisely it is the network associated to the subtree hanging from the non-\labr side of $e$.  
We let $\deltr(\mg)$ be the maximal value of $\delta(N)$ over all \attr networks within $\mg$.
 And we let $\xir(\mg)$ be the maximal degree over all \noders of $\tau(\mg)$. 
Similarly, we define a \emph{\attt network} as a network substituted at one of the edges of a \compt of $\mg$.  It corresponds to an edge $e$ with a \labt extremity in $\tau(\mg)$, precisely it is the network associated to the subtree hanging from the other side of $e$. We let $\deltt(\mg)$ be the maximal value of $\delta(N)$ over all \attt networks within $\mg$. And we let $\xit(\mg)$ be the maximal diameter over all 3-connected components of $\mg$. 

Similarly as in~\cite[Eq.(6)]{chapuy2015diameter}, the following bound on the diameter of $\mg$ holds: 
\begin{claim}\label{claim:bound_diam}
We have \[\mathrm{Diam}(\mg)\leq (1+\mathrm{Diam}(\tau(\mg)))\cdot(2+\xir(\mg)\deltr(\mg)+\xit(\mg)\deltt(\mg)).\]
\end{claim}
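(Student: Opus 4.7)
My plan is to build a walk from $u$ to $v$ in $\mg$ by following the decomposition-tree $\tau(\mg)$, in the spirit of~\cite[Eq.~(6)]{chapuy2015diameter}. By Claim~\ref{claim:decomp_tree}, each of $u$ and $v$ belongs to a unique component of type \labl, \labm{} or \labt, corresponding to tree-nodes $X_u,X_v$; let $X_u = X_0, X_1, \ldots, X_r = X_v$ be the unique path between them in $\tau(\mg)$. Its length satisfies $r\leq \mathrm{Diam}(\tau(\mg))$, so it visits at most $1+\mathrm{Diam}(\tau(\mg))$ tree-nodes.

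The walk from $u$ to $v$ is then constructed by concatenating, for each $i$, a sub-walk traversing the component $X_i$ from its entry ``port'' (the interface with $X_{i-1}$, or $u$ itself when $i=0$) to its exit ``port'' (the interface with $X_{i+1}$, or $v$ itself when $i=r$). I would bound the length of each such sub-walk by $2+\xir(\mg)\deltr(\mg)+\xit(\mg)\deltt(\mg)$, via case analysis on the type of $X_i$. When $X_i$ is an \compl or \compm, the component contains at most two vertices of $\mg$ by Claim~\ref{claim:decomp_tree}, so the sub-walk has length at most $2$. When $X_i$ is a \compr, i.e.\ an \cycr of attached networks, the number of networks around the cycle equals the tree-degree of $X_i$ and is at most $\xir(\mg)$; going from the entry port to the exit port around the cycle thus requires crossing at most $\xir(\mg)$ \attr networks, each contributing at most $\deltr(\mg)$ by definition of $\deltr(\mg)$, so the sub-walk has length at most $\xir(\mg)\deltr(\mg)$. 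Finally, when $X_i$ is a \compt with underlying $3$-connected graph $\mathrm{k}$, a shortest path in $\mathrm{k}$ between endpoints of the entry and exit edges has length at most $\mathrm{Diam}(\mathrm{k})\leq \xit(\mg)$, and each edge of $\mathrm{k}$ is substituted in $\mg$ by an \attt network of pole-distance at most $\deltt(\mg)$, giving a sub-walk of length at most $\xit(\mg)\deltt(\mg)$.

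Summing these at most $1+\mathrm{Diam}(\tau(\mg))$ bounds yields the claim. The part I expect to be slightly tedious is the careful bookkeeping of entry and exit ports of consecutive components: the identification of a pole of an attached network with a vertex of the neighboring component must be tracked so that consecutive sub-walks actually chain into a walk in $\mg$, and the starting/ending components must be handled separately because $u$ and $v$ are genuine vertices rather than ports. The additive constant $2$ per component in the bound is built in precisely to absorb the cost of these identifications and of passing through L- and M-components; handling them amounts to a small finite case analysis on the pairs of consecutive node-types permitted by the decomposition grammar (as recalled after Lemma~\ref{lem:dec_network}), with no additional asymptotic cost.
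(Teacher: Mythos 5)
Your proposal is correct and takes essentially the same approach as the paper: both follow the unique path in $\tau(\mg)$ between the components containing $u$ and $v$, and bound the contribution of each visited node by $\xit\deltt$ for T-nodes, $\xir\deltr$ for R-nodes, and $O(1)$ for L/M-nodes. The only cosmetic difference is that you construct an explicit walk bottom-up from the tree-path, whereas the paper starts from a diametral path in $\mg$ and decomposes its edges along the projected tree-path $\gamma'$; the per-node estimates and the use of the additive $2$ to absorb L/M-nodes and inter-component transitions are the same.
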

\begin{proof}
A diametral path $\gamma$ in $\mg$ induces a path $\gamma'$ in $\tau(\mg)$. Some edges on $\gamma$ correspond to edges of $\gamma'$ (where $\gamma$
leaves a component and enters another component).  The other edges of $\gamma$ are ``consumed'' within the nodes on $\gamma'$. 
 For a \nodet $w$ visited by $\gamma$, corresponding to a $3$-connected component $\mathrm{K}$, the length of $\gamma$ consumed within $w$ is bounded by $\mathrm{Diam}(\mathrm{K})\cdot\mathrm{max}_{N\in\mathrm{att}_w}\delta(N)$, where $\mathrm{att}_w$ is the set of \attt networks at $\mathrm{K}$. 
Similarly, for an \noder $w$, of degree $k$, visited by $\gamma$, the length of $\gamma$ consumed within $w$ is bounded by $k\cdot\mathrm{max}_{N\in\mathrm{att}_w}\delta(N)$, where $\mathrm{att}_w$ is the set of \attr networks at $w$. 
Finally, for an \nodem~$w$, at most one edge of $\gamma$ is consumed by $w$ 
(this happens if one of the three edges of the \compm for $w$ is not substituted, and this edge is  
traversed by $\gamma$).  
\end{proof}

In view of Claim~\ref{claim:bound_diam}, in order to conclude the proof of Proposition~\ref{prop:diam}, it just remains to show the following.

\begin{lemma}
For $\cn$ the random connected cubic planar graph with $2n$ vertices, we have 
\[\mathrm{Diam}(\tau(\cn))\leq n^\epsilon,\ \  \deltr(\cn)\leq n^\epsilon,\ \  \deltt(\cn)\leq n^{\epsilon},\ \ \xir(\cn)\leq n^{\epsilon},\ \ \xit(\cn)\leq n^{1/4+\epsilon}\] 
a.a.s. with exponential rate. 

The same holds when replacing $\cn$ by $\mn$. 
\end{lemma}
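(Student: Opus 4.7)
The plan is to establish each of the five bounds by leveraging the Boltzmann-like decomposition from Sections~\ref{sec:largest_3comp}--\ref{sec:marked_3c}, the diameter bound of Lemma~\ref{lem:diam_K}, and the exponential tail of Lemma~\ref{lem:nustar}. By Theorem~\ref{theo:largest3comp}, the $3$-connected core of $\cn$ has size $q = \alpha n + O(n^{2/3})$ a.a.s.\ with exponential rate, so after this conditioning we may work as with $\cnq$ and view every attached network at the core as an i.i.d.\ critical Boltzmann network conditioned on the total size. By Lemma~\ref{lem:cnq}, each such attached network has size at most $n^{2/3+\epsilon}$ a.a.s.\ with exponential rate.

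The bounds on $\deltt(\cn)$ and $\deltr(\cn)$ reduce to a union bound: the entire decomposition contains at most $O(n)$ attached networks, and by Lemma~\ref{lem:nustar} each carries a pole-distance whose tail is exponentially decaying, so the maximum pole-distance is $O(\log n) \leq n^\epsilon$ a.a.s.\ with exponential rate. For $\xir(\cn)$, the identity $S(z) = (D(z)-1)^2/D(z)$ obtained from~\eqref{eq:syst_D} reveals that the arity of an $R$-node is distributed with exponential tail (a geometric structure coming from the repeated factor $(D-1-S)^k$ in the unfolded decomposition of $R$-networks), so the maximum arity over the $O(n)$ many $R$-nodes is $O(\log n) \leq n^\epsilon$. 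For $\xit(\cn) \leq n^{1/4+\epsilon}$, we split the $3$-connected components between the giant core and the smaller ones: the core's diameter is at most $n^{1/4+\epsilon/2}$ by Lemma~\ref{lem:diam_K}, while every other $3$-connected component has size at most $n^{2/3+\epsilon}$ (by Lemma~\ref{lem:cnq} applied recursively) and hence diameter at most $(n^{2/3+\epsilon})^{1/4+\epsilon/4} = O(n^{1/6+O(\epsilon)})$, well below $n^{1/4+\epsilon}$; a union bound over the $O(n)$ components concludes this step.

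The main obstacle is the tree diameter bound $\mathrm{Diam}(\tau(\cn)) \leq n^\epsilon$. Root $\tau(\cn)$ at the node of the giant $3$-connected core, so that its diameter is at most twice its depth (any diametral path splits into two root-to-leaf arms up to an $O(1)$ correction). Each child-subtree corresponds to an attached network of size at most $n^{2/3+\epsilon}$ by Lemma~\ref{lem:cnq}. Within each such subtree, the analogues of Theorem~\ref{theo:largest3comp} and Lemma~\ref{lem:cnq} still apply (they rely only on the singular expansion~\eqref{eq:asympt_form}, which is uniform across the Boltzmann family), so the next level of sub-networks has size at most $(n^{2/3+\epsilon})^{2/3+\epsilon}$, and after $k$ iterations the sub-network sizes are at most $n^{(2/3+\epsilon)^k}$. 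Taking $k = O(\log \log n)$ brings the sub-network size below an absolute constant, at which point the sub-decomposition tree has constant depth. Union-bounding the failure probabilities over the $O(\log \log n)$ recursive levels yields $\mathrm{Diam}(\tau(\cn)) = O(\log \log n) \leq n^\epsilon$ a.a.s.\ with exponential rate.

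Finally, the corresponding bounds for $\mn$ are obtained identically through the parallel decomposition analysis of Section~\ref{sec:mul} (noting in particular that $\alpha > 1/2$ again, so that the giant core is unique).
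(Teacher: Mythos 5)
Your proposal has two fundamental problems that make the argument fail, and the approach diverges substantially from the paper's.

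\paragraph{The conditioning step breaks the exponential rate.} You begin by conditioning on $|X_n^*-\alpha n| = O(n^{2/3})$, claiming it holds ``a.a.s.\ with exponential rate'' by Theorem~\ref{theo:largest3comp}. This is false: Theorem~\ref{theo:largest3comp} is a local limit theorem whose limit density is the Airy-map distribution, which has a \emph{polynomial} (not super-exponential) left tail; the event $X_n^*<\alpha n - Cn^{2/3}$ only has probability $O(C^{-3/2})$ or so. Similarly, Lemma~\ref{lem:cnq} is stated and proved only ``a.a.s.'' (the proof passes through Lemma~\ref{lem:size_core}, which is merely a $\Theta(n^{-2/3})$ local estimate), not with exponential rate. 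Since the lemma you are proving demands the stronger ``a.a.s.\ with exponential rate'' guarantee, you cannot pay a polynomial failure probability at the outset. The paper avoids this entirely: it never conditions on the core size, instead using the fact that an attached network of size $n'$, or a $3$-connected component of size $n'$, is uniform of its size (a size-conditioning under $\mu^{(\kD)}$ costing only a polynomial factor $\Theta(n'^{5/2})$ in the tail bounds, which is absorbed because the base bounds are exponential in $n'^{c\eps}$).

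\paragraph{The tree-diameter argument is wrong.} Your recursive argument claims the attached sub-networks have sizes shrinking like $n^{(2/3+\eps)^k}$ at depth $k$, giving depth $O(\log\log n)$. This cannot work. The geometric shrinking you invoke is specific to an $H$-node (the $K(zD(z)^3)$ critical composition), where a giant $3$-connected core does eat up a linear proportion of the size. But the decomposition tree also has $S$-, $P$- and $L$-nodes. At an $S$-node, the parent's mass is simply partitioned among the chained children, with no guarantee of any shrinkage: a single child can carry essentially all of the parent's size. So after a few levels the sizes need not drop at all, and in fact the height of $\tau(\cn)$ is expected to be of polylogarithmic order, not $O(\log\log n)$. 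The paper handles this with the machinery of~\cite[Sec.5]{chapuy2015diameter}: it rewrites the network system~\eqref{eq:syst_D} as an irreducible, critical positive system $\mathbf y = \mathbf F(z,\mathbf y)$ and verifies that the height of the decomposition tree is a ``height parameter'' for this system, so that~\cite[Lem.5.3]{chapuy2015diameter} applies directly and yields $n^{\eps}$ with exponential rate. This is an essentially different argument and is the genuinely delicate part of the lemma.

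\paragraph{Remaining parts.} The ideas for $\deltr,\deltt$ (exponential tail of $\nustar$ plus a union bound) and for $\xir$ (geometric tail for $R$-node arity, reflecting $\bar S(\rho)<1$) are the right ones, modulo the conditioning issue above; the paper formalizes the $\xir$ bound via the observation that the generating function of graphs with a marked $R$-component of degree $k$ is $\bar S(z)^k/k$ and $\bar S(\rho)\in(0,1)$. Your argument for $\xit$ is essentially correct in spirit, though again phrased through the conditioning; the paper's cleaner route is to split a $3$-connected component of size $n'$ into the trivial case $n'\leq n^{1/4}/2$ and the case $n'\geq n^{1/4}/2$ to which Lemma~\ref{lem:diam_K} applies uniformly, then union-bound.
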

\begin{proof}
We give the proof details for graphs (the arguments for multigraphs are completely similar). 
We start with $\xit(\cn)$. Consider a 3-connected component $\mathrm{K}$ in $\cn$, and let $n'\leq n$ be its size. Then $\mathrm{K}$ is uniformly distributed
over cubic 3-connected planar graphs with $2n'$ vertices. If $n'\leq n^{1/4}/2$, then obviously 
$\mathrm{Diam}(\mathrm{K})\leq n^{1/4+\epsilon}$. If $n'\in[n^{1/4}/2,n]$, then by Lemma~\ref{lem:diam_K} 
there are positive constants $a,b,c$ such that, for $\epsilon>0$ small enough, we have (where Lemma~\ref{lem:diam_K} yields the middle inequality): 
\[
\mathbb{P}(\mathrm{Diam}(\mathrm{K})\geq n^{1/4+\epsilon})\leq \mathbb{P}(\mathrm{Diam}(\mathrm{K})\geq n'\,^{1/4+\epsilon})\leq a\,\exp\big(-b\,n'\,^{c\epsilon})\leq a\,\exp\big(-b\,n^{c\epsilon/4}/2^{c\epsilon}\big).
\]

Hence, uniformly over $n'\in[0,n]$, the diameter of a random $3$-connected cubic planar graph of size $n'$ is smaller than $n^{1/4+\epsilon}$ a.a.s. with exponential rate. Since the number of $3$-connected components in $\cn$ is $O(n)$, by the union-bound we conclude that $\xit(\cn)\leq n^{1/4+\epsilon}$ a.a.s. with exponential rate. 

We now deal with $\xir(\cn)$. Let $\bar{S}(z):=L(z)+P(z)+H(z)$ be the generating function for cubic networks that are eligible to be \attr (i.e., non-trivial non-\labr cubic networks). 
From the 1st and 3rd line in~\eqref{eq:syst_D} (or directly, from the decomposition of \netrs), we get $S(z)=\frac{\bar{S}^2}{1-\bar{S}(z))}$.   
Since $S(z)$ converges at $\rho$, we must have $\bar{S}(\rho)\in(0,1)$. For $k\geq 3$, the generating function of connected cubic planar graphs with a marked \compr of degree $k$ is equal to $\bar{S}(z)^k/k$. Hence, for $k\geq 3$, 
the number of connected cubic planar graphs of size $n$ having a marked \compr of degree $k$ is bounded by $(2n)![z^n]\bar{S}(z)^k$, which itself is bounded
by $(2 n)!\beta^k\rho^{-n}$, where $\beta=\bar{S}(\rho)\in(0,1)$. The number of connected cubic planar graphs of size $n$ having a marked \compr of degree larger than $k$ is thus 
bounded by $(2n)!\frac{\beta^k}{1-\beta}\rho^{-n}$. Since the number of \compr is $O(n)$, the number of connected cubic planar graphs of size $n$ having at least one \compr of degree larger than $k$ is  $O((2n)!n\frac{\beta^k}{1-\beta}\rho^{-n})$
On the other hand, the number of connected cubic planar graphs of size $n$ is $\Theta((2n)!\rho^{-n}n^{-7/2})$. Hence, the probability that $\cn$ 
has an \compr of degree larger than $k$ is $O(n^{9/2}\beta^k)$, which is $O(\beta^{n^{\epsilon}/2})$ for $k\geq n^{\epsilon}$, 
 ensuring that $\xir(\cn)\leq n^{\epsilon}$ a.a.s. with exponential rate.

We now consider the parameter $\deltt(\cn)$. Again, we use the fact that a \attt network of size $n'$ in $\cn$ is uniformly distributed over cubic networks of size $n'$. 
This distribution is also $\mu^{(\kD)}$ conditioned to have size $n'$, an event that occurs with probability $\Theta(n'\,^{-5/2})$ (as follows from the asymptotic form of the counting coefficients for cubic networks). Hence, letting $\mathrm{D}_{n'}$ be the random cubic network of size $n'$, Lemma~\ref{lem:nustar} ensures that there are positive constants $a_\star,b_\star$ such that 
\[
\mathbb{P}(\delta(\mathrm{D}_n)>k)\leq a\,n^{5/2}\exp(-b\,k). 
\]
Hence, $\delta(\mathrm{D}_{n'})\leq n^{\epsilon}$ a.a.s. with exponential rate, uniformly over $n'\leq n$. Since the number of \attt networks in $\cn$ is 
$O(n)$, the union bound ensures that  $\deltt(\cn)\leq n^{\epsilon}$ a.a.s. with exponential rate. The same argument is easily adapted to $\deltr(\cn)$ as well
(an \attr network of size $n'$ is a network drawn under $\mu^{(\kD)}$ conditioned to have size $n'$ and to be a non-trival non-\netr, an event 
that holds with probability $\Theta(n'\,^{-5/2})$).  

We now consider $\mathrm{Diam}(\tau(\cn))$. We first rewrite the system~\eqref{eq:syst_D} in a positive form, where $t_n$ is the number of rooted simple 
triangulations with $2n$ faces. 
\[
\left\{
\begin{array}{ll}
L(z)&=\frac{1}{2}L(z)^2 + \frac1{2}z(S(z)+P(z)+H(z)),\\
S(z)&= \frac{(L(z)+P(z)+H(z))^2}{1-(L(z)+P(z)+H(z))} \\
P(z)&=\frac1{2}z\big( (L(z)+S(z)+P(z)+H(z))\cdot(2+L(z)+S(z)+P(z)+H(z)) \big)  \\
H(z)&= \frac1{2}\sum_{n\geq 2}t_n z^n\big(1+L(z)+S(z)+P(z)+H(z)\big)^{3n-1}.
\end{array}
\right.
\]
This system, of the form $\mathbf{y}=\mathbf{F}(z,\mathbf{y})$, is irreducible.  It is also critical in the sense of~\cite[Sec.5]{chapuy2015diameter}, as the quantities $L'(z),S'(z),P'(z),H'(z)$ converge when $z\in(0,\rho)$ tend to $\rho$. For $h\geq 0$, let $L_h(z),S_h(z),P_h(z),H_h(z)$ be the counting series gathering the contributions of $L(z),S(z),P(z),H(z)$ where the associated decomposition-tree has height at most $h$, and let $\mathbf{y}_h=(L_h(z),S_h(z),P_h(z),H_h(z))$. We clearly have $\mathbf{y}_{h+1}=\mathbf{F}(z,\mathbf{y}_h)$ for $h\geq 0$. Thus, the height of the decomposition-tree is a height-parameter for the system, with the terminology of~\cite[Sec.5]{chapuy2015diameter}. We can then rely on~\cite[Lem.5.3.]{chapuy2015diameter}, which ensures that for a random cubic network in each type $\in\{L,S,P,H\}$,  
 the height of the decomposition-tree is bounded by $n^{\epsilon}$ a.a.s. with exponential rate. Since the diameter of a rooted tree is at most twice its height, 
 we easily conclude (via the decomposition~\eqref{eq:Cp} for pointed cubic connected planar graphs in terms of cubic networks)
 that $\mathrm{Diam}(\tau(\cn))\leq n^{\epsilon}$ a.a.s. with exponential rate.
\end{proof}

\section{Skeleton decomposition for quasi-simple triangulations}\label{sec:quasi_simple}
We introduce in this section a modification of simple triangulations, which will prove to be more convenient to deal with, when studying Krikun's skeleton decomposition. We will indeed prove in Sections~\ref{sub:skeleton} and~\ref{sub:branching} that the skeleton of these so-called \emph{quasi-simple triangulations} admit exactly the same encoding by a branching process as general triangulations. 

\subsection{Simple triangulations}

\index{u@\textbf{rooted simple planar triangulations of the $p$-gon}!a@\ttt{$\kT_{n,p}$}{with $n$ inner vertices}}
For every $n\geq 1$, we write $\kT_n$ be the set of rooted simple triangulations with $n+2$ vertices. Then, for every $p\geq 3$ and $n\geq 0$, let $\kT_{n,p}$ be the set of rooted simple triangulations of the $p$-gon with $n$ inner vertices (that is, simple triangulations whose root face is a simple cycle of degree $p$, with $n$ vertices that are not incident to the root face). Observe in particular that $\kT_n=\kT_{n-1,3}$.
Enumerative formulas for $\kT_{n,p}$ have been obtained by Brown in~\cite{BrownTriangulations} and read: 
\begin{equation}\label{eq:enumSimpleTrig}
|\kT_{n,p}|=\frac{2 (2p-3)!}{(p-1)!(p-3)!}\frac{(4n+2p-5)!}{n!(3n+2p-3)!}\quad\text{for}\quad n\geq 0,\,p\geq 3.
\end{equation}
We deduce from this expression and from Stirling's formula that:
\begin{equation}\label{eq:asymSimpleTrig}
|\kT_{n,p}|\stackrel[n\rightarrow \infty]{}{\sim} C^\star(p)\left(\frac{256}{27}\right)^{n}n^{-5/2},
\end{equation}
with 
\begin{equation}
C^\star(p)=\frac{\sqrt{3}}{64\sqrt{2\pi}}(p-2)\binom{2(p-1)}{p-1}\left(\frac{16}{9}\right)^{p-1}\stackrel[p\rightarrow \infty]{}{\sim} \frac{\sqrt{6}}{64\pi}\sqrt{p}\left(\frac{64}{9}\right)^{p-1} = \frac{9\sqrt{6}}{4096\pi}\sqrt{p}\left(\frac{64}{9}\right)^{p}.
\end{equation}
\index{u@\textbf{rooted simple planar triangulations of the $p$-gon}!gf@\ttt{$T_p$}{generating series, radius of convergence = $27/256$}}
\index{u@\textbf{rooted simple planar triangulations of the $p$-gon}!Z@$Z(p) = T_p\left(\frac{27}{256}\right)$}
Because of the polynomial correction in $n^{-5/2}$ (usual for planar maps), the generating series $T_p(x):=\sum_{n\geq 0} |\kT_{n,p}|x^n$ evaluated at $x=27/256$ converge for every value of $p$. Following \cite[(4.5)]{BrownTriangulations}, we get: 
\begin{equation}\label{eq:defZp}
Z(p):=T_p\left(\frac{27}{256}\right)= \frac{1}{p(2p-3)}\left(\frac{16}{9}\right)^{p-2}\binom{2(p-1)}{p-1}.
\end{equation}
From which we deduce: 
\begin{equation}\label{eq:asymZp}
Z(p)\stackrel[p\rightarrow \infty]{}{\sim} \frac{2}{\sqrt{\pi}}\left(\frac{64}{9}\right)^{p-2}p^{-5/2}=\frac{18}{64\sqrt{\pi}}\left(\frac{64}{9}\right)^{p-1}p^{-5/2}.
\end{equation}

The \emph{critical Boltzmann distribution} on simple triangulations of the $p$-gon is the one assigning probability $(27/256)^n/Z(p)$ to the elements in $\kT_{n,p}$,
for $n\geq 0$.

\subsection{Quasi-simple triangulations: definition and enumeration}\label{sub:quasiSimpleDef}

\begin{figure}[t]
  \begin{minipage}[b]{0.45\linewidth}
   \centering
     \includegraphics[scale=0.8,page=2]{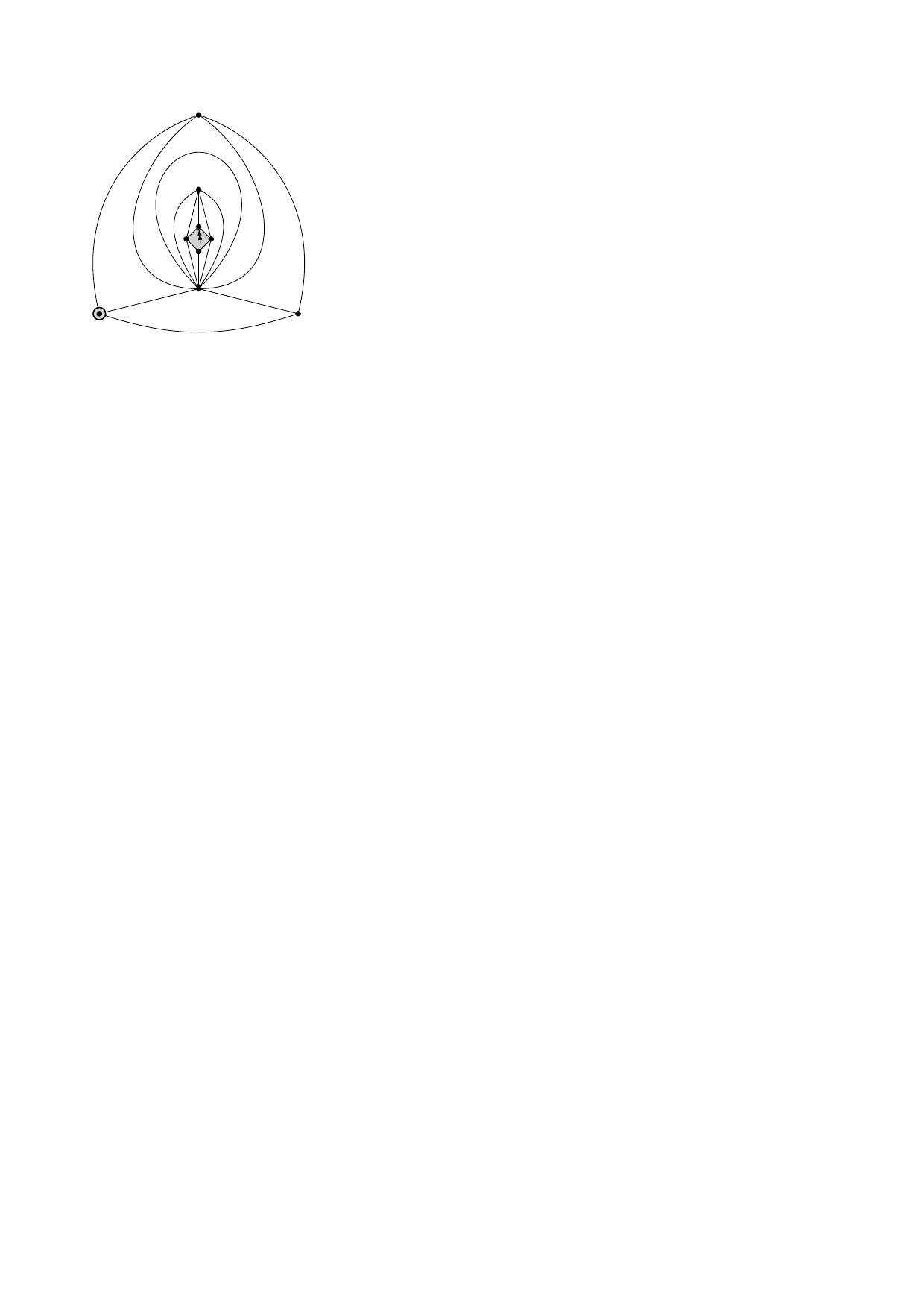}
     \caption{\label{fig:exQuasi}A quasi-simple triangulation of the 4-gon. Separating 1- and 2-cycles are represented in dashed blue edges.\\ The pointed vertex is encircled.}
  \end{minipage}
\hfill
  \begin{minipage}[b]{0.45\linewidth}
   \centering
     \includegraphics[scale=0.5,page=3]{images/quasiSimple.pdf}
     \caption{\label{fig:exRoot1gon}Mapping $\Psi$ that turns a quasi-simple triangulation of the 1-gon into a quasi-simple triangulation of the sphere.\\ The pointed vertex is encircled.}
  \end{minipage}
\end{figure}

\index{i@\textbf{quasi-simple triangulations of the $p$-gon}!a@\ttt{$\kQ_{n,p}$}{with $n$ inner vertices}}
\begin{definition}
For $p\geq 1$, let $\mathrm{q}$ be a triangulation of the $p$-gon with a marked inner vertex $v^\star$. We call $(\mathrm{q},v^\star)$ a \emph{quasi-simple triangulation} if all its 1-cycles and 2-cycles separate the $p$-gon and the pointed vertex, see Figure~\ref{fig:exQuasi}. 
\end{definition}

\begin{remark}
Observe that in a quasi-simple triangulation, each vertex can carry at most one loop, otherwise the two loops would form a 2-cycle that does not contain the pointed vertex. 

\index{i@\textbf{quasi-simple triangulations of the $p$-gon}!aa@\ttt{$\Psi$}{maps quasi-simple triangulations of the $1$-gon to (not necessarily simple) vertex-pointed triangulations of the sphere}}
Therefore, we can turn a quasi-simple triangulation of the 1-gon into a triangulation of the sphere by merging the two edges that form a triangular face with the root edge, rooting the modified map at the merged edge so that the tail vertex is the one incident to the deleted loop, see Figure~\ref{fig:exRoot1gon}. This mapping is denoted $\Psi$. Note 
that $\Psi$ preserves the underlying metric space.

Reciprocally, the set of pointed and rooted simple triangulations of the sphere can be seen as a (strict) subset of quasi-simple triangulations of the 1-gon by opening the root edge and inserting a loop incident to the root vertex inside the 2-gon created. \dotfill
\end{remark}

\medskip

For every $p\geq 1$ and $n\geq 0$, let $\kQ_{n,p}$ be the set of quasi-simple triangulations of the $p$-gon with $n$ inner vertices. Enumerative formulas for $\kQ_{n,p}$ have been obtained by Brown in~\cite{BrownTriangulations} and we list them here for future reference. Precisely (up to a change of variable), 
Brown gives in~\cite[(8.12)]{BrownTriangulations} 
a formula for the number of simple triangulations of a $pr$-gon with a rotational symmetry of order $r\geq 3$ and $rs+1$ inner vertices, which are in bijection with quasi-simple triangulations of the $p$-gon with $s+1$ inner 
vertices,  
up to marking the center of rotation and quotienting.   
For $p\geq 1$ and $n\geq 1$, 
this formula rewrites as:
\begin{equation}\label{eq:qsimpleTrigEnum}
 |\kQ_{n,p}| = \frac{(2p)!(4n+2p-5)!}{(p-1)!p!(n-1)!(3n+2p-3)!}.
\end{equation} 
\index{i@\textbf{quasi-simple triangulations of the $p$-gon}!enum@$C(p)$}
The asymptotic behavior of $|\kQ_{n,p}|$ as $n$ goes to infinity can be deduced directly from this expression. It is given in \cite[(9.1)]{BrownTriangulations} and reads: 
\begin{equation}\label{eq:asymptoticQEnum}
|\kQ_{n,p}|\stackrel[n\rightarrow \infty]{}{\sim} C(p)\left(\frac{256}{27}\right)^{n}n^{-3/2},
\end{equation}
with
\begin{equation}
C(p)=\frac{\sqrt{3}}{32\sqrt{2\pi}}p\binom{2p}{p}\left(\frac{16}{9}\right)^{p-1}\stackrel[p\rightarrow \infty]{}{\sim} \frac{\sqrt{3}}{8\sqrt{2}\pi}\sqrt{p}\left(\frac{64}{9}\right)^{p-1} = \frac{9\sqrt{3}}{512\sqrt{2}\pi}\sqrt{p}\left(\frac{64}{9}\right)^{p}.
\end{equation}
Note the unusual polynomial correction in $n^{-3/2}$, which comes from the fact that quasi-simple triangulations are inherently rooted \emph{and} pointed. We hence retrieve the classical behavior of (non-pointed) planar maps with a polynomial correction in $n^{-5/2}$.

Following the same approach as in the proof of~\cite[Lemma~1]{CurienJFLG}, we obtain from~\eqref{eq:qsimpleTrigEnum} the following bounds for $|\kQ_{n,p}|$: 
\begin{lemma}\label{lem:unifBounds}
There exists a constant $c>0$ (independent of $p$ and $n$) such that, for every $n\geq 1$ and $p\geq 1$, we have: 
\[
    |\kQ_{n,p}| \leq c \cdot C(p) n^{-3/2} \left(\frac{256}{27}\right)^n.
\]
\end{lemma}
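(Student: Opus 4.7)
Plan: The strategy is direct computation from the explicit formula~\eqref{eq:qsimpleTrigEnum} using uniform Stirling estimates, following the spirit of~\cite[Lemma~1]{CurienJFLG}.

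First, factor out $p\binom{2p}{p}=(2p)!/((p-1)!\,p!)$, which is precisely the combinatorial prefactor appearing in $C(p)$, so that~\eqref{eq:qsimpleTrigEnum} rewrites as
\[
|\mathcal{Q}_{n,p}| \;=\; p\binom{2p}{p}\cdot\frac{(4n+2p-5)!}{(n-1)!\,(3n+2p-3)!}.
\]
The lemma then reduces to the claim that, for a universal $c'$ and all $n\geq 2$, $p\geq 1$,
\[
\frac{(4n+2p-5)!}{(n-1)!\,(3n+2p-3)!}\;\leq\; c'\,(16/9)^{p-1}(256/27)^{n}\,n^{-3/2}.
\]
The boundary case $n=1$ is handled separately via $|\mathcal{Q}_{1,p}|=\binom{2p-1}{p-1}=\tfrac12\binom{2p}{p}$, which is controlled since $p(16/9)^{p-1}\geq 1$ for all $p\geq 1$. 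Writing $a=4n+2p-5$, $b=n-1$, $c=3n+2p-3$, apply the two-sided Stirling bounds $\sqrt{2\pi k}(k/e)^k\leq k!\leq \sqrt{2\pi k}(k/e)^k e^{1/(12k)}$ (upper on the numerator, lower on the denominators). Since $-a+b+c=1$, the $e^{-k}$ factors contribute a universal $e$, and the Stirling error terms are absorbed into an absolute constant; everything reduces to a uniform bound on $\sqrt{a/(bc)}\cdot a^a/(b^b c^c)$.

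The crucial step is to control the main term $a^a/(b^b c^c)=(a/b)^b(a/c)^c/a$ so that its $(n,p)$-dependence matches $(256/27)^n(16/9)^{p-1}$ exactly. Starting from the identities $a/b=4(1+p/(2(n-1)))$ and $a/c=(4/3)(1-p/(2(3n+2p-3)))$, together with the elementary calculation $4^{n-1}(4/3)^{3n+2p-3}=(3/16)(256/27)^n(16/9)^{p-1}$, one gets
\[
(a/b)^b(a/c)^c \;=\; \tfrac{3}{16}\,(256/27)^n(16/9)^{p-1}\cdot R(n,p),
\]
where
\[
R(n,p)\;:=\;\left(1+\tfrac{p}{2(n-1)}\right)^{n-1}\left(1-\tfrac{p}{2(3n+2p-3)}\right)^{3n+2p-3}.
\]
A one-line check using $\log(1+x)\leq x$ and $\log(1-y)\leq-y$ gives $\log R(n,p)\leq p/2-p/2=0$, hence $R(n,p)\leq 1$ uniformly in $n,p$. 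Finally, the Stirling polynomial prefactor combines with the $1/a$ to give $(abc)^{-1/2}$, and since $abc\geq(4n-3)(n-1)(3n-1)\gtrsim n^{3}$ uniformly in $p\geq 1$, this contributes at most $K\,n^{-3/2}$; multiplying back by $p\binom{2p}{p}$ yields the claim.

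The main subtlety is the cancellation inside $R(n,p)$. Estimating the two factors separately, for example via $(1+p/(2(n-1)))^{n-1}\leq e^{p/2}$ and $(1-p/(2(3n+2p-3)))^{3n+2p-3}\leq 1$, would leave an unwanted multiplicative factor $e^{p/2}$, which grows without bound in $p$ and so cannot be absorbed into a universal constant. Keeping both logarithmic contributions together, so that the $+p/2$ from the first factor cancels the $-p/2$ from the second, is what produces the tight uniform bound.
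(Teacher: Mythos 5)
Your approach is the right one and is precisely what the paper's one-line reference to Curien--Le Gall's Lemma~1 amounts to: factor out the $p$-dependent prefactor $p\binom{2p}{p}$, apply uniform Stirling bounds, and verify that the $p$-dependent part of the main term stays bounded. The critical observation that the $+p/2$ and $-p/2$ contributions in $\log R$ cancel, so that $R\leq 1$ uniformly rather than growing like $e^{p/2}$, is exactly the point of the exercise; and the treatment of $n=1$ and of the polynomial prefactor via $abc\gtrsim n^3$ is correct.

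There is, however, a small algebraic slip in the two displayed ``identities.'' With $a=4n+2p-5$ and $b=n-1$ one has $a/b=4+(2p-1)/(n-1)$, not $4+2p/(n-1)$; similarly $a/c$ is not exactly $(4/3)(1-p/(2c))$. Both of your formulas are exact for $\tilde a:=a+1=4n+2p-4$ rather than for $a$. The fix is trivial: since $b+c=\tilde a$, one has $(a/b)^b(a/c)^c=(1-1/\tilde a)^{\tilde a}\cdot(\tilde a/b)^b(\tilde a/c)^c\leq e^{-1}\cdot(\tilde a/b)^b(\tilde a/c)^c$, and your cancellation argument applies verbatim to $\tilde a$. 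So the slip only introduces an extra bounded factor $\leq e^{-1}$, which is absorbed in the constant, and the claimed inequality still holds --- but as written the lines around ``Starting from the identities'' and ``one gets'' assert equalities that are false. Rephrase them for $\tilde a=a+1$ (or just as one-sided inequalities) and the proof is airtight.
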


\subsection{Decomposition of quasi-simple triangulations of the 1-gon into simple components}

\index{t@\textbf{rooted simple planar triangulations}!tglpointa@\ttt{$\cTc$}{with a marked inner vertex}}
\index{t@\textbf{rooted simple planar triangulations}!tglpointa@\ttt{$\cTt$}{with a marked inner edge}}
\index{t@\textbf{rooted simple planar triangulations}!tglpointa@\ttt{$\cTct$}{with a marked inner edge incident to the root vertex}}
\index{t@\textbf{rooted simple planar triangulations}!tglpointa@\ttt{$\cTtm$}{with a marked inner edge non incident to the root vertex}}
\index{i@\textbf{quasi-simple triangulations of the $p$-gon}!of the $1$-gon!a@$\cQ=\cup_n \kQ_{n,1}$}
Let $\cQ$ be the family of quasi-simple triangulations of the 1-gon, where the size-parameter is the number of inner vertices. 
Let \cTc, \cTt, \cTct, \cTtm be the
families of rooted simple triangulations with respectively a marked inner vertex, a marked inner edge,
and a marked inner edge incident (resp. not incident) to the root vertex, where the size-parameter is the number of vertices minus 2 (which is also half the number of faces).  
We use a decomposition of $\cQ$ described in~\cite[Sec.3.2]{albenque2014symmetric} (decomposition 
along the nested sequence of 1-cycles and 2-cycles) that yields $\cQ=\kX\cdot\kU$, where the families $\kU,\kV$ admit the following decomposition-grammar\footnote{We use extensively decomposition grammars in this section, and recall that they automatically give corresponding equations for the associated generating series, see~\cite[ch.1]{flajolet2009analytic}. Disjoint union (denoted by $+$), cartesian product (denoted by $\cdot$) and sequence operators (denoted by $\mathrm{Seq})$ for decompositions grammar correspond respectively to addition, multiplication and $x\mapsto \frac{1}{1-x}$ for generating series.}:
\[
\left\{
\begin{array}{rl}
\kU & = \cTc + \kX \cdot (1+\kU) + \cTct \cdot \kU + \cTtm \cdot \kV,\\
\kV & = \cTc + 2\cdot \kX \cdot (1+\kU) + \cTt\cdot\kV,
\end{array}
\right.
\]
with $\kX$ an atom that accounts for $1$ in the size. 

This system can easily be turned into a regular expression for $\kU=\cQ$ involving only the ``terminal families'' \cTc, \cTt, \cTct, \cTtm (as well as the atom-family $\kX$).  
Note that the second line gives
\[
\kV=\big( \cTc+2\cdot\kX+2\cdot\kX\cdot\kU \big)\cdot\Seq(\cTt).
\]
Injecting it into the first line, we obtain
\begin{equation}\label{eq:reg_expr_U}
\kU=\kF+\kG\cdot\kU,\ \ \mathrm{equivalently}\ \ \kU=\kF\cdot\Seq(\kG),
\end{equation}
where $\kF=\cTc+\kX+\cTtm\cdot(\cTc+2\cdot\kX)\cdot\Seq(\cTt)$ and $\kG=\kX+\cTct+2\cdot\kX\cdot\cTtm\cdot\Seq(\cTt)$. 
\index{i@\textbf{quasi-simple triangulations of the $p$-gon}!of the $1$-gon!rand@\ttt{$\qn^{(1)}$}{uniform in $\kQ_{n,1}$}}
\index{i@\textbf{quasi-simple triangulations of the $p$-gon}!of the $1$-gon!randa@\ttt{$T(\qn^{(1)})$}{the largest simple component of $\qn^{(1)}$}}
\begin{lemma}\label{lem:largestSimple}
Let $\fqn^{(1)}$ be a uniformly random element of size $n$ in $\cQ$. Let $T(\fqn^{(1)})$ be the largest simple component of $\fqn^{(1)}$ along the decomposition~\eqref{eq:reg_expr_U}, 
and let $X_n$ be its half number of faces. Then a.a.s.
$T(\fqn^{(1)})$ is in $\cTc,\cTt$, or $\cTtm$ (not $\cTct$) and follows the uniform distribution for that family, conditioned on its size. In addition, the random variable $n-X_n$ 
%(half number of faces not in $T(\fqn^{(1)})$) 
converges in law, hence is $O(1)$ in probability. 
\end{lemma}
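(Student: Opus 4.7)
The plan is to run a standard subcritical-composition analysis on the regular expression $\cQ=\kF\cdot \Seq(\kG)$. First I would locate the singular behaviour of the four terminal series at the common dominant singularity $\xc=27/256$. The enumeration~\eqref{eq:enumSimpleTrig} gives $|\kT_n|\sim c\,\xc^{-n}n^{-5/2}$; by standard pointing arguments (a non-root vertex or a non-root-incident edge gives an extra factor of $n$, whereas a root-incident edge contributes only the root degree, which has finite Boltzmann mean in view of the explicit expression of $T(x,v)$ recalled in Section~\ref{sec:pole_dista}), one checks that $\cTc$, $\cTt$ and $\cTtm$ each have a square-root singular expansion at $\xc$, whereas $\cTct$ has a $(1-x/\xc)^{3/2}$ expansion. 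Equivalently,
\[
[x^n]F(x) \sim c_F\, n^{-3/2}\xc^{-n} \text{ for } F\in\{\cTc,\cTt,\cTtm\},\qquad [x^n]\cTct(x) \sim c\, n^{-5/2}\xc^{-n}.
\]

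Compatibility of the asymptotic $|\cQ_n|\sim C(1)\,n^{-3/2}\xc^{-n}$ from~\eqref{eq:asymptoticQEnum} with the decomposition $\cQ=\kF/(1-\kG)$ forces the subcriticality condition $\kG(\xc)<1$, so $\Seq(\kG)$ is analytic at $\xc$ and the square-root singular behaviour of $\cQ$ is entirely inherited from $\kF$. Under this condition, the uniform measure on $\cQ_n$ can be realised as the critical Boltzmann measure on $\cQ$ at parameter $\xc$, conditioned on total size~$n$: draw $f\in\kF$ from its Boltzmann law, draw an independent $K\sim\mathrm{Geom}(1-\kG(\xc))$, and then i.i.d.\ blocks $g_1,\ldots,g_K$ from the Boltzmann law on $\kG$; each of $f$ and the $g_i$ is in turn unfolded along the regular expression for $\kF$ or $\kG$ into a finite ordered list of atoms (each of size~$1$) and terminal simple components in $\cTc,\cTt,\cTct$ or $\cTtm$. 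The total number of atoms and simple components is then $O(1)$ in probability.

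The heart of the argument is a one-big-jump principle: conditioning the sum of a tight number of subexponentially-tailed sizes ($n^{-3/2}$ or $n^{-5/2}$) to equal $n\to\infty$ forces exactly one summand to be of size $n-O(1)$ and the rest to remain bounded. Since $\cTct$ has strictly lighter tail, the probability that the dominating block is of type $\cTct$ is of order $[x^n]\cTct(x)/[x^n]\kF(x)=O(n^{-1})$ and hence vanishes, which proves $T(\fqn)\in\cTc\cup\cTt\cup\cTtm$ a.a.s. The Boltzmann property then gives uniformity of $T(\fqn)$ within its family conditionally on its type and size; and on this event the remaining atoms and simple components converge jointly in law to an unconditioned Boltzmann configuration, so that $n-X_n$ converges in distribution. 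The main obstacle is to make the one-big-jump step quantitative in this multi-type setting: one needs a local-limit-type estimate for the generating function of the ``complement'' obtained after removing a specified simple component from an $\kF$- or $\kG$-block, in the spirit of Lemma~\ref{lem:size_core} and the analysis carried out in Section~\ref{sec:marked_3c}.
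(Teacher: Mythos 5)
Your route is correct in outline and is essentially the probabilistic reformulation of the paper's argument. You unfold $\cQ=\kF\cdot\Seq(\kG)$ (and the nested $\Seq(\cTt)$) into a Boltzmann sampler at the common singular point $\xc=27/256$, observe that the number of atoms and terminal components is tight because the composition is subcritical, and then invoke a one-big-jump principle to conclude that exactly one terminal component carries $n-O(1)$ of the size, that its type is a.a.s.\ one of the heavier-tailed families ($n^{-3/2}$ rather than $n^{-5/2}$), and that the leftover configuration converges. This is mathematically the same phenomenon that the paper exploits, but the paper does not re-derive it: it quotes the three largest-component rules (sum, product, sequence) for subcritical specifications at a common $\rho^{-n}n^{-\alpha-1}$ singularity from Gourdon~\cite{Go95}, together with the singularity types $1/2$ for $\cTc,\cTt,\cTtm$ and $3/2$ for $\cTct$ from~\cite{albenque2014symmetric}, and applies them mechanically to~\eqref{eq:reg_expr_U}. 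The step you flag as the ``main obstacle'' --- making the multi-type one-big-jump quantitative via a local-limit estimate on the complement --- is precisely the content of those cited rules, so the paper's choice of primitives sidesteps the work you would otherwise have to do by hand. One small caution in your write-up: the subcriticality argument ($\kG(\xc)<1$, and, nested one level down, $\cTt(\xc)<1$) is presented as forced by the already-known $n^{-3/2}$ asymptotic for $|\cQ_n|$; that is a legitimate a-posteriori check here because Brown's formula is available, but it is worth noting that one still needs the full $(1-z/\xc)^{1/2}$ or $(1-z/\xc)^{3/2}$ singular expansions of the four terminal series (not merely coefficient asymptotics, which are logically weaker) for the local-limit step to go through, which is exactly what \cite{albenque2014symmetric} supplies and what the paper's ``singularity type'' hypothesis requires.
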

\begin{proof}
We rely on classical results of analytic combinatorics regarding the size of the largest component in combinatorial decompositions~\cite{Go95}. 
For $\alpha\in\R\backslash\N$, a combinatorial class $\kC$ is said to be of singularity-type $\alpha$ if the radius of convergence $\rho$ of  the counting series 
$C(z)=\sum_n|\kC_n|z^n$  is positive, and $C(z)$ can be analytically continued to a domain $\{|z|\leq \rho+\eta,z-\rho\notin\R_+\}$ (for some $\eta>0$), 
where it admits a singular expansion around $\rho$ of 
the form 
\[
C(z)=P(z)+d\,(1-z/\rho)^{\alpha}\,(1+o(1)),
\] 
for some polynomial $P(z)$ of degree smaller than $\alpha$, and some non-zero constant~$d$. Transfer
theorems of analytic combinatorics~\cite[VI.3]{flajolet2009analytic} then guarantee that $|\kC_n|\sim \frac{d}{\Gamma(-\alpha)}\rho^{-n}n^{-\alpha-1}$. The following
dictionary can then be obtained regarding the size of the greatest component (we also include a sum-rule to track which families along the decomposition have asymptotically non-zero probability of having the largest component):

\medskip

\noindent{\bf Sum rule.} Let $\kC=\kA+\kB$, where $\kA,\kB$ have the same radius of convergence $\rho$, and have respective 
singularity types $\alpha,\alpha'\notin\N$, both positive. Let $\cn$ be the random structure in $\kC_n$. Then $\cn$ is almost surely in $\kA$ (resp. $\kB$) if $\alpha<\alpha'$ (resp. $\alpha'<\alpha$). 

\medskip

\noindent{\bf Product rule.} Let $\kC=\kA\cdot\kB$, where $\kA,\kB$ have the same radius of convergence~$\rho$, and have respective 
singularity types $\alpha,\alpha'\notin\N$, both positive. Let $\cn$ be the random structure in $\kC_n$. Let $X_n$ be the size of its largest component (either in $\kA$ or in $\kB$, in $\kA$
if there is a tie).  
Then $n-X_n$ converges in law. In addition, the largest component is a.a.s. in $\kA$ (resp. in $\kB$) if $\alpha<\alpha'$ (resp. $\alpha'<\alpha$). 

\medskip

\noindent{\bf Sequence rule.} Let $\kC=\Seq(\kA)$ (with no object of size $0$ in $\kA$), where $\kC,\kA$ have same radius of convergence and same positive singularity type $\alpha\notin\N$  (non-critical case in~\cite{Go95}). Let $\cn$ be the random structure in $\kC_n$, and let $X_n$ be the size of the largest component (in $\kA$, if there is a tie the leftmost component of largest size is chosen). Then $n-X_n$ converges in law. %(the limit law having  exponential tail). 
% \ceri{enlevé "the limit law having  exponential tail", pas utile et sans doute faux}

\medskip

We can then use these rules repeatedly in the regular expression~\eqref{eq:reg_expr_U} to prove the statement, using also the fact~\cite{albenque2014symmetric} that the counting series of \cTc, \cTt, \cTct, \cTtm all have radius of convergence $27/256$, with singularity type $1/2$ for \cTc, \cTt, \cTtm, and singularity type $3/2$ for \cTct (the fact that the largest component 
is a.a.s. not in $\cTct$ is due to the larger singularity type for this family).  
\end{proof}

We will also need the following statement, ensuring that, for the decomposition~\eqref{eq:reg_expr_U}, the largest simple component of $\qn^{(1)}$ is asymptotically distributed as a uniform random rooted simple triangulation (conditioned on its size):

\begin{lemma}\label{lem:almostUniform}
Recall the definition of $\kT_n$. Then, the distribution induced (upon unmarking) by the 
uniform distribution on $\cTc_n$ (resp. $\cTt_n$) is the uniform distribution on $\kT_n$. And the distribution induced (upon unmarking) by the 
uniform distribution on $\cTtm_n$ is at total-variation distance $o(1)$ from the uniform distribution on $\kT_n$.
\end{lemma}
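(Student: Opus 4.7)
The plan is to compute, for each of the three families, the number of preimages of a fixed $T \in \kT_n$ under the ``forget-the-mark'' projection, and to compare the resulting pushforward measure with the uniform measure on $\kT_n$.

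For $\cTc_n$, every $T \in \kT_n$ has $n+2$ vertices, three of which lie on the (simple) root face, so $T$ has exactly $n-1$ inner vertices. The projection $\cTc_n \to \kT_n$ is therefore $(n-1)$-to-one independently of $T$, and the pushforward of the uniform measure on $\cTc_n$ is exactly the uniform measure on $\kT_n$. The argument for $\cTt_n$ is identical: by Euler's formula every $T \in \kT_n$ has $3n$ edges, of which three lie on the root face, so the projection is $(3n-3)$-to-one.

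For $\cTtm_n$ the count now depends on $T$: with $d_{\mathrm r}(T)$ denoting the degree of the root vertex, exactly $d_{\mathrm r}(T)-2$ of the $d_{\mathrm r}(T)$ edges incident to the root vertex are inner (the other two lie on the root face), so the number of inner edges of $T$ not incident to the root vertex equals $(3n-3)-(d_{\mathrm r}(T)-2)=3n-1-d_{\mathrm r}(T)$. Writing $\bar d_n:=\mathbb E[d_{\mathrm r}(\tn)]$ and $\pi_n$ for the pushforward of the uniform measure on $\cTtm_n$, a direct computation gives, for every $T \in \kT_n$,
\[
\pi_n(T)-\frac{1}{|\kT_n|}\ =\ \frac{1}{|\kT_n|}\cdot\frac{\bar d_n-d_{\mathrm r}(T)}{3n-1-\bar d_n},
\]
so that
\[
d_{\mathrm{TV}}(\pi_n,\mathrm{unif}_{\kT_n})\ =\ \frac{\mathbb E\!\left[|d_{\mathrm r}(\tn)-\bar d_n|\right]}{2(3n-1-\bar d_n)}\ \leq\ \frac{\bar d_n}{3n-1-\bar d_n}.
\]

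It thus suffices to check that $\bar d_n$ is bounded uniformly in $n$. This can be read off from Brown's explicit bivariate series $T(x,v)$ recalled in the proof of Lemma~\ref{lem:diam_K}: the existence of $v_0>1$ with $T(27/256,v_0)<\infty$ implies that $\partial_v T(x,v)|_{v=1}$ admits a singular expansion at $x=27/256$ of the same order as that of $T(x,1)$, and standard singularity analysis then yields $\bar d_n=[x^n]\partial_v T(x,v)|_{v=1}/[x^n]T(x,1)=\Theta(1)$. The displayed bound is then $O(1/n)=o(1)$, as required. The main (mild) point of care is the bookkeeping around the root face in the count of preimages; the generating function input needed to conclude is routine once the tail estimate for the root degree used in Lemma~\ref{lem:diam_K} is in hand.
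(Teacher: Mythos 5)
Your proof is correct and takes essentially the same route as the paper: count allowed markings ($n-1$, $3n-3$, $3n-1-d_{\mathrm r}(T)$ respectively) and invoke the concentration of the root degree to handle $\cTtm_n$. What you add is the explicit total-variation identity, which makes fully rigorous what the paper states only informally (``concentrated around $3n$''). One small remark: you invoke $\bar d_n=\Theta(1)$, and the inference from $T(27/256,v_0)<\infty$ to a matching singular expansion for $\partial_v T(x,v)|_{v=1}$ is quick as stated (it is true, but needs Brown's explicit formula, not just convergence of the bivariate series at $(27/256,v_0)$). This is not actually needed: the tail bound $\mathbb{P}(\mathrm{rootDeg}(\tn)=k)=O(n^{5/2}v_0^{-k})$ already proved in Lemma~\ref{lem:diam_K} gives $\bar d_n=O(\log n)$ directly, and your displayed bound $d_{\mathrm{TV}}(\pi_n,\mathrm{unif}_{\kT_n})\le \bar d_n/(3n-1-\bar d_n)$ then yields $O(\log n/n)=o(1)$ without further analysis.
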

\begin{proof}
The statement for $\cTc_n$ (resp. $\cTt_n$) is obvious, since the number of allowed markings on an object in $\kT_n$ is $n-1$ (resp. $3n-3$). For $\cTtm_n$, the number
of allowed markings of an object in $\kT_n$, with root degree $d$, is $3n-1-d$.  
As we have seen in the proof of Lemma~\ref{lem:diam_K}, the root degree of the random triangulation in $\kT_n$ is a.a.s. at most $n^{\epsilon}$ for any $\epsilon>0$
(even more is true, it converges in law),  hence 
 the number of allowed markings is concentrated around $3n$, ensuring that the distribution induced by $\cTtm_n$ on $\kT_n$ is at total-variation distance $o(1)$ from 
the uniform distribution on $\kT_n$.  
\end{proof}

\subsection{Quasi-simple triangulations of the cylinder and their skeleton decomposition}\label{sub:skeleton}
In this section, we describe the structure of the neighborhood of the root in a quasi-simple triangulation, and how it can be encoded via the so-called \emph{skeleton decomposition} introduced by Krikun in~\cite{Krikun}. We follow here the presentation given by Curien and Le Gall in~\cite[Section~2.2]{CurienJFLG} for \emph{general} triangulations, and we  emphasize the necessary adjustments to deal with quasi-simple triangulations. 
We first introduce quasi-simple triangulations of the cylinder, which will appear naturally as the hulls of quasi-simple triangulations. 
\index{j@\textbf{quasi-simple triangulations of the cylinder}!cycle-bottom@\ttt{$\partial \Delta$}{bottom cycle, the boundary of the root face}}
\index{j@\textbf{quasi-simple triangulations of the cylinder}!cycle-top@\ttt{$\partial^* \Delta$}{top cycle, the boundary of the top face}}
\begin{definition}
Fix $r\in \Z_{>0}$. A \emph{quasi-simple triangulation} of the cylinder of height $r$ is a rooted planar map such that: 
\begin{enumerate}
\item All its faces are triangles except for two distinguished faces: its root face (also called the bottom face) and the top face.
\item The boundaries of the bottom and of the top face are disjoint simple cycles. 
\item Every vertex incident to the top face is at graph distance exactly $r$ from the boundary of the bottom face, and every edge incident to the top face is also incident to a triangle whose third vertex is at distance $r-1$ from the bottom face. 
\item Cycles of length 1 and 2 necessarily separate the bottom face and the top face. 
\end{enumerate}
If $\Delta$ is a quasi-simple triangulation of the cylinder of height $r$, the boundary of its root face (bottom cycle) is denoted by $\partial \Delta$ and its top cycle (boundary of the top face) is denoted by $\partial^* \Delta$. We call $\Delta$ a quasi-simple triangulation of the $(|\partial \Delta|,|\partial^* \Delta|)$-cylinder. 

\index{j@\textbf{quasi-simple triangulations of the cylinder}!a@\ttt{$\cyl p r$}{of height $r$ with root face of degree $p$}}
Moreover, we denote by $\cyl p r$ the set of quasi-simple triangulations of the cylinder of height $r$ with a root face of degree $p$. 
\end{definition}
Note that this is the same definition as~\cite[Definition~1]{CurienJFLG}, except for item $4$.
To describe the encoding of a triangulation of the cylinder via the skeleton decomposition, we need additional definitions. 
\index{j@\textbf{quasi-simple triangulations of the cylinder}!dball@\ttt{$B_j(\Delta)$}{ball of radius $j$}}
\begin{definition}
Let $\Delta$ be a fixed quasi-simple triangulation of the cylinder of height $r$. For $1\leq j\leq r$, the \emph{ball of radius $j$} of $\Delta$ -- denoted $B_j(\Delta)$ -- is the submap of $\Delta$, which is spanned by its faces which are incident to at least one vertex at distance strictly smaller  than $j$ from $\partial \Delta$. 

\begin{figure}
\centering
\includegraphics[width=0.4\linewidth]{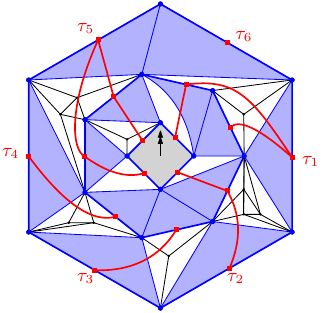}\caption{\label{fig:skeleton}Illustration of the skeleton decomposition of a quasi-simple triangulation of the $(4,6)$-cylinder of height 2.
Downward triangles are represented by blue shaded triangles. Trees of the forest are represented by fat red edges and are labeled from $\tau_1$ to $\tau_6$.}
\end{figure}

Moreover, the \emph{hull of radius $j$} of $\Delta$ -- denoted $B^\bullet_j(\Delta)$ -- is defined as the reunion of $B_j(\Delta)$ with the connected components of $\Delta\backslash B_j(\Delta)$ that do not contain $\partial^* \Delta$.
\end{definition}

\index{j@\textbf{quasi-simple triangulations of the cylinder}!dhull@\ttt{$B^\bullet_j(\Delta)$}{hull of radius $j$}}
\index{j@\textbf{quasi-simple triangulations of the cylinder}!dhullbdy@$\partial_j(\Delta) = \partial^*(B^\bullet_j(\Delta))$, $\partial_0(\Delta) = \partial \Delta$}
\index{j@\textbf{quasi-simple triangulations of the cylinder}!dhulldownwardtgl@$F_j(\Delta)$ downwards triangles incident to edges of $\partial_j(\Delta)$}
\index{j@\textbf{quasi-simple triangulations of the cylinder}!dhulledges@$\bar E(\Delta) = \cup_j E(\partial_j(\Delta))$}
Note that for any $j\in\{1,\ldots,r\}$, $B^\bullet_j(\Delta)$ is itself a quasi-triangulation of the cylinder (of height $j$), and we set $\partial_j(\Delta):=\partial^* (B^\bullet_j(\Delta))$. We extend this notation to $j=0$ and set $\partial_0(\Delta):=\partial \Delta$. Every edge of $\partial_j(\Delta)$ is incident to exactly one triangle whose third vertex belongs to $\partial_{j-1}(\Delta)$. Such triangles are called the \emph{downward triangles at height $j$}, and we write $\mathrm{F}_j(\Delta)$ for the set of all downward triangles at height $j$. Let $\bar E(\Delta)$ be the collection of all edges that belong to one of the cycles $\partial_j(\Delta)$, for $0\leq j \leq r$. The planar embedding of $\Delta$ enables to encode the edges of $\bar E(\Delta)$ as the vertices of a forest $\mathrm{f}$, with $q$ trees where $q:=|\partial^\star|$. This encoding is depicted in~Figure~\ref{fig:skeleton} and we refer to~\cite[page 9]{CurienJFLG} for its formal definition. 

Given the forest $\mathrm{f}:=(\tau_1,\ldots,\tau_q)$, to reconstruct the full quasi-triangulation $\Delta$, we need to fill the ``slots'' that lie between successive downward triangles. More precisely, for $e\in \partial_j(\Delta)$ with $1\leq j \leq r$, we associate to $e$ a (possibly empty) slot in the following way: the slot is included in $B^\bullet_j(\Delta)\backslash B^\bullet_{j-1}(\Delta)$, and is bounded by the edges of $\partial_{j-1}(\Delta)$ that correspond to children of $e$ and by the two ``vertical'' edges that connect the initial vertex of $e$ (when $\partial_j(\Delta)$ is oriented so that $B^\bullet_j(\Delta)$ lies on its left) with vertices of $\partial_{j-1}(\Delta)$. The definition should be clear from Figure~\ref{fig:skeleton}.

Writing $k_{\mathrm{f}}(e)$ for the number of children of $e$ in the forest $\mathrm{f}$, the slot associated to $e$ has to be filled with a \emph{simple} triangulation of the $k_{\mathrm{f}}(e)+2$-gon. Note that, when $k_{\mathrm{f}}(e)=0$, the slot is filled by the edge-triangulation, or in other words the two vertical edges defining the slot are identified (whereas in the setting of~\cite{CurienJFLG} of general triangulations, non-trivial slots with a boundary of length 2 may occur). 
\bigskip

\index{j@\textbf{quasi-simple triangulations of the cylinder}!z@$(p,q,r)$-admissible forest $\mathrm{f}$}
\index{j@\textbf{quasi-simple triangulations of the cylinder}!z@$(p,q,r)$-admissible forest $\mathrm{f}$!degree@\ttt{$k_\mathrm{f}(e)$}{number of children of $e$ in $\mathrm{f}$}}
\index{j@\textbf{quasi-simple triangulations of the cylinder}!z@$(p,q,r)$-admissible forest $\mathrm{f}$!inner@\ttt{$\mathrm{f}^*$}{vertices of $\mathrm{f}$ at height $<r$}}
To characterize the forests that can appear via the encoding, we need the following definition:
\begin{definition}
A forest $\mathrm{f}$ with a marked vertex is said to be $(p,q,r)$-admissible if: 
\begin{enumerate}
 \item The forest $\mathrm{f}:=(\tau_1,\ldots,\tau_q)$ is made of $q$ rooted plane trees, 
 \item The maximal height of these trees is $r$.
\item The total number of vertices of the forest at generation $r$ is $p$, 
\item The distinguished vertex has height $r$, 
\item The distinguished vertex is in $\tau_1$.
\end{enumerate}
We write $\mathrm{f}^\star$ for the set of vertices of $\mathrm f$ with height strictly smaller than $r$. 

\index{j@\textbf{quasi-simple triangulations of the cylinder}!z@$(p,q,r)$-admissible forest $\mathrm{f}$!a@\ttt{$\kF_{p,q,r}$}{set of all}}
\index{j@\textbf{quasi-simple triangulations of the cylinder}!z@$(p,q,r)$-admissible forest $\mathrm{f}$!aa@$\kF_{p,r} = \cup_{q\geq 1} \kF_{p,q,r}$}
Moreover, the set of $(p,q,r)$-admissible forests is denoted $\kF_{p,q,r}$ and we define $\kF_{p,r}:=\cup_{q\geq 1}\kF_{p,q,r}$.
\end{definition}
\begin{figure}[t!]
\centering
\subfloat[A separating loop.]{\quad\quad
      \includegraphics[width=5.4cm,page=2]{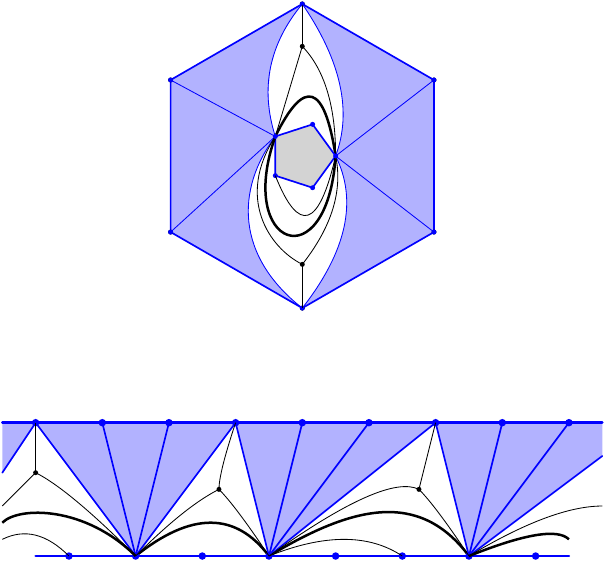}
      \label{subfig:exLoop}\quad\quad} \qquad
    \subfloat[A separating pair of edges.]{\quad\quad
      \includegraphics[width=5.4cm,page=1]{images/obstruction.pdf}
      \label{subfig:exPair}\quad}\\
    \caption{(Top) Two examples of how a loop or a pair of multiple edges can appear when grafting simple triangulations in the slots of a skeleton. Both the loop and the multiple edges separate the root face (in grey) from the pointed vertex (not represented in the figures). (Bottom) The same maps, represented on the universal cover, are now simple triangulations.}
    \label{fig:loopPair}\end{figure}
\begin{proposition}
The skeleton decomposition described above is a bijection between, quasi-simple triangulations $\Delta$ of the $(p,q)$-cylinder of height $r$, and pairs $(\mathrm{f},(\mathrm{t}_v)_{v\in \mathrm{f}^\star})$, where $\mathrm{f}$ is a $(p,q,r)$-admissible forest and for any $v\in \mathrm{f}^\star$, with $e\in\bar{E}(\Delta)$ the corresponding edge, $\mathrm{t}_v$ is a \emph{simple} triangulation of the $(k_{\mathrm{f}}(e)+2)$-gon.

\index{j@\textbf{quasi-simple triangulations of the cylinder}!skeldec@\ttt{$(\mathfrak F(\Delta),(\mathrm{t}_v)_{v\in \mathfrak{F}(\Delta)^\star})$}{skeleton decomposition of $\Delta$}}
We write $(\mathfrak F(\Delta),(\mathrm{t}_v)_{v\in \mathfrak{F}(\Delta)^\star})$ for the image of $\Delta$ via this bijection. 
\end{proposition}

\begin{proof}
This proposition is very similar to the skeleton decomposition introduced originally by Krikun in~\cite{Krikun}. However, in this paper, he dealt with \emph{loopless} triangulations rather than quasi-simple triangulations. In other words, loops were always forbidden (whereas they are allowed in our setting if they separate $\partial \Delta$ and $\partial \Delta^\star$) and multiple edges were always authorized (whereas we allow them only if they separate $\partial \Delta$ and $\partial \Delta^\star$). This is the only differences between his decomposition and ours. Hence, we only need to prove that we indeed obtain a quasi-simple triangulation. In other words, we only need to check that 1-cycle and 2-cycle separate $\partial \Delta$ and $\partial \Delta^\star$. 

% This can be done by direct inspection, by considering the cases where edges of the 1-cycle or 2-cycle are edges of $\bar E(\Delta)$, ``vertical'' edges or edges of the simple triangulations that are grafted into the slots

One can argue via the universal cover of the cylinder, which is the periodic plane $(\R/\Z)\times\R$. Indeed, the constraint
of being quasi-simple on the cylinder is equivalent to being simple in the universal cover representation. 
Thus, one just has to check that, on $(\R/\Z)\times\R$, the obtained (periodic) triangulation is simple. Since, in the universal cover representation,
each boundary $\partial_j(\Delta)$ can conveniently be drawn as the horizontal line $\{y=j\}$, with the next added layer in the horizontal band $\{j\leq y\leq j+1\}$, the result follows directly. 

For sake of illustration, some configurations in which a loop or a pair of multiple edges can appear are represented on Figure~\ref{fig:loopPair} (note that it is only possible to have a loop or a pair of edges if $\mathfrak{F}(\Delta)$ has only one or two vertices at some generation $j$ with $0<j<r$). 
\end{proof}

\subsection{Hulls and encoding via a branching process}\label{sub:branching}
In this section, we show how the probability to have a fixed neighborhood of the root can be expressed in terms of a branching process via the skeleton decomposition. 

\begin{figure}
\centering
\subfloat[Balls of radius 1 and 2.]{\quad\quad
      \includegraphics[width=5.6cm,page=2]{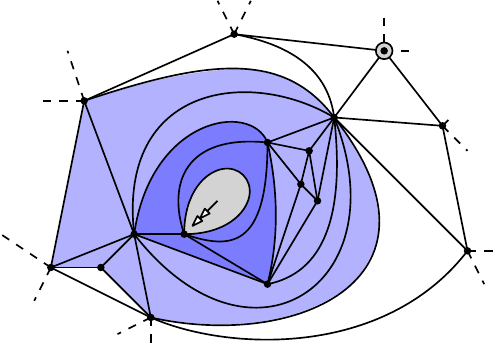}
      \label{subfig:exBall}\quad\quad} 
    \subfloat[Hulls of radius 1 and 2.]{\quad\quad
      \includegraphics[width=5.6cm,page=1]{images/HullBall.pdf}
      \label{subfig:exHull}\quad\quad}\\
    \caption{The balls and the hulls in a quasi simple triangulation of the 1-gon are represented in various shades of blue. The pointed vertex is the encircled vertex.}
    \label{fig:HullsBalls}
\end{figure}
\index{i@\textbf{quasi-simple triangulations of the $p$-gon}!hull-ball@\ttt{$B_R(\mathrm{q})$}{ball of radius $R$ of $\mathrm{q}$}}
\index{i@\textbf{quasi-simple triangulations of the $p$-gon}!hull-hull@\ttt{$B^\bullet_R(\mathrm{q},v^*)$}{hull of radius $R$ of $(\mathrm{q},v^*)$}}
\begin{definition}
Let $(\mathrm{q},v^*)$ be a quasi-simple triangulation of the $p$-gon. For $R\geq 1$, the \emph{ball of radius $R$} of $\mathrm{q}$ -- denoted $B_R(\mathrm{q})$ -- is the submap of $\mathrm{q}$, which is spanned by its faces which are incident to at least one vertex at distance strictly smaller than $R$ from its root face, see Figure~\ref{fig:HullsBalls}.

Moreover, the \emph{hull of radius $R$} of $(\mathrm{q},v^*)$ -- denoted $B^\bullet_R(\mathrm{q},v^\star)$ -- is defined from $B_R(\mathrm{q})$ in the following way. If $\dist(v_\rho,v^\star)\leq R$, then $B^\bullet_R(\mathrm{q},v^\star) = \mathrm{q}$. Otherwise, $B^\bullet_R(\mathrm{q},v^\star)$ is the submap of $\mathrm{q}$, that consists of the reunion of $B_R(\mathrm{q})$ with the connected components of $\mathrm{q}\backslash B_R(\mathrm{q})$ that do not contain $v^\star$.
\end{definition}
It follows directly from the definition of the hull, that $B^\bullet_R(\mathrm{q},v^\star)$ is either a quasi-simple triangulation (with a marked vertex) if $B^\bullet_R(\mathrm{q},v^\star)=(\mathrm{q},v^\star)$, or a quasi-simple triangulation of the cylinder $\Delta$, with $|\partial \Delta|=p$, otherwise.

\index{i@\textbf{quasi-simple triangulations of the $p$-gon}!hull-limk@$\kappa(p)$, $p\geq 1$}
\index{i@\textbf{quasi-simple triangulations of the $p$-gon}!hull-limt@$\theta(k)$, $k\geq 0$}
\index{i@\textbf{quasi-simple triangulations of the $p$-gon}!proba@\ttt{$\mathfrak{q}^{(p)}_n$}{uniform in $\kQ_{n,p}$}}
\begin{proposition}\label{prop:forestHull}
Let $p,q,r$ be positive integers. Let $\Delta$ be a quasi-simple triangulation of the $(p,q)$-cylinder with height $r$. Then, for $\mathfrak q_n^{(p)}$ a uniformly random element of $\kQ_{n,p}$, we have:
\begin{equation}
\mathbb{P}\left({B_r^\bullet(\qn^{(p)})}=\Delta\right) \xrightarrow[n\to\infty]{} 
\frac{\kappa(q)}{\kappa(p)}\prod_{v\in \mathfrak{F}(\Delta)^\star}\theta(k_v)\frac{\rho^{\text{Inn}(\mathfrak s_v)}}{Z(k_v+2)},
 \end{equation} 
 where $\kappa$ and $\theta$ are defined by: 
 \begin{equation}
 \kappa(p)= \frac{p}{4^p}\binom{2p}{p} \text{ for } p\geq 1,\quad \text{and}\quad \theta(k) = \frac{3}{2}\frac{1}{4^k}\frac{(2k)!}{k!(k+2)!} \text{ for } k\geq 0.
 \end{equation}
\end{proposition}
\index{j@\textbf{quasi-simple triangulations of the cylinder}!skeldec-slot@$\text{Inn}(\mathrm{t})$ number of inner vertices of the slot-filling triangulation $\mathrm{t}$}

\begin{remark}\label{rem:branching} It follows from the definition of $\theta$ that: 
\[\sum_{k\geq 0}\theta(k)x^k=1-\left(1+\frac{1}{\sqrt{1-x}}\right)^{-2}.\]
Hence our definition of $\theta$ matches the definition given in~\cite[(22)]{Krikun} and in \cite[(15)]{CurienJFLG}. This is not surprising and can be explained via classical substitution relations between general, loopless and simple triangulations. 

\medskip

Note that Proposition~\ref{prop:forestHull} is fully analogous to~\cite[Lemma~2]{CurienJFLG}, except that here $Z$ and $\rho$ denote respectively the generating series and the radius of convergence associated to \emph{simple} triangulations, whereas in~\cite{CurienJFLG}, they correspond to \emph{general} (a.k.a allowing loops and multiple edges) triangulations.  \dotfill
\end{remark}

\begin{proof}
We set $\rho = 27/256$ to be the radius of convergence of the generating series of quasi-simple triangulations and $\beta = 9/64$ to be inverse of the growth rate of $Z(p)$ defined in~\eqref{eq:defZp}. We denote by $|\Delta|$ the total number of vertices of $\Delta$. Recall that $\kQ_{n,p}$ is the set of quasi-simple triangulations of the $p$-gon with $n$ \emph{inner} vertices. Then: 
\begin{equation}\label{eq:probaDelta}
\mathbb{P}\left({B_r^\bullet(\qn^{(p)})}=\Delta\right) = \frac{|\kQ_{n-|\Delta|+p,q}|}{|\kQ_{n,p}|}\stackrel[n\rightarrow \infty]{}{\sim} \frac{C(q)}{C(p)}\rho^{|\Delta|-p},
\end{equation}
where the asymptotic equivalent is obtained from \eqref{eq:asymptoticQEnum}.
\medskip

To lighten notation, in this proof we set $\mathrm f:=\mathfrak{F}(\Delta)$, and recall that $\mathrm f^*$ is the set of its vertices with height strictly smaller than $r$. According to the skeleton decomposition, the vertices of $\Delta$ can be partitioned into two sets. They either belong to one of the cycles $\delta_j\Delta$, for $0\leq j \leq r$, or they are inner vertices of the simple triangulations that are inserted in the slots. Hence $|\Delta|= |f^\star| + \sum_{v\in \mathrm f^\star}\text{Inn}(S_v)$. Since $|\mathrm f^\star|= q + \sum_{v\in \mathrm f^\star}k_v$, we get from~\eqref{eq:probaDelta} that: 
\[
    \mathbb{P}\left({B_r^\bullet(\qn^{(p)})}=\Delta\right) \stackrel[n\rightarrow \infty]{}{\sim}\frac{C(q)}{C(p)}\rho^{q-p}\prod_{v\in \mathrm f^\star}\rho^{\text{Inn}(S_v)+k_v}.
\]
Following similar computations performed in~\cite{Krikun} and~\cite{CurienJFLG}, and in  order to interpret this quantity in terms of a branching process, we perform a    change of variables. Since $\sum_{v\in \mathrm f^\star}(k_v-1)=p-q$, we can multiply the former expression by $(\rho/\beta)^{p-q-\sum_{v\in \mathrm f^\star}(k_v-1)}$ to get:
\[
    \mathbb{P}\left({B_r^\bullet(\qn^{(p)})}=\Delta\right) \stackrel[n\rightarrow \infty]{}{\sim}\frac{\beta^{q}C(q)}{\beta^{p}C(p)}\prod_{v\in \mathrm f^\star}\rho^{\text{Inn}(S_v)+1}\beta^{k_v-1}.
\]
Now, informally, we would like to decompose each term of the product into two contributions. On the one hand, a term representing the probability for a vertex to have $k$ children in the skeleton decomposition and, on the other hand, a term for the probability (under the critical Boltzmann distribution) that a given simple triangulation of the $(k+2)$-gon 
appears in the corresponding slot. 
To do so, we set $\theta(k):=\rho\beta^{k-1}Z(k+2)$, which yields:
\[
    \mathbb{P}\left({B_r^\bullet(\qn^{(p)})}=\Delta\right) \stackrel[n\rightarrow \infty]{}{\sim}\frac{\beta^{q}C(q)}{\beta^{p}C(p)}\prod_{v\in \mathrm f^\star}\theta(k_v)\frac{\rho^{\text{Inn}(S_v)}}{Z(k_v+2)}.
\]
In view of the definition of $\beta$ and $\rho$ and the expression of $Z(k)$ given in~\eqref{eq:defZp}, this concludes the proof.
\end{proof}

\subsection{Local limits: quasi-simple UIPT and half-plane models}\label{sub:locallimits}

\subsubsection{Quasi-simple UIPT}

\index{j@\textbf{quasi-simple triangulations of the cylinder}!z@$(p,q,r)$-admissible forest $\mathrm{f}$!proba@\ttt{$\mathbf{P}_{p,r}$}{distribution on $\kF_{p,r}$}}
In exactly the same manner as in Section~2.4 of~\cite{CurienJFLG}, we can define a probability measure $\mathbf{P}_{p,r}$ on $\kF_{p,r}$ by setting for every forest $\mathrm{f}\in \kF_{p,q,r}$:
\begin{equation}
\mathbf{P}_{p,r}(\mathrm{f}):= \frac{\kappa(q)}{\kappa(p)}\prod_{v\in \mathrm{f}^\star}\theta(k_v).
\end{equation}
\index{j@\textbf{quasi-simple triangulations of the cylinder}!proba@\ttt{$\mathbb{P}_{p,r}$}{distribution on $\kZ_{p,r}$ induced by $\mathbf{P}_{p,r}$}}
We then define a probability measure $\mathbb{P}_{p,r}$ on the set $\cyl p r$, by saying that under $\mathbb{P}_{p,r}$ the law of the skeleton on the triangulation is given by $\mathbf{P}_{p,r}$ and, conditionally given the skeleton, the triangulations filling the slots are independent critical Boltzmann simple triangulations with a simple boundary (whose  boundary lengths are  determined by the skeleton). Proposition~\ref{prop:forestHull} can then be restated as: 
\begin{equation}\label{eq:cvgLocalHull}
\mathbb{P}\left({B_r^\bullet(\qn^{(p)})}=\Delta\right) \xrightarrow[n\to\infty]{} \mathbb{P}_{p,r}(\Delta),
\end{equation}
for any $\Delta\in \cyl p r$.

\index{i@\textbf{quasi-simple triangulations of the $p$-gon}!proba-limloc@\ttt{$\mathfrak{q}_\infty^{(p)}$}{the quasi-simple triangulation of the plane with boundary of length $p$}}
\index{i@\textbf{quasi-simple triangulations of the $p$-gon}!proba-limlocsphere@\ttt{$\mathfrak{q}_\infty = ``\Psi(\mathfrak{q}_\infty^{(1)})''$}{the quasi-simple triangulation of the sphere}}
It follows from the preceding convergence, that we can define a random infinite \emph{quasi-simple} triangulation of the plane with a simple boundary of length $p$ -- denoted $\mathfrak{q}_{\infty}^{(p)}$ -- such that the distribution of $B_r^{\bullet}(\mathfrak{q}_{\infty}^{(p)})$ is given by $\mathbb{P}_{p,r}$, for every integer $r\geq 1$. The map $\mathfrak{q}_{\infty}^{(p)}$ is called the \emph{uniform infinite quasi simple triangulation} of the $p$-gon and it follows directly from~\eqref{eq:cvgLocalHull} that $\mathfrak{q}_{\infty}^{(p)}$ is the local limit of $\qn^{(p)}$ as $n\rightarrow \infty$.                     

By the correspondence $\Psi$ (illustrated in~Figure~\ref{fig:exRoot1gon}) between quasi-simple triangulations of the 1-gon and quasi-simple triangulations of the sphere,
 from $\mathfrak{q}_{\infty}^{(1)}$ we can define the so-called \emph{quasi-simple triangulation of the sphere}, denoted $\mathfrak{q}_{\infty}$.

\subsubsection{Half-plane simple models}\label{subsub:halfPlaneSimpleModels}
Recall from Remark~\ref{rem:branching} that the branching process encoding the skeleton decomposition of quasi-simple triangulations has exactly the same offspring distribution as the one obtained in~\cite{CurienJFLG} for general triangulations. 
% This implies that~\cite[Section~3]{CurienJFLG} can be applied verbatim to our setting. 

\index{k@\textbf{half-plane models}!lhs@\ttt{$\mathfrak{L}^s$}{simple, of the lower half-plane}}
\index{k@\textbf{half-plane models}!lhg@\ttt{$\mathfrak{L}$}{of the lower half-plane}}
\index{k@\textbf{half-plane models}!uhs@\ttt{$\mathfrak{U}^s$}{simple, of the upper half-plane}}
In this work, two half-plane models of random geometry are introduced, the upper half-plane triangulation and the lower half-plane triangulation and we refer to~\cite[Section~3]{CurienJFLG} for their construction. This allows us to define readily the upper half-plane \emph{simple} triangulation $\mathfrak{U}^s$ and the lower half-plane \emph{simple} triangulation $\mathfrak{L}^s$, by only requiring that in their construction, the slots are filled with \emph{simple} triangulations rather than general triangulations. 

All results stated in Sections~3 and 4 of~\cite{CurienJFLG} can be extended verbatim to our setting. Indeed, in most of the proofs they do not need to know how the slots are filled in, and only study the distribution of the skeleton, which is the same in our setting and in~\cite{CurienJFLG}; the only part that we need to change is in \cite[Lemma 10]{CurienJFLG}, where (to reflect the different constants in the enumeration of simple triangulations) $\rho = 12\sqrt 3$ should be replaced by $\rho = 256/27$ and $\alpha = 12$ by $\alpha = 64/9$.  Notice that triangulations of the half-plane that we obtain are simple triangulations and not quasi-simple triangulations, as can be directly observed from the construction of $\mathfrak{U}^s$ and $\mathfrak{L}^s$.

% \TODO{Figure: lower half-plane}

\subsubsection{Coupling between simple and general models}\label{sub:coupling}
It follows from the preceding section that $\mathfrak{q}^{(1)}_\infty$ and $\mathfrak{L}^s$ can be constructed via exactly the same branching process as the (general) UIPT of the 1-gon and $\mathfrak{L}$ respectively. The only modification between these pairs of models is that the slots are filled in the first case by \emph{simple} critical Boltzmann triangulations with a simple boundary and in the second case by \emph{general} critical Boltzmann triangulations with a simple boundary. 

The simple core of a critical Boltzmann general triangulation with a simple boundary of length $\geq 3$ (obtained by collapsing all 2-cycles into edges) is itself a critical Boltzmann simple triangulation with a boundary. So that, by taking the simple core of each triangulation grafted in the slots, this induces a coupling between the (general) UIPT of the $1$-gon and $\mathfrak{q}^{(1)}_\infty$ on one hand and between $\mathfrak{L}$ and $\mathfrak{L}^s$ on the other hand. This remark will be useful to derive bounds in our model directly from the bounds obtained in~\cite{CurienJFLG}. 

\section{Modification of distances in simple triangulations}\label{sec:modDistances}
The goal of this section is to give the proof of Theorem~\ref{th:LGC_simple}. To do that, we follow closely the work of~Curien and Le Gall~\cite{CurienJFLG} and rely heavily on the skeleton decomposition of quasi-simple triangulations presented in the previous section. 

\index{l@\textbf{modification of distances}!distr@\ttt{$\nu$}{weight distribution on $[\eta_0,\infty)$ for $(w_e)_{e\in E(\map^\dagger)}$}}
Throughout this section, we assume that $\nu$ is a probability distribution with support included in $[\eta_0,\infty)$ for a given $\eta_0>0$, and that there exist constants $A,\lambda>0$ such that, for $X$ a random variable with law $\nu$:
\[
    \mathbb{P}\left(X \geq K\right)\leq A\exp(-\lambda K),\quad \text{ for any }K\in \R_+.
\]
Moreover, we assume that any planar map $\map$ (random or not) considered in this section is equipped with a collection $(w_e)_{e\in E(\map^\dagger)}$ of random weights, such that $(w_e)$ are i.i.d random variables sampled from $\nu$. Recall from the introduction that we defined the first-passage percolation distance $\dstar$ on $\map^\dagger$ by setting:
\index{l@\textbf{modification of distances}!fpp@\ttt{$\dstar_\nu = \dstar$}{first-passage percolation distance on $\map^\dagger$}}
\begin{equation}
\dstar_\nu(u,v)=\dstar(u,v):=\inf_{\gamma: u\rightarrow v}\sum_{e\in \gamma}w_e,\quad \text{ for }u,v \in V(\map^\dagger)
\end{equation}
where the infimum runs over all paths $\gamma$ going from $u$ to $v$ in $\map^\dagger$. For the sake of simplicity, we write $\dstar$ without emphasizing its dependency in $\nu$, it should not bear any confusion. 
\bigskip

Since the skeleton decomposition of quasi-simple triangulations gives the exact same branching process as the skeleton decomposition of general triangulations, many results of~\cite{CurienJFLG} can be applied verbatim to our setting. When this is the case, we do not duplicate the proofs. Rather, we only state the results for which the proofs need to be adapted. Note also that in~\cite{CurienJFLG}, the authors focus first on the case of modifications of distances in triangulations (and not on their dual). They then treat the dual case without giving as many details as for the primal case. 

\begin{remark}
Theorem~\ref{th:LGC_simple} and its proof could be extended verbatim to deal with modification of distances on simple triangulations rather than on their dual as was initially done in~\cite{CurienJFLG}. 
However, for sake of conciseness, we only give the proof of this result in the dual setting. \dotfill
\end{remark}

\subsection{Upper bounds in the infinite models}

\index{k@\textbf{half-plane models}!zdelta@\ttt{$\mathfrak{L}^s_j$}{the path connecting the vertices of $\mathfrak{L}^s$ at depth $-j$}}
\index{k@\textbf{half-plane models}!zdeltaF@\ttt{$F_j(\mathfrak{L}^s)$}{the downward triangles incident to an edge of $\mathfrak{L}^s_j$}}
\index{k@\textbf{half-plane models}!zdeltaroot@\ttt{$f_{0,0}$}{the downward triangle of $\mathfrak{L}^s_j$ incident to its root edge}}
The purpose of this section is to derive statements similar to Lemma~24 and Proposition~25 of~\cite{CurienJFLG} for the lower half-plane simple triangulation $\mathfrak{L}^s$. 
For every $j\leq 0$, let $\mathfrak{L}_j^s$ for the vertices of $\mathfrak{L}^s$ that are the vertices of depth $-j$ of its skeleton; or rather, $\mathfrak{L}_j^s$ is the set (forming an infinite simple path) of all edges of $\mathfrak L^s$ that connect these vertices --- it is the analogue of $\partial_{r+j}(\Delta)$ when $\Delta$ is a quasi-simple triangulation of the cylinder of height $r$. We also denote by $\mathfrak{L}^{s,\dagger}_{j}$ the set of all downward triangles incident to an edge of $\mathfrak{L}^s_j$, and by $f_{0,0}$ the face in $\mathfrak{L}^{s,\dagger}_{0}$ that is incident to the root edge of $\mathfrak{L}^s$. 

\begin{figure}
\begin{center}
\includegraphics[width=9cm]{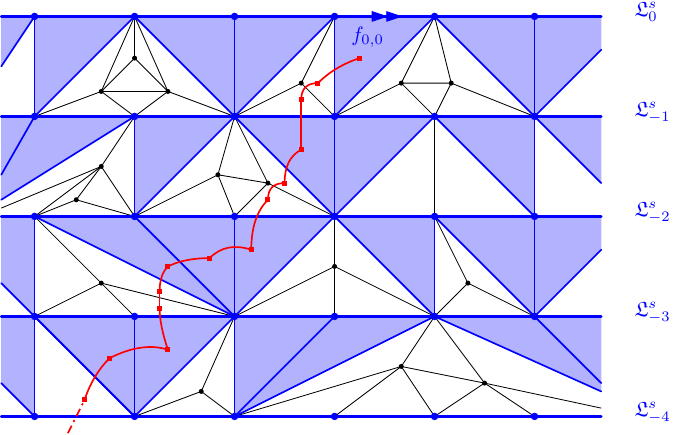}
\end{center}
\caption{Representation of the first 4 layers of $\mathfrak{L}^s$. The downward path started from $f_{0,0}$ is represented in fat red edges.}
\label{fig:LHPs}
\end{figure}

For every downward triangle $f$ in $\mathfrak{L}^{s,\dagger}_{j}$, the downward path from $f$ is an infinite path in the dual map, defined as follows (see \cite[Section 7.2]{CurienJFLG} for a precise definition): starting from $f$, and if $v$ is the vertex of $f$ that is not an endpoint of an edge of $\mathfrak{L}^s_j$, turn around $v$ in counterclockwise order until crossing an edge of $\mathfrak{L}_{j-1}^s$. The face we reach is itself in $\mathfrak{L}^{s,\dagger}_{j-1}$: we continue inductively with the downward path starting from this latter face.

\begin{lemma}\label{lem:expDegree}
For $r\geq 1$, let $\gamma_r$ be the downward path in $\mathfrak{L}^s$ connecting $f_{0,0}$ to a downward triangle incident to $\mathfrak{L}^s_{-r}$, and write $|\gamma_r|$ for the length of $\gamma_r$. There exist two constants $\mu>0$ and $K<\infty$ such that for every integer $r\geq 1$: 
\[
    \mathbf{E}[\exp(\mu\,|\gamma_r|)]\leq K^r.
\]
\end{lemma}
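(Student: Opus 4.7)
The strategy is to decompose $|w_r|$ as a layer-by-layer sum $|w_r|=\sum_{j=1}^r Y_j$, where $Y_j$ is the number of edges of the downward path crossed during its passage from layer $\mathfrak{L}^s_{-(j-1)}$ to layer $\mathfrak{L}^s_{-j}$. By construction of the downward path, $Y_j$ is essentially the number of slot-faces between those two layers that are incident to the turning vertex $v_j\in\mathfrak{L}^s_{-(j-1)}$ on the relevant side. Letting $\mathcal{F}_{j-1}$ denote the $\sigma$-algebra generated by the portion of $\mathfrak{L}^s$ strictly above layer $-(j-1)$, the desired bound $\mathbf{E}[e^{\mu|w_r|}]\leq K^r$ reduces by iterated conditioning to proving that for some $\mu>0$ and $K<\infty$ one has $\mathbf{E}[e^{\mu Y_j}\mid \mathcal{F}_{j-1}]\leq K$, uniformly in $j$ and in $\mathcal{F}_{j-1}$.

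The cleanest route is to exploit the shared-skeleton coupling of $\mathfrak{L}$ with $\mathfrak{L}^s$ recalled in Section~\ref{sub:coupling}. In that coupling the two half-plane models share exactly the same skeleton forest, and each simple slot of $\mathfrak{L}^s$ is obtained from the corresponding general slot of $\mathfrak{L}$ by iteratively collapsing separating 2-cycles. Collapsing a 2-cycle removes the digon face it bounds and identifies its two edges, so this operation can only decrease the number of faces incident to any given vertex while preserving the cyclic face-order around that vertex up to the removed faces. Since the skeleton, the downward triangles, and their incidences with the layer edges are identical in both models, the downward path in $\mathfrak{L}^s$ traverses at each layer a cyclic-order subset of the faces traversed by the downward path in $\mathfrak{L}$. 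Therefore $|w_r|$ in $\mathfrak{L}^s$ is pathwise dominated by its analogue in $\mathfrak{L}$, and the lemma follows immediately from the corresponding estimate in the general setting, Lemma~24 of~\cite{CurienJFLG}.

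The hard part will be to make the pointwise-domination statement above fully rigorous: one has to check that after iterated collapsing of all separating 2-cycles, the cyclic order of slot-faces around $v_j$ in $\mathfrak{L}^s$ is genuinely the restriction of the corresponding cyclic order in $\mathfrak{L}$, and that the starting face $f_{0,0}$ and the exit edges into layer $-j$ match on both sides (the latter being clear, since these objects lie in the skeleton, which is shared). Should this pointwise domination prove too subtle, a direct approach is available: the offspring distribution $\theta$ from Remark~\ref{rem:branching} is analytic strictly beyond $x=1$ and so has exponential moments, while a uniform-in-$p$ exponential tail for the degree of a marked boundary vertex in a critical Boltzmann simple triangulation of the $p$-gon can be extracted from Brown's explicit form of $T_p(x)$ and a standard singular/saddle-point analysis near $x=27/256$. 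Combining the two by conditioning on the skeleton then yields the required uniform bound on $\mathbf{E}[e^{\mu Y_j}\mid\mathcal{F}_{j-1}]$.
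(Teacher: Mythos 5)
Your proposal is correct and takes the same route as the paper: use the shared-skeleton coupling from Section~\ref{sub:coupling}, under which the downward path in $\mathfrak{L}^s$ is pathwise dominated by the downward path in $\mathfrak{L}$ (since passing to the simple core of each slot can only reduce the number of faces the path turns through), and then invoke Lemma~24 of Curien--Le~Gall. The paper states this in one line without spelling out the pointwise domination, so your more detailed justification of the face-order argument is a helpful elaboration rather than a different proof; the direct backup route you sketch is not needed.
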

\begin{proof}
The result follows from Lemma~24 of~\cite{CurienJFLG}, and from the coupling described in Section~\ref{sub:coupling}, under which the length 
of the downward path can only decrease. 
\end{proof}

For every $j\leq 0$, let $\mathfrak{L}^{s,\dagger}_{j}$ be the set of downward triangles incident to an edge of $\mathfrak{L}^s_{j}$ in the lower half-plane model. We assume that $\mathfrak{L}^s$ is equipped with the f.p.p distance $\dstar$ defined above (the dual map is infinite here, its vertices corresponding to the triangular faces of $\mathfrak{L}^s$, and its edges corresponding to the edges in $\mathfrak{L}^s$ with a triangular face on each side). 
Then, Proposition~25 of~\cite{CurienJFLG} can be readily adapted into the following result: 
\begin{proposition}
Recall that the support of $\nu$ is included in $[\eta_0,\infty)$. There exists a constant $c_\nu\geq 2\eta_0$ such that: 
\[
    r^{-1}\dstar(f_{0,0},\mathfrak{L}^{s,\dagger}_{-r})\xrightarrow[r\rightarrow \infty]{a.s.}c_\nu.
\]
\end{proposition}
As in the original article, the result follows directly from a subadditive argument together with the fact that $\E[\dstar(f_{0,0},\mathfrak{L}^{s,\dagger}_{-r})]<\infty$ by Lemma~\ref{lem:expDegree} and $\dstar(f_{0,0},\mathfrak{L}^{s,\dagger}_{-r})\geq 2\eta_0 r$ by construction.
\bigskip 

Thanks to Lemma~\ref{lem:expDegree}, the proof of Lemma~26 and Corollary~27 of~\cite{CurienJFLG} can directly be extended to our setting. We obtain:
% \begin{lemma}\label{lem:2627}
% In the quasi-simple UIPT of the 1-gon $\mathfrak{q}_{\infty}^{(1)}$, for any $\nu$ satisfying the conditions stated at the beginning of the section, there exist positive constants $K,\alpha,\beta$ such that, for any integer $0\leq r<s$:
% \begin{equation}
% \mathbb{P}\left(f_s,B_{r}^\bullet(\mathfrak{q}_{\infty}^{(1)})>\alpha (s-r)\right)\leq K \exp(-\beta(s-r)), 
% \end{equation}
% where $f_s$ is the downward triangle at height $s$ chosen uniformly at random, and where $B_0^\bullet(\mathfrak{q}_{\infty}^{(1)})$ should be interpreted as the bottom face.
% \end{lemma}
\begin{lemma}\label{lem:2627}
Consider the quasi-simple UIPT of the 1-gon $\mathfrak{q}_{\infty}^{(1)}$, with i.i.d weights on its edges sampled from $\nu$. There exist positive constants $K',\alpha_0,\beta_0$ such that, for any integer $0\leq r<s$:
\begin{equation}
\mathbb{P}\left( \dstar (f_s,B_{r}^\bullet(\mathfrak{q}_{\infty}^{(1)}))>\alpha_0 (s-r)\right)\leq K' \exp(-\beta_0(s-r)), 
\end{equation}
where $f_s$ is a downward triangle at height $s$ chosen uniformly at random, and where $B_0^\bullet(\mathfrak{q}_{\infty}^{(1)})$ should be interpreted as the bottom face.

Moreover, for any fixed $\delta>0$, define $A_R(\delta)$ to be the event where the bound 
\begin{equation}
\dstar (f,B_r^\bullet(\mathfrak{q}_{\infty}^{(1)}))\leq \alpha_0 (s-r)
\end{equation}
holds for every $0\leq r<s\leq R$ with $s-r\geq \delta R$, and for every downward triangle $f$ at height $s$. Then, there exists another positive constant $\tilde \beta$ such that for any sufficiently large $R$, 
\begin{equation}
\mathbb{P}(A_R(\delta))\geq 1-e^{-\tilde\beta R}.
\end{equation}
\end{lemma}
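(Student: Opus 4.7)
The plan is to follow the strategy of Lemma~26 and Corollary~27 in~\cite{CurienJFLG}, the essential input being Lemma~\ref{lem:expDegree} combined with the exponential tail of $\nu$. Starting from a downward triangle $f_s$ incident to $\partial_s$, one constructs a \emph{downward path} in the dual of $\mathfrak{q}_{\infty}^{(1)}$ exactly as in the definition preceding Lemma~\ref{lem:expDegree}: at each step, turn around the ``tip'' vertex until crossing an edge of the next lower skeleton cycle. Iterating yields a path $w = w(f_s;r)$ of dual edges connecting $f_s$ to a face incident to $\partial_r$, hence
\[
\dstar\!\left(f_s,\, B_r^{\bullet}(\mathfrak{q}_{\infty}^{(1)})\right) \leq \sum_{e \in w} w_e.
\]
The crucial observation is that the portion of $\mathfrak{q}_{\infty}^{(1)}$ lying between heights $r$ and $s$, explored downward starting from $f_s$, can be coupled with the first $s-r$ layers of $\mathfrak{L}^s$ explored downward from $f_{0,0}$, since both are encoded by the same branching process (cf.\ Remark~\ref{rem:branching} and Section~\ref{subsub:halfPlaneSimpleModels}). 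Consequently, the law of $|w|$ is dominated by that of $|w_{s-r}|$ in $\mathfrak{L}^s$, so Lemma~\ref{lem:expDegree} gives $\mathbf{E}[e^{\mu |w|}] \leq K^{s-r}$ for the constants $\mu, K$ it provides.

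Given this, the first assertion follows from a Chernoff-type estimate. Let $\phi(\lambda) = \mathbf{E}[e^{\lambda X}]$ for $X \sim \nu$, finite for $\lambda$ in a neighborhood of $0$ by the exponential tail hypothesis. Conditioning on $w$ and using the independence of the weights from $w$,
\[
\mathbf{E}\!\left[\exp\!\Big(\lambda \sum_{e\in w} w_e\Big)\right] = \mathbf{E}\!\left[\phi(\lambda)^{|w|}\right] = \mathbf{E}\!\left[e^{|w| \log \phi(\lambda)}\right].
\]
Choosing $\lambda > 0$ small enough that $\log \phi(\lambda) \leq \mu$, this is bounded by $K^{s-r}$. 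Markov's inequality then yields
\[
\mathbb{P}\!\left(\dstar(f_s, B_r^\bullet) > \alpha_0(s-r)\right) \leq e^{-\lambda \alpha_0 (s-r)} K^{s-r},
\]
and picking $\alpha_0$ large enough that $\lambda \alpha_0 > \log K$ produces the claimed exponential decay in $s-r$, with some $\beta_0 > 0$.

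For the uniform statement about $A_R(\delta)$, take a union bound over all pairs $(r,s)$ with $0\leq r<s\leq R$ and $s-r\geq \delta R$, and over all downward triangles $f$ incident to $\partial_s$. The number of such pairs is $O(R^2)$, and the length $|\partial_s|$ of the boundary at height $s$ has polynomial tails uniformly in $s\leq R$ (this is a classical property of the skeleton branching process, used in the same way in~\cite[Section~7]{CurienJFLG}), so that $\max_{s\leq R}|\partial_s|\leq R^C$ except on an event of probability $o(e^{-\tilde\beta R})$ for any $\tilde\beta$. Combining with the exponentially small bound in $s-r\geq \delta R$ from the first part, one gets $\mathbb{P}(A_R(\delta)^c) \leq R^{C'}e^{-\beta_0 \delta R}$, which is $\leq e^{-\tilde\beta R}$ for large $R$ and any $\tilde\beta<\beta_0\delta$.

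The one point requiring care is the coupling invoked in the first paragraph: the law of the downward path from a \emph{fixed} downward triangle at height $s$ in $\mathfrak{q}_{\infty}^{(1)}$ must be identified (or dominated) by that of the downward path from $f_{0,0}$ in $\mathfrak{L}^s$. The underlying re-rooting invariance at a uniformly chosen downward triangle at height $s$ is handled in~\cite[Section~7.2]{CurienJFLG}, and since the skeleton of quasi-simple triangulations is governed by exactly the same branching process $\theta$ as in the general case, the argument transfers without modification; only the slot-filling law changes, and it enters only through Lemma~\ref{lem:expDegree}, which has already been verified in the simple setting.
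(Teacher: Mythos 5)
Your proof is correct and follows the same route as the paper, which simply asserts that Lemma~26 and Corollary~27 of~\cite{CurienJFLG} extend directly once Lemma~\ref{lem:expDegree} is in place; you have spelled out what that transfer amounts to (downward-path construction, coupling with $\mathfrak{L}^s$ via the common skeleton branching process, Chernoff bound using the exponential tail of $\nu$, and a union bound over scales and over downward triangles controlled by perimeter tails). The only loose spot is the phrase ``polynomial tails'' for $|\partial_s|$: what is actually needed (and available from the skeleton branching process, as in~\cite{CurienJFLG}) is an exponential-type tail for $|\partial_s|/s^2$, giving $\mathbb{P}(\max_{s\le R}|\partial_s|>R^C)$ exponentially small in $R$; your stated conclusion is the right one, but it does not follow from a merely polynomial tail bound.
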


\subsection{Upper bound in finite simple triangulations}
We finally turn our attention to finite triangulations. The purpose of this section is to obtain a result equivalent to \cite[Lemma~28]{CurienJFLG} but for uniform simple triangulations rather than uniform general triangulations and for the $\dstar$ distance rather than the graph distance or the Eden model. 

\index{e@\textbf{maps and graphs}!inc@\ttt{$v\triangleleft f$}{vertex $v$ and face $f$ are incident}}
 For $x\in V(\mathrm{t}_n)$ and $f\in F(\mathrm{t}_n)$, write $x \triangleleft f$ to mean that the vertex $x$ is incident to the face $f$. Then we have: 
\begin{proposition}\label{prop:lemmm28}
 Let $\alpha_0$ be as in Lemma~\ref{lem:2627}. Let $\tn$ be a uniform rooted simple triangulation with $n+2$ vertices. For $\varepsilon \in (0,1/4)$, define $E_n$ as the event where the bound
 \begin{equation}\label{eq:upperboundFinite}
 \dstar(f,g)\leq \alpha_0 \dgr(x,y) + n^{\varepsilon}
 \end{equation}
 holds for every $x,y \in V(\mathrm{t}_n)$ and $f,g\in F(\mathrm{t}_n)$ such that $x\triangleleft f$ and $y\triangleleft g$. Then: 
 \begin{equation}
 \mathbb{P}(E_n)\xrightarrow[n\rightarrow \infty]{} 1.
 \end{equation}
The same result holds when replacing $\tn$ by $\qn^{(1)}$, where $\qn^{(1)}$ is a uniform quasi-simple triangulation of the 1-gon with $n$ inner vertices. 
\end{proposition} 
\begin{proof}
The main difficulty that arises in our setting and does not exist in~\cite{CurienJFLG} is that we have to derive results for simple triangulations, whereas we established (in particular in the previous section) results about the local limit of \emph{quasi-simple} triangulations. Fortunately, these two models are similar enough to obtain the results we need. 

In this proof, we always consider that the random planar maps are endowed with random weights on their edges, sampled independently under a law $\nu$ with support included in $[\eta_0,\infty)$ for some $\eta_0>0$, and with exponential tails. 
Let $\qn^{(1)}$ be a uniform quasi-simple triangulation of the $1$-gon with $n$ inner vertices. Following the proof of~\cite[Lemma~28]{CurienJFLG}, we write $\rho_n$ for its root vertex, $o_n$ for its pointed vertex and set $d_n:=\dgr(\rho_n,o_n)$. Then, we write $\Theta$ for the set of quasi-simple triangulations $\mathrm t$ that belong to $\mathcal Z_{1,r}$ for some $r>1$ and are such that there exists a face $f$ incident to $\partial^*\mathrm t$ whose $\dstar$ from the bottom face is greater than $\alpha r$. By the exact same chain of arguments as in the proof of~\cite[Lemma~28]{CurienJFLG} that leads to their equation (73), we obtain that there exist two constants $c>0$ and $a>0$, such that: 
\begin{equation}\label{eq:ballNice}
\mathbb{P}\left(d_n>n^{\epsilon}\,;\,B_{d_n-1}^\bullet(\qn^{(1)})\in \Theta\right)\leq c\exp(-an^{\epsilon}). 
\end{equation}
By, again, the same line of arguments as in~\cite{CurienJFLG} and given the definition of $\Theta$, it is then easy to see that on the event $\{d_n>n^\epsilon;B_{d_n-1}^\bullet(\qn^{(1)})\notin \Theta\}$, we have $\dstar(f,g)\leq \alpha d_n+3w^*\cdot\maxdeg(\qn^{(1)})$, for any $f,g\in \qn^{(1)}$, with $\rho_n\triangleleft f$ and $o_n\triangleleft g$ (where we denote by $\maxdeg$ the maximum vertex degree of a map, and by $w^*$ the maximal dual weight of an edge of $\tn$). 
% This result clearly also holds for the quasi-simple triangulation of the sphere via the mapping $\Psi$ illustrated in Figure~\ref{fig:exRoot1gon}. 
% we have $\dgr^\dagger(f,g)\leq \alpha d_n+3\maxdeg(\qn^{(1)}\})$, for any $f,g\in \mathfrak{q}_{n^{(1)}}$, with $\rho_n\triangleleft f$ and $o_n\triangleleft g$ (where we recall that $\maxdeg$ denotes the maximum vertex degree of a map). This result clearly also holds for the quasi-simple triangulation of the sphere obtained via the procedure $\Psi$ illustrated in Figure~\ref{fig:exRoot1gon}. 

The original proof continues with a rerooting argument, which cannot apply here since quasi-simple triangulation are not invariant under rerooting. However, we can transfer the bound obtained in~\eqref{eq:ballNice} from quasi-simple triangulations to simple triangulations (up to changing the value of the constant $c$) in the following way. The asymptotic enumerative results for simple triangulations and quasi-simple triangulations of the $1$-gon given respectively in~\eqref{eq:asymSimpleTrig} and~\eqref{eq:asymptoticQEnum} imply that there exists $\kappa>0$, such that: 
% \ctom{\begin{equation}\label{eq:proportionPos}
% \mathbb{P}\left(\Psi(\qn^{(1)})=t\right) > \kappa \mathbb{P}\left(\mathfrak{t}^\carre_{n-1}=t\right) \quad\text{for every }n \geq 1\text{ and }t\in \cTc_{n-1} ,
% \end{equation}
\begin{equation}\label{eq:propPos}
\mathbb{P}\left(\Psi(\qn^{(1)})\in \cTc_{n-1}\right)> \kappa, \quad\text{for any }n \geq 1,
\end{equation}
% {where $\mathfrak{t}^\carre_n$ is a uniformly random triangulation over $\cTc_n$ and} 
where we recall that $\cTc_n$ denotes the set of rooted simple triangulations with $n+2$ vertices and a marked inner vertex.

Moreover, conditionally on the fact that $\Psi(\qn^{(1)})\in \cTc_{n-1}$, it is clear from the construction that $\qn^{(1)}$ has the same distribution as a uniformly random element of $\cTc_{n-1}$ (as a random measured metric space) \footnote{Note that their dual are not exactly isometric due to the root-edge being doubled, but distances in the dual of both models differ at most by 1.}. 
 % the set of simple triangulations is clearly included in the image of the mapping $\Psi$, \ctom{--- ce début de phrase est redondant avec le changement proposé à \eqref{eq:proportionPos}, supprimer ---} \ceri{and $\Psi$ preserves the underlying metric space}.  

Let $\tn$ be uniformly random in $\cTc_n$. Write $\tilde \rho_n$ for its root vertex, $\tilde o_n$ for its pointed vertex and $\tilde d_n$ for $\dgr(\tilde \rho_n,\tilde o_n)$. It follows from the discussion above and from the discussion following~\eqref{eq:ballNice} that 
there exist two constants $\tilde c>0$ and $\tilde a>0$ such that:
\begin{equation}
\mathbb{P}\left(\tilde d_n>n^{\epsilon}\,;\,\dgr^\dagger(f,g)>\alpha_0 \tilde d_n+3w^\star\cdot\maxdeg(\tn) \text{ whenever }\tilde \rho_n\triangleleft f \text{ and }\tilde o_n \triangleleft g\right)\leq \tilde c \exp(-\tilde a n^{\epsilon}). 
\end{equation}
Now, by rerooting $\tn$ at another oriented edge $e_n$ chosen uniformly and independently from $o_n$, we obtain a pointed and rooted simple triangulation with the same distribution. Hence, the bound obtained above remains valid upon replacing $\tilde \rho_n$ by $\rho'_n$, where $\rho'_n$ is the initial vertex of $e_n$ and $\tilde d_n$ by $\dgr(\tilde o_n,\rho'_n)$.

We can then proceed as in the original proof to establish that: 
\begin{multline}
\E\left[\sum_{u,v\in V(\tn)}\mathbf{1}_{\{\dgr(u,v)>n^{\epsilon}\} }\mathbf{1}_{\{\dgr^{\dagger}(f,g)>\alpha_0 \dgr(u,v)+3w^\star\cdot\maxdeg(\tn)\text{ whenever }u\triangleleft f \text{ and }v\triangleleft g\}}\right]\\\leq 6(n+2)^2 \tilde c \exp(-\tilde a n^{\epsilon}).
\end{multline}
Hence, asymptotically almost surely, the following bound holds: 
\begin{equation}
\dstar(f,g)\leq \alpha_0 \dgr(u,v)+3w^\star\cdot\maxdeg(\tn),
\end{equation}
whenever $u,v\in V(\tn)$, $\dgr(u,v)>n^{\epsilon}$ and $u\triangleleft f$ and $v\triangleleft g$.

Since $\nu$ is assumed to have exponential tails, $w^\star$ can be bounded by $n^{\epsilon}$ outside a set of probability tending to 0 as $n\rightarrow \infty$. Besides, the geometric bound obtained for the degree of the root vertex in~\cite[Lemma~4.1]{AngelSchramm} gives the existence of a constant $A>0$ for which $\maxdeg(\tn)\leq A\log n$ with probability tending to 1 as $n\rightarrow\infty$.
This gives the desired result, except for the requirement that $\dgr(u,v)>n^{\epsilon}$. This case can be dealt with by hand using the fact that $\dstar(f,g)\leq w^\star\cdot\maxdeg(\tn)(\dgr(u,v)+1)$, which concludes the proof for simple triangulations.
\medskip

To finish the proof for quasi-simple triangulations, we rely on Lemmas~\ref{lem:largestSimple} and~\ref{lem:almostUniform}. Fix $\epsilon>0$ and recall that we write $T(\mathfrak q)$ for the largest simple component of a quasi-simple triangulation $\mathfrak q$. By Lemma~\ref{lem:largestSimple}, there exists $K\in \N$ such that: 
\begin{equation}
\mathbb{P}\left(|T(\qn^{(1)})| \geq n-K\right)\geq 1- \epsilon.
\end{equation}
We fix $k \in [0,K]$ and $|T(\qn^{(1)})|=n-k$. Then by Lemma~\ref{lem:almostUniform}, we can couple $T(\qn^{(1)})$ with a uniform simple triangulation $\mathfrak t_{n-k}$ of size $n-k$ such that $\mathbb{P}(T(\qn^{(1)})\neq\mathfrak t_{n-k})< \epsilon$. We extend this coupling to the model with weights on the edges and make sure that the weights also coincide when the triangulations are equal. 

By applying the result of the proposition for $\mathfrak t_{n-k}$, we know that outside a set of probability smaller than $\epsilon$, the bound~\eqref{eq:upperboundFinite} holds. This gives the result when $u,v\in T(\qn^{(1)})$. For $w\notin T(\qn^{(1)})$, we can use the easy fact that there exists $u\in T(\qn^{(1)})$ with $\dgr(u,w)\leq k$, which concludes the proof of the proposition. 
\end{proof}

\subsection{Two-point function}
Thanks to the bounds we obtained in the two previous sections, we are now ready to prove the equivalent of Proposition~21 of \cite{CurienJFLG}, stated below as Proposition~\ref{prop:2point}. Roughly speaking, this proposition establishes that the difference between the 2-point function for the graph distance and the $\dstar$ 2-point function is $o(n^{1/4})$. 

The proof of Proposition~\ref{prop:2point} relies on the following similar result obtained for the \emph{infinite} quasi-simple triangulation. Given the upper bounds established in Lemma~\ref{lem:2627}, we can apply verbatim the proof presented in Proposition 19 (in the primal case) and in Proposition 29 (in the dual case) of~\cite{CurienJFLG} (see the remark at the end of Section \ref{subsub:halfPlaneSimpleModels}) to readily obtain:

\begin{proposition}\label{prop:29}
Let $\epsilon>0$ and $\delta>0$. We can find $\eta \in (0,1/2)$ such that, for every sufficiently large $n$, we have: 
\begin{equation}
\mathbb{P}\left((1-\epsilon)c_{\nu}\lfloor \eta n\rfloor \leq \dstar\left(f,B_{n-\lfloor \eta n\rfloor}(\mathfrak{q}_\infty^{(1)})\right) \leq (1+\epsilon)c_{\nu}\lfloor \eta n\rfloor,\,\forall f\in \mathrm{F}_n(\mathfrak{q}_{\infty}^{(1)})\right) > 1-\delta ,
\end{equation}
where we recall that $\mathrm{F}_n(\mathfrak{q}_{\infty}^{(1)})$ is the set of downward triangles incident to $\partial_n(\mathfrak{q}_{\infty}^{(1)})$. 
Moreover
\begin{equation}
\mathbb{P}\left((c_{\nu}-\epsilon) n \leq \dstar\left(f,B_{0}(\mathfrak{q}_\infty^{(1)})\right) \leq (c_{\nu}+\epsilon) n,\,\forall f\in \mathrm{F}_n(\mathfrak{q}_{\infty}^{(1)})\right) \xrightarrow[n\rightarrow \infty]{}1.
\end{equation}
\end{proposition}

To transfer this result to finite simple triangulations, we rely on the following absolute continuity relation between finite and infinite quasi-simple triangulations (analogous to~\cite[Lemma~22]{CurienJFLG}). Recall that $\cyl 1 r$ denotes the set of quasi-simple triangulation of the cylinder of height $r$ and rooted on a loop. For $\Delta\in \cyl 1 r$, we denote by $|\Delta|$ the number of inner vertices of $\Delta$. Recall that $\mathfrak q_n^{(1)}$ denotes the random quasi-simple triangulation of the 1-gon with $n$ inner vertices. 
Then, we have:
\begin{lemma}\label{ref:absContinuite}
There exists a constant $\bar c$, such that for every $n\geq 1$, for every $r\geq 1$ and every $\Delta \in \cyl 1 r$ such that $n>|\Delta|$, 
\begin{equation}
\mathbb{P}\left(B_r^\bullet(\mathfrak q_n^{(1)})=\Delta\right) \leq \bar c \left(\frac{n}{n-|\Delta|}\right)^{3/2}\mathbb{P}\left(B_r^\bullet(\mathfrak{q}_\infty^{(1)})=\Delta\right). 
\end{equation}
\end{lemma}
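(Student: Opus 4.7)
The plan is to compare exact counts. Setting $q = |\partial^{*} \Delta|$, the event $\{B_r^\bullet(\mathfrak{q}_n^{(1)}) = \Delta\}$ occurs precisely when the region above $\partial^*\Delta$ is filled by a quasi-simple triangulation of the $q$-gon whose marked inner vertex is the pointed vertex of $\mathfrak{q}_n^{(1)}$. Counting inner vertices (there are $n - (|\Delta| - 1) = n - |\Delta| + 1$ outside $\Delta$, since the unique boundary vertex of the $1$-gon lies in $\Delta$), this yields the exact identity already exploited in the proof of Proposition~\ref{prop:forestHull},
\[
\mathbb{P}\bigl(B_r^\bullet(\mathfrak{q}_n^{(1)}) = \Delta\bigr) = \frac{|\mathcal{Q}_{n - |\Delta| + 1,\, q}|}{|\mathcal{Q}_{n, 1}|}.
\]

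Next, I would bound numerator and denominator separately. For the numerator, Lemma~\ref{lem:unifBounds} gives directly
\[
|\mathcal{Q}_{n - |\Delta| + 1,\, q}| \leq c\, C(q)\, (n - |\Delta| + 1)^{-3/2}\, (256/27)^{n - |\Delta| + 1}.
\]
For the denominator, the asymptotic equivalent~\eqref{eq:asymptoticQEnum} combined with the fact that $|\mathcal{Q}_{n, 1}| > 0$ for every $n \geq 1$ yields a uniform lower bound $|\mathcal{Q}_{n, 1}| \geq c_1\, C(1)\, (256/27)^n\, n^{-3/2}$ for some $c_1 > 0$.

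Taking the ratio, the exponential prefactors telescope into $(256/27)^{1 - |\Delta|} = \rho^{|\Delta| - 1}$, so
\[
\mathbb{P}\bigl(B_r^\bullet(\mathfrak{q}_n^{(1)}) = \Delta\bigr) \leq \frac{c}{c_1} \cdot \frac{C(q)}{C(1)}\, \rho^{|\Delta| - 1} \left(\frac{n}{n - |\Delta| + 1}\right)^{3/2}.
\]
The key observation is that the proof of Proposition~\ref{prop:forestHull} identifies $\frac{C(q)}{C(1)} \rho^{|\Delta| - 1}$ as exactly $\mathbb{P}(B_r^\bullet(\mathfrak{q}_\infty^{(1)}) = \Delta)$, since it is the limiting value of the same ratio of enumerative constants as $n \to \infty$. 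Finally, $n > |\Delta|$ gives $n - |\Delta| + 1 \geq n - |\Delta| \geq 1$, whence $\bigl(n/(n - |\Delta| + 1)\bigr)^{3/2} \leq \bigl(n/(n - |\Delta|)\bigr)^{3/2}$, concluding the argument with $\bar c = c/c_1$.

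I do not anticipate any serious obstacle: the argument mirrors~\cite[Lemma~22]{CurienJFLG}, the only visible difference being the polynomial exponent $3/2$ (versus $5/2$ there), which reflects the fact that quasi-simple triangulations are intrinsically rooted \emph{and} pointed, so their enumeration carries the $n^{-3/2}$ correction of~\eqref{eq:asymptoticQEnum} rather than the $n^{-5/2}$ typical of purely rooted planar maps. The only minor point requiring care is to ensure the constants $c, c_1$ are uniform in $n \geq 1$ (not merely valid for large $n$), which follows at once from the strict positivity of $|\mathcal{Q}_{n, 1}|$ for every $n \geq 1$.
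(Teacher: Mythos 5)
Your proof is correct and takes essentially the same route as the paper: the exact ratio identity from \eqref{eq:probaDelta}, the uniform upper bound of Lemma~\ref{lem:unifBounds} on the numerator, a uniform lower bound on $|\mathcal{Q}_{n,1}|$, and the identification of the limiting ratio $\frac{C(q)}{C(1)}\rho^{|\Delta|-1}$ with $\mathbb{P}(B_r^\bullet(\mathfrak{q}_\infty^{(1)})=\Delta)$ via the local limit. The only cosmetic difference is that you obtain the lower bound on $|\mathcal{Q}_{n,1}|$ from the asymptotic \eqref{eq:asymptoticQEnum} together with strict positivity rather than directly from the explicit formula \eqref{eq:qsimpleTrigEnum}; both are valid.
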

\begin{proof}  
The result follows directly from the uniform upper bound obtained in Lemma~\ref{lem:unifBounds}, from a similar lower bound on $|\kQ_{n,1}|$ that we deduce from \eqref{eq:qsimpleTrigEnum}, and from the limit of \eqref{eq:probaDelta} together with the fact that $\mathfrak{q}_\infty^{(1)}$ is the local limit of $\qn^{(1)}$ as $n\rightarrow \infty$.
% Thomas -- version précédente du paragraphe :
%
%The result follows directly from the asymptotic enumerative formulas for quasi-simple triangulations obtained in \eqref{eq:asymptoticQEnum}, from the uniform upper bound obtained in Lemma~\ref{lem:unifBounds}, \ctom{the lower bound $|\kQ_{n,1}| \geq c C(1) (256/27)^n n^{-3/2}$ that we obtain from the explicit formula for $|\kQ_{n,1}|$, the limit of \eqref{eq:probaDelta}} and finally from the fact that $\mathfrak{q}_\infty^{(1)}$ is the local limit of $\qn^{(1)}$ as $n\rightarrow \infty$. 
\end{proof}

We can now state the main result of this section:
\begin{proposition}\label{prop:2point}
Denote by $f_*$ the bottom face of $\qn^{(1)}$. Let $o_n$ be a uniformly distributed inner vertex of $\qn^{(1)}$ and let $f_n$ be a face incident to $o_n$ (fixed in some deterministic manner given $o_n$). Then, for any $\epsilon>0$, 
\begin{equation}\label{eq:2pointIndependant}
\mathbb{P}\left(|\dstar(f_n,f_*)-c_\nu \dgr(o_n,\rho_n)|>\epsilon n^{1/4}\right)\xrightarrow[n\rightarrow \infty]{}0. 
\end{equation}
The same result holds also if we replace $\qn^{(1)}$ by a random rooted simple triangulation having $n+2$ vertices and a random marked vertex $o_n$, 
and $f_*$ by its root face. 
\end{proposition}
\begin{proof}
Again, we follow closely the proof of~\cite[Proposition~21]{CurienJFLG}. Only two modifications must be made. First, if we follow verbatim their proof, then at the very end, we need to adapt the upper bound they obtain. Indeed, at this point they use the fact that the first-passage distance on the primal graph (i.e on the triangulation) is upper-bounded by the graph distance on the primal. In particular, they fix three close values $\alpha_j<\beta_j<\gamma_j$ and restrict their attention to the event $\{\beta_j n^{1/4}\leq \dgr(\rho_n,o_n)\leq \gamma_j n^{1/4}\}$. Then, they can give a deterministic upper bound for the first-passage percolation distance between $o_n$ and the hull of radius $\lfloor \alpha_j n^{1/4}\rfloor$. This last step does not apply directly anymore when working on the dual graph, but can be made valid if we replace the deterministic bound by the probabilistic bound obtained in Proposition~\ref{prop:lemmm28}. 

Secondly, write $\mathfrak{B}_r(\qn^{(1)},o_n)$ for the ball of radius $r$ centered at $o_n$ in $\qn^{(1)}$. We have to establish the following analogous statement of \cite[(62)]{CurienJFLG}, and prove that for any $y<1$, there exists $b\in(0,1)$ such that: 
\begin{equation}
\liminf_{n\rightarrow \infty}\mathbb{P}\left(|\mathfrak{B}_{\lfloor(\beta-\alpha)n^{1/4}\rfloor}(\qn^{(1)},o_n)|>bn\right)\geq y. 
\end{equation}
Given the convergence of simple triangulations to the Brownian sphere in the Gromov--Hausdorff-Prokhorov topology~\cite{SimpleTrig}, this statement is known to be true for uniform simple triangulations. It can then be extended to quasi-simple triangulations, upon extracting their largest simple component and by applying Lemmas~\ref{lem:largestSimple} and~\ref{lem:almostUniform}. This concludes the proof for quasi-simple triangulations.
\medskip

To extend the result to simple triangulations, we rely on the same line of arguments as in the proof of Proposition~\ref{prop:lemmm28}, and particularly on~\eqref{eq:propPos}. 
It states that a positive proportion of quasi-simple triangulations are in fact simple triangulations (after applying the mapping $\Psi$, as depicted in Figure~\ref{fig:exRoot1gon}) 
% \ctom{--- Il y a le même flou là que vers \eqref{eq:proportionPos}, le même problème, et la même correction. Je rappelle que la précédente version de \eqref{eq:proportionPos} n'empêche pas, par exemple, que $\Psi$ envoie une grande masse des triangulations quasi-simples dans la toute petite masse des triangulations simples qui sont sympa et vérifient nos propriétés, alors que tout le reste des triangulations simples foirent. ---}. Moreover, 
Moreover, when $\Psi(\mathfrak{q}^{(1)}_n)$ is a simple triangulation, it yields a direct coupling between a random inner vertex of $\mathfrak{q}^{(1)}_n$ and of $\Psi(\mathfrak{q}^{(1)}_n)$. Since we have just shown that~\eqref{eq:2pointIndependant} holds for quasi-simple triangulations,
and since $\Psi$ preserves the underlying metric space, it has to hold for simple triangulations as well. 
\end{proof}

\subsection{Coupling between \texorpdfstring{$\mu_n$}{mun} and \texorpdfstring{$\mu_n^\dagger$}{mund} and proof of Theorem~\ref{th:LGC_simple}}

\index{g@\textbf{3-connected cubic planar graphs}!rand@\ttt{$\kn$}{uniform in $\kK_n$}}
\index{g@\textbf{3-connected cubic planar graphs}!rand@\ttt{$\kn$}{uniform in $\kK_n$}!GHP@equipped with $\dstar$ and the uniform measure $\mu_n^\dagger$ on $V(\kn)$}
\index{t@\textbf{rooted simple planar triangulations}!tglrand2@$\mathfrak{T}_n = (\tn,\dgr,\mu_n)$ where $\mu_n$ is the uniform measure on $V(\tn)$}
\begin{theorem}\label{th:correspondance}
Let $\tn$ be uniformly random in $\kT_n$. Consider the two following weighted metric spaces $\mathfrak{T}_n=(\tn,\dgr,\mu_n)$ and $\kn=(\tn^\dagger,\dstar,\mu_n^\dagger)$, where $\mu_n$ and $\mu_n^\dagger$ denote respectively the uniform measures on $V(\tn)$ and on $V(\tn^\dagger)$. 

\index{l@\textbf{modification of distances}!zcorr@\ttt{$C_n$}{correspondence between $(\tn,\dgr)$ and $(\tn^\dagger,\dstar)$}}
\index{l@\textbf{modification of distances}!zcoupl@\ttt{$\pi_n$}{coupling between $\mu_n$ and $\mu^\dagger$}}
Then, there exists an explicit correspondence $C_n$ between $(\tn,\dgr)$ and $(\tn^\dagger,\dstar)$ and an explicit coupling $\pi_n$ between $\mu_n$ and $\mu_n^\dagger$, such that:
\[
    \pi_n(C_n)\geq 1-\frac{2}{n+2} \qquad\text{ and }\qquad \dis(C_n)\leq \epsilon n^{1/4}.
\]
\end{theorem}
Before giving the proof of this theorem, notice that in view of the known convergence of simple triangulations towards the Brownian sphere established in~\cite{SimpleTrig}, it concludes the proof of Theorem~\ref{th:LGC_simple}. 

\begin{proof}
The correspondence $C_n$ is defined as the set of pairs:
\[C_n:=\{(v,f),\text{ with }v\in V(\tn),\, f\in F(\tn) \text{ and }v\triangleleft f\}.\]
Next, the coupling we construct between $\mu_n$ and $\mu_n^\dagger$ is based on the theory of Schnyder woods for simple triangulations~\cite{Schnyder}, that we now briefly recall. A simple triangulation $\mathfrak t$ can be canonically endowed by an orientation of its edges, such that for each $v\in V(\mathfrak{t})$, writing $\mathrm{out}(v)$ for the outdegree of $v$, we have:
\begin{equation}
\mathrm{out}(v)=
 \begin{cases}
 3&\text{if }v \text{ is not incident to the root face of }\mathfrak{t}\\
 1&\text{otherwise}.
\end{cases}
\end{equation} 
Note that to ensure that this orientation is canonical, it is enough to require that directed counterclockwise cycles are not allowed, but this is not relevant for our purpose. However, this canonical orientation enables to construct an explicit coupling between $\mu_n$ and $\mu_n^{\dagger}$. 

Letting $o_n$ be a random vertex of $\tn$, we associate to $o_n$ a uniform face of $\tn$ in the following way. If $o_n$ is not incident to the root face of $\tn$, then let $e_n$ be an edge chosen uniformly among one of the three outgoing edges from $o_n$. With probability 1/2, we pick $f_n$ to be the edge on the left of $e_n$, and otherwise $f_n$ is the edge on the right of $e_n$. 

Now, assume $o_n$ is incident to the root face of $\tn$. Then, with probability 1/3,  we let $e_n$ to be the unique outgoing edge from $o_n$, and with probability 2/3, we let $e_n$ to be a uniform edge of $\tn$. In both cases, we proceed as above. 

It is clear from the construction, that $e_n$ is a uniform edge of $\tn$. Since each face has degree~$3$, $f_n$ is then a uniform face of $\tn$. Moreover, apart on an event of probability at most $2/(n+2)$ (corresponding to the case where $o_n$ is incident to the root face and $e_n$ is sampled uniformly in $E(\tn)$), we have $o_n\triangleleft f_n$. 
\medskip

To conclude the proof of the theorem, it is then enough to prove that $\dis(C_n)\leq \epsilon n^{1/4}$, or in other words that: 
\begin{equation}\label{eq:supVF}
\mathbb{P}\left(\sup_{\substack{x,y\in V(\tn),f,g\in F(\tn)\\x\triangleleft f,y\triangleleft g}}|\dstar(f,g)-c_\nu \dgr(x,y)|>\epsilon n^{1/4}\right)\xrightarrow[n\rightarrow \infty]{}0. 
\end{equation}
To prove this last statement, we only need to adapt slightly the framework of~\cite[Theorem~1, p.680]{CurienJFLG}, and thus we only sketch the proof. 

We start by presenting the general strategy of the proof. First, thanks to a rerooting argument, we can extend the result for the 2-point function stated in Proposition~\ref{prop:2point} to control the pairwise distance between any finite number of random points. Next, we rely on the known convergence of simple triangulations to the Brownian sphere in the Gromov--Hausdorff--Prokhorov sense (established in~\cite{SimpleTrig}) to transfer the compactness of the Brownian sphere from the continuum world to the discrete world. This allows us to approximate the metric structure of a large simple random triangulation by restricting our attention to a (large) fixed number of uniform points. Lastly (and this is the only difference with the proof of~\cite[Theorem~1]{CurienJFLG}), Proposition~\ref{prop:lemmm28} extends this approximation result from a simple triangulation to its dual. 
\medskip 

Let us now state more formally those main steps. Let $\ft_n$ be a uniform element of $\kT_n$. Let $\Phi_n : V(\ft_n)\rightarrow F(\ft_n)$ be a deterministic function (given $\ft_n$) which associates to each vertex $v$ of $\ft_n$ a face $f$ with $v\triangleleft \Phi_n(v)$. Let $o'_n$ and $o''_n$ be two independent uniform vertices of $\ft_n$. 
By the same chain of arguments as in~\cite[p.680]{CurienJFLG}, it follows from Proposition~\ref{prop:2point} that for any $\epsilon>0$:
\begin{equation}\label{eq:2ptVertices}
\mathbb{P}\left(|\dstar(\Phi_n(o'_n),\Phi_n(o''_n))-c_\nu \dgr(o'_n,o''_n)|>\epsilon n^{1/4}\right)\xrightarrow[n\rightarrow \infty]{}0. 
\end{equation}

Now let $(o_n^i)_{i\geq 1}$ be a sequence of independent uniform vertices of $\ft_n$. For any $\delta>0$, there exists $N \geq 1 $ such that for every $n\in \N$: 
\begin{equation}\label{eq:compact}
\mathbb{P}\left(\sup_{x\in V(\ft_n)}\left(\inf_{1\leq j\leq N}\dgr(x,o_n^j)\right)<\epsilon n^{1/4}\right)>1-\delta.
\end{equation}
% For $N$ fixed, %so that the latter holds, 
% \eqref{eq:2ptVertices} implies that, for $n$ large enough: 
% \begin{equation}\label{eq:mesh}
% \mathbb{P}\left(\bigcap_{1\leq i\leq j \leq N}\left\{|\dstar(\Phi_n(o_n^i),\Phi_n(o_n^j))-c_\nu\dgr(o_n^i,o_n^j)|\right\}\leq \epsilon n^{1/4}\right)\geq 1-\delta.
% \end{equation}
By the triangle inequality, we can then write: 
% \begin{align*}
% &\sup_{\substack{x,y\in V(\tn),f,g\in F(\tn)\\x\triangleleft f,y\triangleleft g}}|\dstar(f,g)-c_\nu \dgr(x,y)| \\
% \leq &\quad\sup_{1\leq i\leq j \leq N}\left\{\Big|\dstar(\Phi_n(o_n^i),\Phi_n(o_n^j))-c_\nu\dgr(o_n^i,o_n^j)\Big|\right\}
% +2\sup_{x\in V(\ft_n)}\left(\inf_{1\leq j\leq N}\dgr(x,o_n^j)\right)\\
% &+2\sup_{f\in F(\ft_n)}\left(\inf_{1\leq j\leq N}\dstar(f,\Phi_n(o_n^j))\right)\\
% \end{align*}
\begin{multline*}
\sup_{\substack{x,y\in V(\tn),f,g\in F(\tn)\\x\triangleleft f,y\triangleleft g}}|\dstar(f,g)-c_\nu \dgr(x,y)| \\
\leq \quad\sup_{1\leq i\leq j \leq N}\left\{\Big|\dstar(\Phi_n(o_n^i),\Phi_n(o_n^j))-c_\nu\dgr(o_n^i,o_n^j)\Big|\right\}
+2\sup_{x\in V(\ft_n)}\left(\inf_{1\leq j\leq N}\dgr(x,o_n^j)\right)\\
\qquad \qquad \qquad \qquad +2\sup_{f\in F(\ft_n)}\left(\inf_{1\leq j\leq N}\dstar(f,\Phi_n(o_n^j))\right)\\
\end{multline*}
We take $N$ large enough so that~\eqref{eq:compact} holds. By Proposition~\ref{prop:lemmm28}, outside a set of probability at most $\delta$ and provided that $n$ is large enough, the last term in the upper bound can be bounded by $2\sup_{x\in V(\ft_n)}\left(\inf_{1\leq j\leq N}\dgr(x,o_n^j)\right)+n^{1/8}$. 
In view of~\eqref{eq:compact} and~\eqref{eq:2ptVertices} (applied simultaneously to all pairs $(o_n^i,o_n^j)$), this concludes the proof. 
\end{proof}

\section{Scaling limit of the 3-connected core with distances induced by the connected graph}\label{sec:two_point_dep}

Informally, the purpose of this section is to establish a result analogous to Proposition~\ref{prop:2point} (and then a result analogous to~\eqref{eq:supVF}),   
but where we relax the independent hypothesis for the weights on the edges. 
Recall from Section~\ref{sec:marked_3c} that $\kC_n^{(q)}$  is the set of cubic connected planar graphs 
of size $n$ having a marked $3$-connected component of size $q$,
 that $\cnq$ is a uniformly random element of $\kC_n^{(q)}$ (for $q>n/2$ it coincides with the random connected cubic 
 planar graph of size $n$ conditioned to have its $3$-connected core of size $q$). Similarly, $\kM_n^{(q)}$  is the set of cubic connected planar multigraphs 
of size $n$ having a marked $3$-connected component of size $q$, 
 and $\mnq$ denotes a uniformly random element of $\kM_n^{(q)}$ (for $q>n/2$ it coincides with the random connected cubic 
 planar multigraph of size $n$ conditioned to have its $3$-connected core of size $q$). Recall also that for $\mathrm g\in \kC_n$, we write $K(\mathrm g)$ for the 3-connected core of $\mathrm g$. 

 We aim at proving the following result (analogous to~\eqref{eq:supVF},
 with a dependence on the edge-lengths):
\index{g@\textbf{3-connected cubic planar graphs}!zdcore@\ttt{$\mathrm{dist}_{\mg}$}{distance on $K(\mg)$ induced by the graph distance on $\mg$}}
\begin{theorem}\label{th:scaling3connex}
Let $(q(n),n\geq 1)$ be such that $q(n)=\alpha n + O(n^{2/3})$ (where $\alpha\approx 0.85$ is defined in Proposition~\ref{prop:core} and with the informal notation introduced below Lemma~\ref{lem:size_core}). 
Write respectively $\mathrm{dist}_{\cn^{(q(n))}}$ and $\dgr$ for the distance on $K(\cn^{(q(n))})$ induced by the graph distance on $\cn^{(q(n))}$ and for the graph distance on $K(\cn^{(q(n))})$. 

Then, there exists a constant $c>0$ such that, for any $\eps>0$: 
\begin{equation}
\mathbb{P}\left(\sup_{x,y\in V(K(\cn^{(q(n))}))} |\mathrm{dist}_{K(\cn^{(q(n))})}(x,y)-c\, \dgr(x,y) | \geq \eps\, q^{1/4}\right) \xrightarrow[n\to\infty]{} 0
\end{equation}
A similar result holds for the random multigraph $\mn^{(q(n))}$, when $q(n)=\tfrac{199}{316}n+O(n^{2/3})$. 
\end{theorem}

We note that, given Theorem~\ref{thm:3connectedScaling}, this result ensures that the scaling limit of $K(\cn^{(q(n))})$ with  distances (rescaled by $c_1q^{1/4}$ for some $c_1\in(0,\infty)$) induced by the distances in $\cn^{(q(n))}$ is the Brownian sphere. Moreover, combined with Theorem~\ref{th:LGC_simple}, it gives the joint convergence of the two last coordinates in Theorem~\ref{th:convJointe}.
\medskip

To prove this theorem, we first establish the following similar result for the 2-point function (analogous to Proposition~\ref{prop:2point}, with a dependence on the edge-lengths). 
\begin{proposition}\label{th:2pointGraph}
Consider the same setting as in the previous theorem. 
Let $o,o'$ be two random vertices of $K(\cn^{(q(n))})$. Then, there exists a constant $c>0$ (the same as in Theorem~\ref{th:scaling3connex}) such that, for any $\eps>0$: 
\begin{equation}\label{eq:2pointGraphDist}
\mathbb{P}\big( |\mathrm{dist}_{\cn^{(q(n))}}(o,o')-c\, \dgr(o,o') | \geq \eps\, q^{1/4}\big) \xrightarrow[q\to\infty]{} 0
\end{equation}

A similar result holds for the random multigraph $\mn^{(q(n))}$, when $q(n)=\tfrac{199}{316}n+O(n^{2/3})$. 
\end{proposition}

Sections~\ref{sub:coupling_two} to~\ref{sub:proof2pointmod} will be devoted to the proof of this proposition. The proof of Theorem~\ref{th:scaling3connex} will then follow along the same lines as the part proving~\eqref{eq:supVF} in the proof of Theorem~\ref{th:correspondance} and is given in Section~\ref{sub:proofTheo3connexe}.

\index{g@\textbf{3-connected cubic planar graphs}!zzabuse@\ttt{$\cc_q = K(\cn^{(q)})$}{(abuse of notation in Section 7)}}
\index{g@\textbf{3-connected cubic planar graphs}!zzabusea@$\tKq=(\cc_q,\tdist) := (\cc_q, \dist_{\cn^{(q)}})$}
In the rest of Section~\ref{sec:two_point_dep}, we always assume that $n$ and $q(n)$ are related as in Proposition~\ref{th:2pointGraph}. To simplify notations, we drop the index $n$ and simply write $q$ for $q(n)$. 
Since $q(n)=\alpha n+O(n^{2/3})$, it makes sense to write $q\rightarrow \infty$ in place of $n\rightarrow \infty$.
Similarly, with a slight abuse of notation, in all this section, we write $\cc_q$ for $K(\cn^{(q(n))})$ and $\tKq:=(\cc_q,\tdist)$ for the metric space $(K(\cn^{(q(n))}), \mathrm{dist}_{\cn^{(q(n))}})$.

\index{g@\textbf{3-connected cubic planar graphs}!zzabuseb@\ttt{$\hKq=(\cc_q,\hdist)$}{$\cc_q$ equipped with $\dstar_{\nu_*}$ (see cubic networks for $\nu_*$)}}
On the other hand, recall that $\nustar$ is the law of the pole-distance of a critical Boltzmann cubic network as defined in Section~\ref{sec:pole_dista}. We define $\hKq:=(\cc_q,\hdist)$ to be the random metric graph $K(\cn^{(q(n))})$, equipped with the first-passage percolation distance obtained from $(\ell(e))_{e})$ where $(\ell_e)$ is a collection of i.i.d random variables sampled from $\nustar$. 
It follows from Lemma~\ref{lem:nustar} that $\nustar$ has exponential tails. Since the support of $\nustar$ is by definition included in $[1,\infty)$, all the results obtained in the previous section apply to $\hKq$. In particular, by applying Proposition~\ref{prop:2point} twice (once with $\nu$ equals to the Dirac measure on $\{1\}$ and once with $\nu:=\nustar$), there exists a constant $c>0$ such that, for $o,o'$ two random vertices in $\cc_q$ and for every $\eps>0$, we have
\begin{equation}\label{eq:hK}
\mathbb{P}\big( |  \hdist(o,o')-c\, \dgr(o,o') | \geq \eps\, q^{1/4}\big) \xrightarrow[q\to\infty]{} 0
\end{equation}
We emphasize that in fact this statement serves at the very definition of the constant $c>0$ that appears both in Theorem~\ref{th:scaling3connex} and in Proposition~\ref{th:2pointGraph}.
\medskip

To prove Proposition~\ref{th:2pointGraph},  
we are going to construct a coupling between $\hKq$ and $\tKq$, so as to minimize the the number of edges whose edge-lengths differ. 
We will then show that, under this coupling, the weighted distance between two random vertices $o,o'$ differs by $o(q^{1/4})$ in probability.
%such that the proportion of edges whose edge-length differs is $o(1)$ in probability. The result will then follow directly. 

\subsection{Definition of the coupling between \texorpdfstring{$\hKq$}{hKq} and \texorpdfstring{$\tKq$}{tKq}}\label{sub:coupling_two}
To define the coupling between $\hKq$ and $\tKq$, we first introduce a handful of notations. 

\index{g@\textbf{3-connected cubic planar graphs}!zzz@coupling between $\tKq$ and $\hKq$!zzcoupla@\ttt{$\Dist$}{maps from $\N_*^m$ to the sequences $(n_1, n_2, \dots)$ in $\N$ with $\sum_i n_i = m$}}
For $s\in \Z_{\geq 0}$, let $\Sigma_s$ to be the set of sequences $(n_1,n_2,\ldots)$ of nonnegative integers, such that $\sum_{i}n_i =s$. For $\bdelta=(\delta_1,\ldots,\delta_m)$ a sequence of positive integers, we write $N_i(\bdelta)=\#\{j\in[1..m],\ \delta_j=i\}$ and $N_{\geq i}(\bdelta)=\#\{j\in[1..m],\ \delta_j\geq i\}$ for any $i\geq 1$. Next, we define $\Dist(\bdelta)$ as the mapping from $\N_*^m$ to $\Sigma_m$, defined by: 
\[
\Dist(\bdelta):=(N_1(\bdelta),N_2(\bdelta),\ldots)
\]
i.e., $\Dist(\bdelta)$ gives at each position $i\geq 1$ the multiplicity of $i$ in $\bdelta$. 

Moreover, for $i\geq 1$, we denote $\bdelta^{(i)}:=\{j\in[1..m],\ \delta_j=i\}$ the set of indices at which the sequence is equal to $i$. Fix $q\in \Z_{\geq0}$, and let $\tilde \bn:=(\tilde n_1,\tilde n_2,\ldots)$ and $\hat \bn:=(\hat n_1,\hat n_2,\ldots)$ be two sequences of $\Sigma_{3q}$. As illustrated in Figure~\ref{fig:coupling}, 
it is easy to build canonically from $\hat\bn$ and $\tilde\bn$, two sequences
$\hat \bdelta=(\hdelta_1,\ldots,\hdelta_{3q})$ and $\tilde \bdelta=(\tdelta_1,\ldots,\tdelta_{3q})$ such that, for each $i\geq 1$, we have
\begin{equation}\label{eq:couplingSeq}
|\hat \bdelta^{(i)}|=\hat n_i,\ \ \ 
|\tilde \bdelta^{(i)}|=\tilde n_i,\ \ \ 
|\hat \bdelta^{(i)}\cap \tilde \bdelta^{(i)}|=\mathrm{min}(\hat n_i,\tilde n_i).
\end{equation}

\begin{figure}
\begin{center}
\includegraphics[width=12cm]{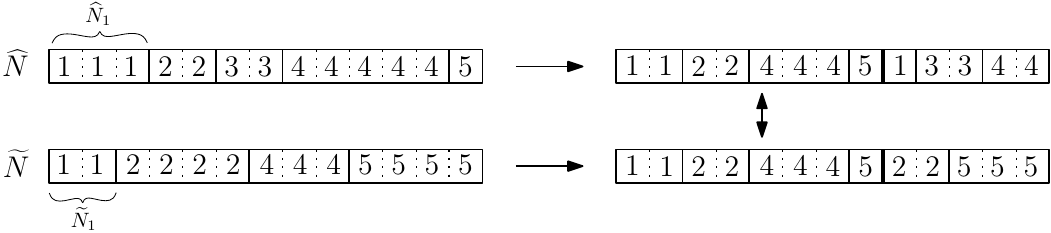}
\end{center}
\caption{The canonical rearrangement of two increasing sequences $\widehat{\bdelta},\widetilde{\bdelta}$, with $\Dist(\widehat{\bdelta})=\hat \bn$ and $\Dist(\widetilde{\bdelta})=\tilde  \bn$.}
\label{fig:coupling}
\end{figure}
\index{g@\textbf{3-connected cubic planar graphs}!zzz@coupling between $\tKq$ and $\hKq$!zzcouplb@\ttt{$\hNuq$}{distribution of $\Dist(\hdelta_1, \dots, \hdelta_{3q})$ with $(\hdelta_k)\sim \nu_*$ i.i.d.}}
For $q\geq 1$, let us now define two probability distributions on $\Sigma_{3q}$. Let $(\hdelta_1,\ldots,\hdelta_{3q})$ be a sequence of i.i.d random variables sampled from $\nustar$. We denote by $\hNuq$ the push-forward distribution on $\Sigma_{3q}$ via the mapping $\Dist(.)$, of the distribution of $(\hdelta_1,\ldots,\hdelta_{3q})$. 

Similarly, let $(\tdelta_1,\ldots,\tdelta_{3q})$ be the random vector giving the edge-lengths in $\tKq$ inherited from $\cn^q$. Then, we denote by $\tNuq$ the push-forward distribution on $\Sigma_{3q}$ via the mapping $\Dist(.)$ of the distribution of $(\tdelta_1,\ldots,\tdelta_{3q})$.
\index{g@\textbf{3-connected cubic planar graphs}!zzz@coupling between $\tKq$ and $\hKq$!zzcouplc@\ttt{$\tNuq$}{distribution of $\Dist(\tdelta_1, \dots, \tdelta_{3q})$ with $(\tdelta_k)$ the $\tdist$-edge lengths in $\tKq$}}
\medskip

\index{g@\textbf{3-connected cubic planar graphs}!zzz@coupling between $\tKq$ and $\hKq$!zzcoupld@$(\bhN,\btN)\sim \hNuq \otimes \tNuq$}
\index{g@\textbf{3-connected cubic planar graphs}!zzz@coupling between $\tKq$ and $\hKq$!zzcouple@\ttt{$\widehat \bdelta, \widetilde \bdelta$}{obtained from $\bhN, \btN$ resp.}}
\index{g@\textbf{3-connected cubic planar graphs}!zzabusea@$\tKq=(\cc_q,\tdist) := (\cc_q, \dist_{\cn^{(q)}})$}
\index{g@\textbf{3-connected cubic planar graphs}!zzabuseb@\ttt{$\hKq=(\cc_q,\hdist)$}{$\cc_q$ equipped with $\dstar_{\nu_*}$ (see cubic networks for $\nu_*$)}}
\index{g@\textbf{3-connected cubic planar graphs}!zzz@coupling between $\tKq$ and $\hKq$!zzcouplf@\ttt{$\hell(e), \tell(e)$}{length of edge $e$ in $\hKq, \tKq$ resp.}}
We can now describe the coupling between $\hKq$ and $\tKq$:
\begin{enumerate}
\item Let $\bhN=(\hN_1,\hN_2,\ldots)$ and $\btN=(\tN_1,\tN_2,\ldots)$ be independently sampled according to $\hNuq$ and $\tNuq$ respectively.
\item Construct (in the canonical way depicted in Figure~\ref{fig:coupling}) two sequences $\widehat \bdelta=(\hdelta_1,\ldots,\hdelta_{3q})$ and $\widetilde \bdelta=(\tdelta_1,\ldots,\tdelta_{3q})$, which satisfy~\eqref{eq:couplingSeq}.
\item
Draw a uniformly random $3$-connected cubic planar graph $\Kq$ with $2q$ vertices, and sample uniformly at random a permutation of its edges. List as $(e_1,\ldots,e_{3q})$ its edges in this random ordering.
\item Define $\hKq=(\Kq,(\hell(e_j)))$ and $\tKq=(\Kq,(\tell(e_j)))$ as the metric graphs built from $\Kq$, where for each $j\in [1..3q]$, $\hell(e_j)=\hdelta_j$ and $\tell(e_j)=\tdelta_j$.
\end{enumerate}
It is clear from the construction that $\hKq$ and $\tKq$ defined in this manner follow the required distribution. In the rest of this section, they are always assumed to be sampled jointly in this manner. 
\medskip

\begin{remark}\label{rem:coupling}
In the following, it will be useful to be able to ``choose an edge in $\Kq$ using some information about $\hKq$ but no information from $\tKq$'', or symmetrically. 
To make rigorous sense of this, we can slightly adapt the sampling procedure as follows to better differentiate that $\hKq$ is first drawn, and $\tKq$ subsequently (we use the notation $E^{=i}$ for the set of edges of length $i$ in a 
metric graph with integer edge-lengths):
\begin{enumerate}
\item Sample $\Kq$, $\bhN$ and $\btN$.
\item Consider a random permutation of $E(\Kq)$, and list its edges as $(e_1,\ldots,e_{3q})$ in this random ordering.
\item For each $i\in \Z_{\geq 1}$, define $E^{=i}(\hKq)$ as $\{e_{\hN_{\geq i+1}+1},\ldots,e_{\hN_{\geq i}}\}$.
\item For each $i\in \Z_{\geq 1}$, reorder uniformly at random the edges of $E^{=i}(\hKq)$ and apply the canonical construction of Figure~\ref{fig:coupling} to construct $\tKq$ following $\btN$.
\end{enumerate}
In this way, for every $i\geq 1$ we have $|E^{=i}(\hKq)\cap E^{=i}(\tKq)|=\mathrm{min}(\hN_i,\tN_i)$, and for every $m\in [\hN_{\geq i+1}+1.. \hN_{\geq i}]$ we have
$\mathbb{P}(e_m\in E^{=i}(\tKq))=\frac{\mathrm{min}(\hN_i,\tN_i)}{\hN_i}$. 

Of course, the roles played by $\hKq$ and $\tKq$ are perfectly symmetric, and we can proceed by first sampling $\tKq$ and then $\hKq$ if needs be. \dotfill
\end{remark}

\subsection{First properties of the coupling}\label{sub:propCoupling}

To quantify the number of edges which do not have the same length in both models, we start by proving the following estimate:
\begin{lemma}\label{lem:Ni}
Let $\bhN=(\hN_1,\hN_2,\ldots)$ and $\btN=(\tN_1,\tN_2,\ldots)$ be independently sampled from $\hNuq$ and from $\tNuq$ respectively.  
Then there exist positive constants $a$ and $A$ such that a.a.s. as $q\to\infty$, the following event $\kE_q$ holds:
\begin{itemize}
\item
for every positive integer $i\leq a\log(q)$, we have $\mathrm{min}(\hN_i,\tN_i)\geq q^{3/4}$ and $|\hN_i-\tN_i|\leq q^{2/3}$. 
\item
for all $i\geq A\log(q)$, we have $\hN_i=\tN_i=0$.
\end{itemize} 
\end{lemma}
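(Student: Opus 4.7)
The plan rests on two ingredients: a transfer from the critical Boltzmann model $\cq$ to the conditioned model $\cnq$, and standard concentration for empirical distributions of i.i.d.\ samples. By Lemma~\ref{lem:size_core}, the conditioning event $\{|\cq|=n\}$ has probability $\Theta(q^{-2/3})$, so any event of probability $\varepsilon_q$ under $\cq$ has probability at most $C q^{2/3} \varepsilon_q$ under $\cnq$; super-polynomially small events thus transfer directly. Moreover, under $\cq$ the $3q$ attached networks are i.i.d.\ with law $\mu^{(\kD)}$, so their pole-distances are i.i.d.\ with law $\nustar$, and in particular under $\cq$ the vector $\btN$ has exactly the law of $\bhN$. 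Thus it suffices to prove the two statements for the empirical profile of $3q$ i.i.d.\ $\nustar$-samples, with bounds strong enough to absorb the $q^{2/3}$-factor.

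Write $p_i := \nustar(\{i\})$ and $\mu_i := 3q\,p_i$, so in each model $N_i \sim \mathrm{Binomial}(3q, p_i)$. For the upper range, Lemma~\ref{lem:nustar} gives $p_i \le a_\star e^{-b_\star i}$. Taking $A > 5/(3b_\star)$, Markov's inequality yields $\mathbb{P}_{\cq}(\exists\, j : \hdelta_j \ge A\log q) \le 3q \sum_{i \ge A\log q} p_i = O(q^{1 - b_\star A}) = o(q^{-2/3})$, which transfers to $\cnq$ and shows that a.a.s.\ $\hN_i = \tN_i = 0$ for every $i \ge A\log q$.

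For the lower range, I first need the complementary lower bound $p_i \ge c\,e^{-\lambda i}$ for positive constants $c,\lambda$; this is the main difficulty of the proof. I would establish it by an explicit combinatorial construction: series-composing $i$ copies of a fixed non-trivial non-series cubic network of pole-distance $1$ produces a cubic network of pole-distance exactly $i$ whose $\mu^{(\kD)}$-weight contributes at least $c_0^{\,i}$ for some explicit $c_0 \in (0,1)$. Choosing $a < 1/(4\lambda)$ then forces $\mu_i \ge q^{3/4+\eta}$ for all $i \le a\log q$ and $q$ large. A Chernoff lower-tail bound $\mathbb{P}(N_i \le \mu_i/2) \le e^{-\mu_i/8}$, combined with a union bound over the $O(\log q)$ relevant values of $i$, produces super-polynomial decay, which transfers to $\cnq$ and establishes $\min(\hN_i, \tN_i) \ge q^{3/4}$ a.a.s.

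Finally, to obtain $|\hN_i - \tN_i| \le q^{2/3}$ on the same range, I would apply Bernstein's inequality to each of $\bhN$ and (under $\cq$) $\btN$: since $\mu_i \le 3q$, this yields $\mathbb{P}(|N_i - \mu_i| > q^{2/3}/2) \le 2\exp(-c q^{1/3})$, a union bound over $i \le A\log q$ preserves super-polynomial decay, and the triangle inequality $|\hN_i - \tN_i| \le |\hN_i - \mu_i| + |\tN_i - \mu_i|$ together with the transfer from $\cq$ to $\cnq$ concludes. The main obstacle is the lower bound on $p_i$; if the construction above is judged too ad hoc, an alternative is to extract the required exponential lower bound from the bivariate singularity analysis of $D(z,u)$ already carried out in the proof of Lemma~\ref{lem:nustar}.
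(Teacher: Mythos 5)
Your proposal tracks the paper's proof essentially step for step: transfer from the Boltzmann model $\cq$ to the conditioned model $\cnq$ via Lemma~\ref{lem:size_core} (the conditioning event has probability $\Theta(q^{-2/3})$, so polynomially-many super-polynomially-small events transfer); reduction to the empirical profile of $3q$ i.i.d.\ $\nustar$-samples; an explicit combinatorial lower bound on $p_i:=\nustar(\{i\})$; and binomial concentration plus a union bound (the paper uses Hoeffding where you invoke Bernstein and a Chernoff lower tail, but these are interchangeable here). The approach is correct and matches the paper's.

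One small slip in your lower-bound construction: no non-trivial cubic network has pole-distance $1$. Both poles have degree $1$, so they are attached to non-pole vertices, and hence $\delta(N)\ge 2$ for every non-trivial $N\in\kD$. Also note that series-composing networks of pole-distance $d$ gives pole-distance $id-(i-1)$ (the merged degree-$2$ vertex is smoothed out), not $id$. The paper sidesteps both issues by taking the building block to be an \netl (pole-distance $2$): composing $i-1$ of them yields pole-distance exactly $i$ with Boltzmann weight at least $L(\rho)^{i-1}/D(\rho)$, whence $p_i\ge D(\rho)^{-1}e^{-\kappa(i-1)}$ with $\kappa:=\ln(1/L(\rho))$. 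Taking $a:=1/(5\kappa)$ then gives $3q\,p_i=\Omega(q^{4/5})$ for $i\le a\log q$, and Hoeffding does the rest. Finally, your fallback of extracting the lower bound on $p_i$ from the bivariate singularity analysis of $D(z,u)$ in Lemma~\ref{lem:nustar} would need additional work: that analysis yields $D(\rho,u_0)<\infty$ for some $u_0>1$, which is exactly an \emph{upper} bound on the tail, and one would have to argue separately about a singularity in $u$ to convert it into a pointwise lower bound on the coefficients. The direct combinatorial construction is cleaner.
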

\begin{proof}
Let $p_i:=\nustar(i)$. We can obtain a lower bound for $p_i$ by noticing that any cubic network that is a series of $i$ \netls has pole-distance $i+1$. Hence:
\[
p_i\geq \frac{L(\rho)^{i-1}}{D(\rho)}\geq \frac1{D(\rho)}e^{-\kappa i}, \text{ where }\kappa=\ln(1/L(\rho))>0,\]
and $L(\rho)$ is the generating series of $L$-networks evaluated at its radius of convergence. 

Let $a=\frac1{5\kappa}$.  
Then, for $i\leq a\log(q)$, we have $p_i\geq\frac1{D(\rho)}q^{-1/5}$.
On the other hand, by definition, $\hN_i$ follows a $\mathrm{Binomial}(3q,p_i)$ distribution, so that $\E(\hN_i)=3q\,p_i\geq \frac{3}{D(\rho)}q^{4/5}$. 
By Hoeffding's inequality, we have that, for any $x>0$:
\[\mathbb{P}(|\hN_i-3q\,p_i|\geq xq^{1/2})\leq 2\exp(-2x^2/3),\] so that $\mathbb{P}(|\hN_i-3q\,p_i|\geq q^{2/3}/2)\leq 2\exp(-q^{1/3}/6)$. By Lemma~\ref{lem:size_core}, $\tN_i$ has the same law as $\hN_i$ conditioned on an event of probability $\Theta(q^{-2/3})$, which implies that
\[
   \mathbb{P}(|\tN_i-3q\,p_i|\geq q^{2/3}/2)=O(q^{2/3}\exp(-q^{1/3}/6)). 
\]
Hence, 
\[
    \mathbb{P}\left(\forall i\leq a\log(q), |\hN_i-3q\,p_i|\leq q^{2/3}/2\text{ and }|\tN_i-3q\,p_i|\leq q^{2/3}/2\right)\xrightarrow[q\rightarrow \infty]{}1,
\]
which gives the first property (using the lower bound on $3q\,p_i$).

Regarding the second property,  we proved in Lemma~\ref{lem:nustar} that $\nustar$ has exponential tail. Hence, if $(\hdelta_1,\ldots,\hdelta_{3q})$ 
is a sequence of i.i.d. random variables sampled from $\nustar$, then there exists $b>0$, such that $\mathbb{P}(\mathrm{max}(\hdelta_1,\ldots,\hdelta_{3q})\geq i)=O(q\exp(-bi))$. This is equivalent to the fact that $\mathbb{P}(\hN_j\neq 0\ \mathrm{for\ some}\ j\geq i)=O(q\exp(-bi))$.  
Using again Lemma~\ref{lem:size_core}, we can derive from this property, the fact that $\mathbb{P}(\tN_j\neq 0\ \mathrm{for\ some}\ j\geq i)=O(q^{5/2}\exp(-bi))$. Hence, setting $A=3/b$, this gives the desired result.
\end{proof}

\begin{corollary}\label{coro:coupling}
Conditionally on the event $\kE_q$ defined in the previous lemma, for any $e\in E(\Kq)$ such that $\hell(e)\leq a\log(q)$, and chosen without using any information on $\tKq$, we have:
\[\mathbb{P}(\hell(e)\neq\tell(e))\leq q^{-1/12}.\]
Similarly, for any edge $e\in E(\Kq)$, such that $\tell(e)\leq a\log(q)$, 
  and chosen without using any information on $\hKq$, we have $\mathbb{P}(\hell(e)\neq\tell(e))\leq q^{-1/12}$. 
\end{corollary}
\begin{proof}
By construction of the coupling (or rather of its variant described in Remark~\ref{rem:coupling}), in the first case, we have that: 
\begin{equation}\label{eq:probaDiffH}
    \mathbb{P}(\tell(e)\neq\hell(e)|\hell(e)=i)=\frac{\max(\tN_i-\hN_i,0)}{\tN_i}, \text{ for any }i\in \Z_{>0}.
\end{equation}    
Summing over the values of $i$ in $\{1,..,\lfloor a\log(q)\rfloor\}$, and using the bound for $\tN_i$ and $\hN_i$, this proves the result.

The other case is treated in exactly the same manner.
\end{proof}

\subsection{Truncation of edge-lengths}\label{sub:truncation}

\index{g@\textbf{3-connected cubic planar graphs}!zzz@coupling between $\tKq$ and $\hKq$!zzcouplg@$\ell^{(k)}(e) := \min(\ell(e),k)$}
\index{g@\textbf{3-connected cubic planar graphs}!zzz@coupling between $\tKq$ and $\hKq$!zzcouplh@\ttt{$L^{(k)}(\gamma)$}{$\ell^{(k)}$-length of a path $\gamma$}}
\index{g@\textbf{3-connected cubic planar graphs}!zzz@coupling between $\tKq$ and $\hKq$!zzcoupli@\ttt{$\hdistk, \tdistk$}{first-passage percolation distance from $(\hell^{(k)}(e))$, resp. $(\tell^{(k)}(e))$}}
\index{g@\textbf{3-connected cubic planar graphs}!zzz@coupling between $\tKq$ and $\hKq$!zzcouplj@$\hKqk = (\Kq,\hdistk),\tKqk = (\Kq,\tdistk)$}
Another important ingredient of our proof is the truncation of the edge-lengths. 
Fix $k\geq 1$, then for any metric graph $G=(V,E,(\ell(e)_{e\in E})$, we write $\ellk(e):=\min(\ell(e),k)$. For a path $\gamma=(e_1,\ldots,e_m)$ in $G$, the truncated edge-length of $\gamma$ is defined as $\ellk(e_1)+\cdots+\ellk(e_m)$.

Let us apply this notion to $\hKq$ and $\tKq$. For an edge $e$ of $\Kq$, we denote by $\hellk(e)$ (resp. $\tellk(e)$) its truncated edge-length in $\hKq$ (resp. $\tKq$). For $\gamma$ a path in $\Kq$, its truncated edge-length is denoted by $\hLk(\gamma)$ (resp. $\tLk(\gamma)$) in $\hKq$ (resp. $\tKq$). Furthermore, for two vertices $o,o'\in\Kq$, we denote by $\hdistk(o,o')$ (resp.  $\tdistk(o,o')$) the minimum of $\hLk(\gamma)$ (resp. of $\tLk(\gamma)$) over 
all paths connecting $o$ and $o'$.

Finally, we denote by $\hKqk$ (resp. $\tKqk$) the metric graphs $(\Kq,\hdistk)$ (resp. $(\Kq,\tdistk)$). Applying the results of Section~\ref{sec:modDistances} to $\hKqk$ instead of $\hKq$, we obtain 
 that there exists a constant $\ck$ such that, for every $\eps>0$, we have:
\begin{equation}\label{eq:hKk}
\mathbb{P}\big( |  \hdistk(o,o')-\ck\, \dgr(o,o') | \geq \eps\, q^{1/4}\big) \xrightarrow[q\to\infty]{} 0,
\end{equation}
where $o$ and $o'$ are two random vertices of $\Kq$ (picked independently from anything else).
 Note that $\ck$ is increasing with $k$ and is bounded by $c$. 
 
The purpose of this section is now to prove the analogous statement on $\tKqk$:

\begin{lemma}\label{lem:sameLk}
Let $k\geq 1$. Then, with the same constant $\ck$ as in~\eqref{eq:hKk}, we have for every $\eps>0$
\begin{equation}\label{eq:tKk}
\mathbb{P}\big( |  \tdistk(o,o')-\ck\, \dgr(o,o') | \geq \eps\, q^{1/4}\big) \xrightarrow[q\to\infty]{} 0,
\end{equation}
where $o$ and $o'$ are two random vertices of $\Kq$ (picked independently from anything else).
\end{lemma}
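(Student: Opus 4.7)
The plan is to establish~\eqref{eq:tKk} by coupling $\hKqk$ with $\tKqk$ via the construction of Section~\ref{sub:coupling_two}, showing that $|\hdistk(o,o')-\tdistk(o,o')|=o_{\mathbb{P}}(q^{1/4})$, and then combining this with~\eqref{eq:hKk}. Truncation is essential here: every discrepancy between $\hellk(e)$ and $\tellk(e)$ contributes at most $k-1$ to any path length, so it suffices to control the \emph{number} of mismatched edges along a geodesic.

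Work on the a.a.s.\ event $\kE_q$ of Lemma~\ref{lem:Ni}, on which every edge of $\Kq$ satisfies $\hell(e),\tell(e)\leq A\log q$. The key ingredient is a truncated version of Corollary~\ref{coro:coupling}: for any edge $e\in E(\Kq)$ chosen measurably with respect to $\hKq$ alone and such that $\hell(e)\leq a\log q$, we have $\mathbb{P}(\tellk(e)\neq\hellk(e)\mid\hKq)\leq q^{-1/12}$ on $\kE_q$. Indeed, if $\hell(e)=i\leq k$ then a mismatch forces $\tell(e)\neq i$ and Corollary~\ref{coro:coupling} applies directly; if $k<\hell(e)=i\leq a\log q$ then $\hellk(e)=k$, and $\tellk(e)\neq k$ forces $\tell(e)<k\leq i$, hence again $\tell(e)\neq i$. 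Now let $\gamma^\wedge$ be a $\hdistk$-geodesic between $o$ and $o'$, with $N(\gamma^\wedge)$ edges. Since $\hellk(e)\geq 1$, we have $N(\gamma^\wedge)\leq\hdistk(o,o')$, which is $O_{\mathbb{P}}(q^{1/4})$ by~\eqref{eq:hKk} together with $\dgr(o,o')=O_{\mathbb{P}}(q^{1/4})$ (a consequence of Theorem~\ref{th:LGC_simple} applied with $\nu=\delta_1$). Summing the truncated estimate over the edges of $\gamma^\wedge$ with $\hell(e)\leq a\log q$ gives an expected number of mismatches on those edges of order $q^{1/4-1/12}=o(q^{1/4})$; the remaining ``medium-length'' edges of $\gamma^\wedge$ are handled via the exponential tail of $\nustar$ (Lemma~\ref{lem:nustar}), which ensures that the total number of edges $e$ of $\Kq$ with $\hell(e)>a\log q$ is of much smaller order than $q^{1/4}/k$ in expectation.

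Combining the two contributions,
\[
    |\tLk(\gamma^\wedge)-\hLk(\gamma^\wedge)|\leq k\cdot\#\{e\in\gamma^\wedge:\tellk(e)\neq\hellk(e)\}=o_{\mathbb{P}}(q^{1/4}),
\]
so $\tdistk(o,o')\leq\tLk(\gamma^\wedge)$ yields $\tdistk(o,o')-\hdistk(o,o')\leq o_{\mathbb{P}}(q^{1/4})$. Running the symmetric argument with a $\tdistk$-geodesic and the mirror version of the coupling from Remark~\ref{rem:coupling} gives the reverse bound, and the conclusion follows by comparison with~\eqref{eq:hKk}. The main technical difficulty is handling the ``medium-length'' regime $a\log q<\hell(e)\leq A\log q$, where Lemma~\ref{lem:Ni} does not provide tight concentration for $\hN_i$ and $\tN_i$; the resolution is to track the constants from Lemmas~\ref{lem:Ni} and~\ref{lem:nustar} carefully enough so that the worst-case contribution of $k$ per such edge still sums to $o(q^{1/4})$.
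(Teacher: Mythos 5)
Your overall framework matches the paper's: work on the good event from Lemma~\ref{lem:Ni}, apply the coupling to bound the probability of a truncated-length mismatch along a canonical $\hdistk$-geodesic, control the number of edges on that geodesic, and symmetrize. The treatment of edges with $\hell(e)<k$ via Corollary~\ref{coro:coupling} and the observation that $\{\tellk(e)\neq\hellk(e)\}\subseteq\{\tell(e)\neq\hell(e)\}$ are both correct. However, there is a genuine gap in your handling of edges with $\hellk(e)=k$, i.e.\ $\hell(e)\geq k$, and in particular the ``medium-length'' regime $a\log q<\hell(e)\leq A\log q$ where Lemma~\ref{lem:Ni} gives no per-level concentration.

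You propose to bound the contribution of those edges by arguing that the \emph{total} number of edges $e\in E(\Kq)$ with $\hell(e)>a\log q$ is ``of much smaller order than $q^{1/4}/k$ in expectation.'' This is false in general: by the exponential tail $\nustar([i,\infty))\asymp e^{-b_\star i}$, the expected number of such edges is of order $3q\cdot e^{-b_\star a\log q}=\Theta(q^{1-b_\star a})$. Moreover the constant $a=\tfrac{1}{5\kappa}$ with $\kappa=\ln(1/L(\rho))$ is chosen so small in the proof of Lemma~\ref{lem:Ni} that one cannot hope for $b_\star a>3/4$; generically $b_\star\leq\kappa$, giving $b_\star a\leq 1/5$ and hence $\Theta(q^{4/5})\gg q^{1/4}$ edges with $\hell(e)>a\log q$. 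So the worst-case loss of $k$ per such edge does not sum to $o(q^{1/4})$, and the proposed fix of ``tracking constants carefully'' cannot rescue it.

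The paper resolves this differently: rather than try to control per-level behaviour for large $i$, it works with the \emph{aggregated} quantities $\hN_{\geq k}=\sum_{i\geq k}\hN_i$ and $\tN_{\geq k}$ (via the slightly modified event $\kE_q^{(k)}$ of~\eqref{eq:eventEq}, which only asks for concentration at levels $i<k$ plus $\min(\hN_{\geq k},\tN_{\geq k})\geq q^{3/4}$, and which is still a.a.s.\ by Lemma~\ref{lem:Ni}). Conditionally on $\hellk(e)=k$, the mismatch event $\{\tellk(e)<k\}$ has probability $N_k'/\hN_{\geq k}$ by the coupling, where $N_k'$ is the number of edges with $\hell\geq k$ and $\tell<k$; this is controlled by $N_k'\leq\sum_{i<k}\max(\tN_i-\hN_i,0)\leq kq^{2/3}$, giving the bound $kq^{-1/12}$ in~\eqref{eq:difftruncB}. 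This makes the estimate uniform over all edges with $\hellk(e)=k$ regardless of how large $\hell(e)$ actually is, which is precisely the step your proposal is missing. After this correction the rest of your argument (Markov on the geodesic, $|\hgammak|\leq k\,\dgr(o,o')=O_{\mathbb{P}}(q^{1/4}\log q)$, then comparison with~\eqref{eq:hKk} and symmetry) goes through as you indicate.
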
 
\begin{proof}
Set $\hN_{\geq k}:=\sum_{i\geq k}\hN_i$ and $\tN_{\geq k}:=\sum_{i\geq k}\tN_i$, 
and define $\kE_q^{(k)}$ as the event under which:
\begin{equation}\label{eq:eventEq}
\mathrm{min}(\hN_i,\tN_i)\geq q^{3/4}\quad\text{and}\quad |\hN_i-\tN_i|\leq q^{2/3} \text{ for }i<k, \quad \text{and}\quad \mathrm{min}(\hN_{\geq k},\tN_{\geq k})\geq q^{3/4}.
\end{equation}
This event holds a.a.s. by Lemma~\ref{lem:Ni}.  

Let $e\in E(\Kq)$, chosen without using any information on $\tKqk$. Conditionally on $\kE_q^{(k)}$, by the same reasoning as in Corollary~\ref{coro:coupling}, we have:
\begin{equation}\label{eq:difftruncA}
    \mathbb{P}(\tellk(e)\neq\hellk(e)|\hellk(e) < k)\leq \max_{1\leq i< k}\frac{\max(\tN_i-\hN_i,0)}{\tN_i}\leq q^{-1/12}.
\end{equation}    
On the other hand, let $N_k'$ be the number of edges 
having length at least $k$ in $\hKq$ and smaller than $k$ in $\tKq$. 
By construction of the coupling between $\hKq$ and $\tKq$, 
$N_k'\leq \sum_{i=1}^{k-1}\mathrm{max}(\tN_i-\hN_i,0)$. Hence, if $\kE_q^{(k)}$ holds, $N_k'\leq k\,q^{2/3}$. Therefore, conditionally on $\kE_q^{(k)}$, we have:
\begin{equation}\label{eq:difftruncB}
    \mathbb{P}(\tellk(e)\neq\hellk(e)|\hellk(e) = k)=\frac{N_k'}{\hN_{\geq k}}\leq kq^{-1/12}.
\end{equation}  

\index{g@\textbf{3-connected cubic planar graphs}!zzz@coupling between $\tKq$ and $\hKq$!zzcouplk@\ttt{$\gamma^{(k)}$}{``canonical'' $\mathrm{d}^{(k)}$-shortest path between $o$ and $o'$}}
Now, we define $\hgammak$ as a canonically chosen path among the paths $\gamma$ connecting $o$ to $o'$ such that $\hLk(\gamma)= \hdistk(o,o')$ (e.g. $\hgammak$ is the lexicographically smallest, for the sequence of labels of the visited vertices). For any edge of $\hgammak$, we can apply~\eqref{eq:difftruncA} or~\eqref{eq:difftruncB}.
Moreover, by definition of truncation, for any edge $e$, we clearly have $|\hellk(e)-\tellk(e)|\leq k$. 
Hence, conditionally on $\kE_q^{(k)}$, and in view of~\eqref{eq:difftruncA} and~\eqref{eq:difftruncB}, we have 
\[
\E\left(|\hLk(\hgammak)-\tLk(\hgammak)|\,\Big|\,|\hgammak|=s\right)\leq k^2q^{-1/12}s.
\]
Therefore, conditionally on $\kE_q^{(k)}$, and by Markov's inequality, 
\[
\mathbb{P}\left(|\hLk(\hgammak)-\tLk(\hgammak)|\geq \eps q^{1/4}\,\Big|\,|\hgammak|\leq s\right)\leq \frac{k^2}{\eps}q^{-1/3}s.
\]
It follows directly from Theorem~\ref{thm:3connectedScaling} that a.a.s. $\dgr(o,o')\leq q^{1/4}\log(q)$. Since $k\,\dgr(o,o')\geq \hdistk(o,o')\geq |\hgammak|$, we also have a.a.s.  $|\hgammak|\leq k\,q^{1/4}\log(q)$, so that: 
\[
\mathbb{P}\left(|\hLk(\hgammak)-\tLk(\hgammak)|\geq \eps q^{1/4}\right)\xrightarrow[q\rightarrow \infty]{} 0.
\] 
For any $\eps>0$, the event $\hdistk(o,o')\leq \ck\dgr(o,o')+\eps q^{1/4}$ holds a.a.s. according to~\eqref{eq:hKk}. Since $\hLk(\hgammak)=\hdistk(o,o')$ and $\tdistk(o,o')\leq \tLk(\hgammak)$, we conclude that, for any $\eps>0$, 
\begin{equation}
\mathbb{P}\left(\tdistk(o,o')\leq \ck\dgr(o,o')+\eps q^{1/4}\right)\xrightarrow[q\rightarrow \infty]{} 1.
\end{equation}

Symmetrically, applying the argument to $\tgammak$, we obtain that, for every $\eps>0$, we have a.a.s. $|\hLk(\tgammak)-\tLk(\tgammak)|\leq \eps q^{1/4}$. 
For any $\eps>0$, the event $\hdistk(o,o')\geq \ck\dgr(o,o')-\eps q^{1/4}$ holds a.a.s. according to~\eqref{eq:hKk}. Since $\tLk(\tgammak)=\tdistk(o,o')$ and $\hdistk(o,o')\leq \hLk(\tgammak)$, we get that, for any $\eps>0$, 
\begin{equation}
\mathbb{P}\left(\tdistk(o,o')\geq \ck\dgr(o,o')-\eps q^{1/4}\right)\xrightarrow[q\rightarrow \infty]{} 1,
\end{equation}
which concludes the proof.
\end{proof}

\subsection{Lengths of truncated geodesic paths in both models}\label{sub:lengthTrunc}
We now state and prove additional estimates in view of completing the proof of Proposition~\ref{th:2pointGraph}.

\begin{lemma}\label{lem:thL}
%\ctom{Pas défini les $\hgammak$, à moins que ce soient les mêmes que dans la preuve du Lemme 40. Mais dans ce cas, pourquoi les redéfinir dans le Lemme 43 ?}
%\ceri{ $\hgammak$ est défini un peu plus haut, du coup j'ai ajouté "recall" dans le Lemme 43}

Let $k\geq 1$. Then, for every $\eps>0$, we have
\[
\mathbb{P}\big( |  \tL(\hgammak)-\hL(\hgammak) | \geq \eps\, \dgr(o,o')\big) \xrightarrow[q\to\infty]{} 0, 
\]
where $o$ and $o'$ are two random vertices for $\Kq$ (picked independently from anything else), and where we recall that $\hgammak$ is a canonical geodesic path in $\hKqk$ between them.
\end{lemma}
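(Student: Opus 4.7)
The plan is to compare $\tL(\hgammak)$ and $\hL(\hgammak)$ edge by edge, using the coupling of Section~\ref{sub:coupling_two}. We work on a high-probability event $\mathcal{G}_q$ combining (i) the event $\kE_q$ of Lemma~\ref{lem:Ni}, (ii) the bound $\hell(e),\tell(e)\le A\log q$ for all edges of $\Kq$ (which also follows from Lemma~\ref{lem:Ni}), (iii) $\eps' q^{1/4}\le\dgr(o,o')\le q^{1/4}\log q$ for some $\eps'>0$ (from Theorem~\ref{thm:3connectedScaling}), and (iv) the bound $|\hgammak|\le k\,\dgr(o,o')\le kq^{1/4}\log q$, which follows from $\hellk(e)\ge1$ and $\hLk(\hgammak)=\hdistk(o,o')\le k\,\dgr(o,o')$.

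On $\mathcal{G}_q$, I would write $\tL(\hgammak)-\hL(\hgammak)=\sum_{e\in\hgammak}(\tell(e)-\hell(e))$, and split the sum according to whether $\hell(e)\le a\log q$ (\emph{short edges}) or $\hell(e)>a\log q$ (\emph{long edges}), with $a$ the constant of Lemma~\ref{lem:Ni}. Since $\hgammak$ is a deterministic function of $\hKq$, Corollary~\ref{coro:coupling} gives $\mathbb{P}(\tell(e)\neq\hell(e))\le q^{-1/12}$ for every short edge $e\in\hgammak$. By linearity of expectation and Markov's inequality, the number of mismatched short edges is $o(q^{1/4}/\log q)$ with high probability; since each mismatch contributes at most $2A\log q$ to the sum, the short contribution is $o(q^{1/4})$.

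For long edges, the exponential tail of $\nustar$ (Lemma~\ref{lem:nustar}) shows that the expected number of edges in $\Kq$ with $\hell(e)>a\log q$ is $O(q^{1-\lambda a})$. Choosing $a$ so that $\lambda a>3/4$, this is $o(q^{1/4}/\log q)$; by Markov, the same bound holds with high probability. Each long edge contributes at most $2A\log q$, so the long contribution is also $o(q^{1/4})$. Combining both estimates, $|\tL(\hgammak)-\hL(\hgammak)|=o(q^{1/4})$ with high probability, and since $\dgr(o,o')\ge\eps' q^{1/4}$ on $\mathcal{G}_q$, this yields the lemma.

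The main technical difficulty lies in choosing the truncation threshold $a\log q$ so that the two estimates are simultaneously valid: $a$ must be small enough for the coupling estimate of Lemma~\ref{lem:Ni}/Corollary~\ref{coro:coupling} to give vanishing disagreement probability for short edges, and large enough for the exponential tail of $\nustar$ to control long edges (requiring $\lambda a>3/4$). If the constant $a$ produced by the current proof of Lemma~\ref{lem:Ni} does not satisfy $\lambda a>3/4$, one may refine that proof via a Bernstein-type concentration inequality, which permits $a$ to be taken arbitrarily close to $1/\kappa$, thereby restoring compatibility.
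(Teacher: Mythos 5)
Your short-edge estimate is correct and is essentially the same as the paper's: $\hgammak$ is selected using only $\hKqk$ (hence only $\hKq$), so Corollary~\ref{coro:coupling} applies edge by edge. The gap is in the long-edge estimate, and your own proposed repair does not close it.

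You bound the number of \emph{long} edges (those with $\hell(e)>a\log q$) over the whole graph $\Kq$, which has $3q$ edges; to make $q\cdot\nustar((a\log q,\infty))=O(q^{1-b_\star a})$ negligible compared to $q^{1/4}/\log q$ you indeed need $b_\star a>3/4$. But this inequality cannot in general be arranged. The proof of Lemma~\ref{lem:Ni} lower-bounds $p_i=\nustar(i)$ by $D(\rho)^{-1}e^{-\kappa i}$ with $\kappa=\ln(1/L(\rho))$, which combined with the tail bound $\nustar([i,\infty))\leq a_\star e^{-b_\star i}$ forces $b_\star\leq\kappa$. On the other hand, any concentration argument (Hoeffding or Bernstein) for $\hN_i$ and $\tN_i$ only controls indices $i$ up to $a\log q$ with $\kappa a<1$, so the best one can hope for is $a$ slightly below $1/\kappa$; even then $b_\star a< b_\star/\kappa\leq 1$, and there is no reason this should exceed $3/4$. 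In fact the chain-of-$L$-networks lower bound on $p_i$ is far from tight, so one should expect $b_\star$ to be strictly smaller than $\kappa$, making $b_\star a>3/4$ even less plausible.

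The paper avoids this obstacle by not counting long edges over $\Kq$ but only along $\hgammak$, using the key observation that for an edge $e$ chosen from $\hKqk$ with $\hellk(e)=k$, the conditional law of $\hell(e)$ is $\nustar$ restricted to $[k,\infty)$, so $\mathbb{P}(\hell(e)\geq a\log q\mid\hellk(e)=k)\leq\kappa' q^{-ab_\star}$ with $\kappa'=a_\star/\nustar([k,\infty))$. Since $|\hgammak|=O(q^{1/4}\log q)$, this gives $\mathbb{E}|\tL(\hgammak)-\hL(\hgammak)|=O\bigl(q^{1/4}\log^2 q\,(q^{-1/12}+q^{-ab_\star})\bigr)=O(q^{1/4-\xi})$ with $\xi=\min(1/12,ab_\star)/2>0$, and Markov's inequality (against $\dgr(o,o')\geq q^{1/4}/\log q$) concludes. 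Any $a>0$ suffices; no lower bound on $ab_\star$ is needed. To fix your argument, replace the global count of long edges by this per-edge conditional tail bound along $\hgammak$.
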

\begin{proof}
In all this proof, we work conditionally on the a.a.s. event $\kE_q$ of Lemma~\ref{lem:Ni}, and consider $q$ large enough such that $a\log(q)>k$. For $e$ an edge chosen using solely 
information given by $\hKqk$ and for $i\geq k$, we have: 
\[
\mathbb{P}(\hell(e)\geq i\,|\, \hellk(e)= k)=\nustar([i,\infty))/\nustar([k,\infty)).
\]

Hence, by Lemma~\ref{lem:nustar}, letting $\kappa=a_\star/\nustar([k,\infty))$, we have $\mathbb{P}(\hell(e)\geq a\log(q))\leq \kappa\,q^{-ab_\star}$. Since 
\[
\E\big(|\hell(e)-\tell(e)|\big)\leq A\,\log(q)\left( \mathbb{P}\Big(\tell(e)\neq\hell(e)|\hell(e)<a \log q\Big)+\mathbb{P}\Big(\hell(e)\geq a \log q\Big)\right),
\] 
by Corollary~\ref{coro:coupling}, we get that: 
\[
\E\big(|\hell(e)-\tell(e)|\big)\leq (q^{-1/12}+\kappa\, q^{-ab_\star})A\log(q). \]
We further work conditionally on the a.a.s. event $\kE_q'$ under which $q^{1/4}/\log(q)\leq \dgr(o,o')\leq q^{1/4}\log(q)$. Since $|\hgammak|\leq k\cdot\dgr(o,o')$, we have
\[
\E\big( |  \tL(\hgammak)-\hL(\hgammak) |\big) \leq k(q^{-1/12}+\kappa\, q^{-ab_\star})A\,q^{1/4}\log(q)^2=O(q^{1/4-\xi}),
\]
where $\xi=\mathrm{min}(1/12,ab_\star)/2$. Hence, by Markov's inequality we have
\begin{align*}
\mathbb{P}\Big( |  \tL(\hgammak)-\hL(\hgammak) | \geq \eps\, \dgr(o,o')\Big)&\leq \mathbb{P}\Big( |  \tL(\hgammak)-\hL(\hgammak) | \geq \eps\, q^{1/4}/\log(q)\Big)+\mathbb{P}({}^c(\kE_q\cap \kE_q'))\\
& =O\big(\epsilon^{-1}\log(q)\,q^{-\xi}\big)+o(1)=o(1).
\end{align*}
\end{proof}

\begin{lemma}\label{lem:Nprime}
Let $X$ be a random variable drawn under $\nustar$. There exist positive constants $a',b'$, and an infinite subset $\N'\subset\N$ such that, for every $k\in\N'$ and every $i\geq 0$, we have
\[
\mathbb{P}(X\geq k+i\ |\ X\geq k)\leq a'\,\exp(-b'\,i).
\]
\end{lemma}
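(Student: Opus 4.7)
The plan is to reformulate the desired bound logarithmically and then extract a sparse set of ``good'' thresholds by a short contradiction argument, using only the absolute exponential tail from Lemma~\ref{lem:nustar}. Set $f(k) := \mathbb{P}(X \geq k)$ and $g(k) := -\log f(k)$; then $g$ is non-decreasing, and Lemma~\ref{lem:nustar} gives the linear lower bound $g(k) \geq b_\star k - C_0$ for all $k \geq 0$ and some constant $C_0$ depending only on $a_\star, b_\star$. The target inequality $\mathbb{P}(X \geq k+i \mid X \geq k) \leq a' \exp(-b' i)$ is equivalent to
\[
g(k+i) - g(k) \geq b'\, i - \log a' \qquad \text{for all } i \geq 0.
\]
I would fix $b' := b_\star / 2$ and $a' := 2$ (any $a'>1$ will do), and let $\mathbb{N}'$ denote the set of $k \in \mathbb{N}$ for which this holds for every $i \geq 0$. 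Everything reduces to showing $\mathbb{N}'$ is infinite.

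Assume for contradiction there exists $K_0$ such that every $k \geq K_0$ lies outside $\mathbb{N}'$. Since $a'>1$, the case $i=0$ is automatic, so for each such $k$ one can pick $i(k) \geq 1$ with $g(k+i(k)) - g(k) < b'\, i(k) - \log a'$. Starting from $k_0 := K_0$, iterate $k_{j+1} := k_j + i(k_j)$; this sequence is strictly increasing with $k_J \geq K_0 + J$. Telescoping yields
\[
g(k_J) - g(k_0) < b'\,(k_J - k_0) - J \log a',
\]
while Lemma~\ref{lem:nustar} gives $g(k_J) \geq b_\star k_J - C_0$. Combining,
\[
(b_\star - b')\, k_J + J \log a' \leq C_1,
\]
for a constant $C_1$ independent of $J$. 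Since $b_\star - b' > 0$, $\log a' > 0$ and $k_J \geq K_0 + J$, the left-hand side diverges as $J \to \infty$, the desired contradiction.

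The main subtlety, and really the only obstacle, is to ensure that the telescoped surplus $J \log a'$ actually accumulates, i.e.\ that the iteration produces arbitrarily many steps. This is automatic because each skip satisfies $i(k_j) \geq 1$, forcing $J \to \infty$ together with $k_J$. The specific choice $b' = b_\star / 2$ leaves a strictly positive slack $b_\star - b' > 0$ which absorbs the linear contribution $b'(k_J - k_0)$ against the absolute lower bound $b_\star k_J$ from Lemma~\ref{lem:nustar}; any $b' \in (0, b_\star)$ would in fact suffice.
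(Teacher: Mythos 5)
Your argument is correct, and it takes a genuinely different route from the paper's. The paper works with the point masses $p_i=\mathbb{P}(X=i)$: it sets $p_i':=p_i\,e^{bi/2}$ (which tends to zero), defines $\mathbb{N}'$ explicitly as the set of ``record'' indices $k$ where $p_k'=\max_{i\geq k}p_i'$, and then bounds the conditional probability directly, estimating the numerator by a geometric sum and the denominator by the crude bound $\mathbb{P}(X\geq k)\geq p_k$. You instead work exclusively with the survival function $f(k)=\mathbb{P}(X\geq k)$ and its log $g$, define $\mathbb{N}'$ implicitly as the set of thresholds where the conditional tail decays geometrically, and show infiniteness of $\mathbb{N}'$ by a telescoping contradiction pitting the absolute linear lower bound $g(k)\geq b_\star k-C_0$ against an assumed infinite run of ``bad'' indices. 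Your proof is less constructive (it does not describe $\mathbb{N}'$), but it is arguably cleaner: it never touches the point masses, hence does not need the positivity $p_k>0$ that the paper uses in the denominator bound, and it would extend verbatim to non-atomic distributions. The only hypotheses you use are that $f(k)>0$ for all $k$ (so $g$ is finite; this holds since $\nustar$ has unbounded support) and the exponential tail from Lemma~\ref{lem:nustar}, which are exactly the hypotheses the paper's proof relies on as well.
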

\begin{proof}
Let $p_i=\mathbb{P}(X=i)$. We have seen in the proof of Lemma~\ref{lem:Ni} that $p_i>0$ for any $i\geq 0$ and that $p_i=O(\exp(-bi))$ for some $b>0$. 
Set $p_i':=p_i\exp(bi/2)$. Then $p_i'\to 0$ as $i\to\infty$. Hence, we can define a subset $\N'$ of integers such that, for every $k\in\N'$, 
 $p_k'=\mathrm{max}(p_i',\ i\geq k)$ holds.  Hence, for $k\in\N'$ and $i\geq k$, we have $p_i\leq p_k\exp(-b(i-k)/2)$. This gives
 \[
\mathbb{P}(X\geq k+i\ |\ X\geq k)\leq \frac{\sum_{j\geq i}p_k\,e^{-bj/2}}{\mathbb{P}(\nustar\geq k)}\leq \sum_{j\geq i}e^{-bj/2}=\frac{1}{1-e^{-b/2}}e^{-bi/2}.
\] 
\end{proof}

\begin{lemma}\label{lem:LLk}
There exists a positive constant $\ha$ such that, for every $k\in\N'$, 
\[
\mathbb{P}\Big(\,\hL(\hgammak)\leq \hdistk(o,o')+\frac{\ha}{k}\dgr(o,o')\Big) \xrightarrow[q\to\infty]{} 1,
\]
where $o$ and $o'$ are two random vertices for $\Kq$ (picked independently from anything else), and where we recall that $\hgammak$ is a canonical geodesic path in $\hKqk$ between them.

Moreover, we have $\lim_{k\to\infty}\ck=c$. 
\end{lemma}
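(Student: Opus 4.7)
The key observation is the decomposition
\[
\hL(\hgammak) - \hLk(\hgammak) \;=\; \sum_{e \in \hgammak}\big(\hell(e)-k\big)_+,
\]
whose right-hand side has a clean structure once one conditions on the truncated weights. Let $M$ denote the number of edges on $\hgammak$ with $\hell(e) \geq k$, i.e.\ those on which the truncation is saturated. Each such edge contributes exactly $k$ to $\hLk(\hgammak) = \hdistk(o,o')$, so $Mk \leq \hdistk(o,o')$; combined with \eqref{eq:hKk} and the bound $\ck \leq c$ (truncation only decreases distances), this gives $M \leq (c/k)\dgr(o,o')(1+o(1))$ a.a.s. as $q \to \infty$. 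Let $\mathcal{G}_k$ be the $\sigma$-field generated by $(\hellk(e))_{e \in E(\Kq)}$: the canonical geodesic $\hgammak$, the integer $M$, and the set of its saturated edges are all $\mathcal{G}_k$-measurable, while conditionally on $\mathcal{G}_k$ the excesses $\{\hell(e)-k\,:\,\hellk(e)=k\}$ are independent with common law $\nustar$ restricted to $[k,\infty)$ and shifted by $-k$.

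Fix now $k \in \N'$. By Lemma~\ref{lem:Nprime}, this conditional excess distribution has exponential tail with rate $b'$ independent of $k$, so there exist $\lambda_0>0$ and $C_1 \geq 1$ such that $\E\big[e^{\lambda_0(\hell(e)-k)}\,\big|\,\hell(e)\geq k\big] \leq C_1$ for every $k \in \N'$. A Chernoff bound conditionally on $\mathcal{G}_k$ then yields
\[
\mathbb{P}\Big(\sum_{e \in \hgammak}(\hell(e)-k)_+ \geq t \,\Big|\, \mathcal{G}_k\Big) \;\leq\; \exp\big(-\lambda_0 t + M \log C_1\big) \qquad \text{for all } t>0.
\]
Choose $\ha$ large enough that $\lambda_0 \ha > c \log C_1$ (a choice independent of $k$ and $q$), and apply this with $t = (\ha/k)\dgr(o,o')$. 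On the a.a.s.\ event $\{M \leq (c/k)\dgr(o,o')(1+o(1))\} \cap \{\dgr(o,o') \geq q^{1/4}/\log q\}$ (the lower bound on $\dgr(o,o')$ follows from the convergence in Theorem~\ref{thm:3connectedScaling} together with the fact that two uniform points on the Brownian sphere are at positive distance almost surely), the right-hand side is at most $\exp(-C_2 \dgr(o,o')/k) \leq \exp(-C_2 q^{1/4}/(k \log q))$, which tends to $0$ as $q \to \infty$ for each fixed $k$. Unconditioning gives the first assertion.

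For the convergence $\ck \to c$, truncation yields $\ck \leq c$ and monotonicity of $k \mapsto \ck$. Conversely, $\hdist(o,o') \leq \hL(\hgammak) \leq \hdistk(o,o') + (\ha/k)\dgr(o,o')$ a.a.s.\ by the first part, so combining with \eqref{eq:hK} and \eqref{eq:hKk} gives, for any $\ve>0$ and $k \in \N'$,
\[
c\,\dgr(o,o') - \ve\, q^{1/4} \;\leq\; \Big(\ck + \tfrac{\ha}{k}\Big)\dgr(o,o') + \ve\, q^{1/4}
\]
a.a.s. Dividing by $\dgr(o,o') = \Theta(q^{1/4})$ and letting $q \to \infty$ then $\ve \to 0$ yields $c \leq \ck + \ha/k$ for $k \in \N'$; monotonicity of $\ck$ then extends the limit $\ck \to c$ to all integers.

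The main obstacle is that $\hgammak$ depends intrinsically on the weights, which forbids a naive application of exponential tail bounds to its edges. The clean resolution is to condition on the truncated weights: they determine $\hgammak$ completely, while leaving the excesses of saturated edges conditionally i.i.d.\ with a uniform-in-$k$ exponential tail provided by Lemma~\ref{lem:Nprime}.
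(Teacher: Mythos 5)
Your proposal is correct and follows essentially the same strategy as the paper: decompose the excess $\hL(\hgammak)-\hLk(\hgammak)$ as a sum of conditionally i.i.d.\ overshoots over the saturated edges, observe that $\hgammak$ and its saturated edges are determined by the truncated weights alone, invoke Lemma~\ref{lem:Nprime} for a uniform-in-$k$ exponential tail, and conclude via a Chernoff bound together with a.a.s.\ control of the number of saturated edges and of $\dgr(o,o')$. The paper packages this slightly less explicitly (without naming a $\sigma$-field) and uses the event $\mathcal{H}_q$ with the lower bound $\dgr(o,o')\geq q^{1/6}k/(2c)$ instead of $q^{1/4}/\log q$, but the argument is the same in substance, including the derivation of $\lim_{k\to\infty}\ck=c$.
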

\begin{proof}
Let $\Delta=\hL(\hgammak)-\hLk(\hgammak)$. Then $\Delta=\sum_{e\in E_k}(\ell(e)-k)$, where $E_k$ is the set of edges $e\in\hgammak$ with $\ellk(e)= k$. 
Since $\hgammak$ is selected based solely on $\hKqk$, $\Delta$ is the sum of $|E_k|$ i.i.d. copies of the random variable $\Xk$, whose distribution is given by: 
\[
\mathbb{P}(\Xk\geq i)=\frac{\nustar([k+i,\infty))}{\nustar([k,\infty))}.
\] 
Lemma~\ref{lem:Nprime} gives $\E(\Xk)\leq \frac{a'}{1-e^{-b'}}$.  
For $m\geq 1$, let $S_m$ be the sum of $m$ i.i.d. copies of $\Xk$. 
Given the universal exponential bound on $\Xk$ in Lemma~\ref{lem:Nprime},  
Chernoff's bound ensures that there exist positive constants $a'',b''$ (not depending on $k$) such that
\[
\mathbb{P}\Big(S_m\geq \frac{2a'}{1-e^{-b'}}m\Big) \leq a''\,e^{-b''m},
\]
 and thus $\mathbb{P}\Big(S_{m'}\geq \frac{2a'}{1-e^{-b'}}m\Big) \leq a''\,e^{-b''m}$ whenever $m'\leq m$.

Next, let us define the event $\kH_q$ as: 
\[
    \kH_q:=\left\{\hLk(\hgammak)\leq 2c\,\dgr(o,o') \right\}\,\cap\, \left\{\dgr(o,o')\geq \frac{q^{1/6}k}{2c}\right\}.
\] 
By~\eqref{eq:hKk} and Theorem~\ref{thm:3connectedScaling}, we know that $\mathbb{P}(\kH_q)\to 1$ as $q\to \infty$.
If $\kH_q$ holds, we have $|E_k|\leq \frac{2c}{k}\dgr(o,o')$, so that:
 \[
 \mathbb{P}\left(\Big\{\Delta\geq  \frac{2a'}{1-e^{-b'}}\frac{2c}{k}\dgr(o,o') \Big\}\cap \kH_q \right)  \leq a''\, e^{-b''\,q^{1/6}}=o(1),
 \]
 so that the first statement is proved (with $\hat{a}=\frac{2a'}{1-e^{-b'}}2c$). 
 
It ensures that  we have a.a.s. $\hdist(o,o')\leq \hdistk(o,o')+\frac{\hat{a}}{k}\dgr(o,o')$.  Hence, using~\eqref{eq:hKk}, for any $\eps>0$ we have a.a.s. 
$\hdist(o,o')\leq (\ck+\frac{\hat{a}}{k}+\eps)\,\dgr(o,o')$. This implies that $\ck+\frac{\hat{a}}{k}\geq c$. Since $\ck\leq c$, this gives $\lim_{k\to\infty}\ck=c$.
\end{proof}

\subsection{Proof of Proposition~\ref{th:2pointGraph}}\label{sub:proof2pointmod}
We start by establishing the lower bound for $\tdist(o,o')$ given in~\eqref{eq:2pointGraphDist} of Proposition~\ref{th:2pointGraph}. Fix $\epsilon>0$ and $\eta>0$. By Theorem~\ref{thm:3connectedScaling}, we know that there exists $b>0$ such that for $q$ large enough: 
\[
\mathbb{P}(\dgr(o,o')\leq b\,q^{1/4})\geq 1-\eta/2.
\]
On the other hand, by Lemma~\ref{lem:sameLk}, we have for $q$ large enough: 
\[\mathbb{P}\big(\tdistk(o,o')\geq  \ck\dgr(o,o')-\frac{\eps}{2}q^{1/4}\big)\geq 1-\eta/2.\] 
Then, Lemma~\ref{lem:LLk} grants the existence of $k$ such that $\ck \geq c-\eps/(2b)$. 
Combined with the two preceding inequalities and since $\tdistk(o,o')\leq \tdist(o,o'))$ it yields for $q$ large enough: 
 \[\mathbb{P}\big(\tdist(o,o')\geq c\,\dgr(o,o')-\eps q^{1/4}\big)\geq 1-\eta.\]

Regarding the upper bound, let $k\in\N'$ be large enough such that $\frac{\ha}{k}\leq \frac{\eps}{4b}$. By Lemma~\ref{lem:LLk}, there exists $q_1$ such that, for 
$q\geq q_1$, we have 
\[
\mathbb{P}\Big(\,\hL(\hgammak)\leq \hdist(o,o')+\frac{\eps}{4b}\dgr(o,o')\Big)\geq 1-\eta/8.
\] 
By Lemma~\ref{lem:thL}, there exists $q_2$ such that, for $q\geq q_2$,
\[
\mathbb{P}\Big(\,\tL(\hgammak)\leq \hL(\hgammak)+\frac{\eps}{4b}\dgr(o,o')\Big)\geq 1-\eta/8.
\] 
Hence, for $q\geq\mathrm{max}(q_1,q_2)$, we have
\[
\mathbb{P}\Big(\tL(\hgammak)\leq \hdist(o,o')+\frac{\eps}{2b}\dgr(o,o')  \Big)\geq 1-\eta/4.
\]
Since $\tdist(o,o')\leq \tL(\hgammak)$, we have $\mathbb{P}\Big(\tdist(o,o')\leq \hdist(o,o')+\frac{\eps}{2b}\dgr(o,o')  \Big)\geq 1-\eta/4$. 
Hence, $\mathbb{P}\Big(\tdist(o,o')\leq \hdist(o,o')+\frac{\eps}{2}q^{1/4}  \Big)\geq 1-3\eta/4$.  

Finally, by~\eqref{eq:hK}, there exists $q_3$ such that, for $q\geq q_3$, we have 
\[
\mathbb{P}\Big(\hdist(o,o')\leq c\,\dgr(o,o')+\frac{\eps}{2}q^{1/4}\Big)\geq 1-\eta/4.
\] 
Hence, for $q\geq\mathrm{max}(q_1,q_2,q_3)$, we have $\mathbb{P}\big(\tdist(o,o')\leq c\,\dgr(o,o')+\eps q^{1/4}\big)\geq 1-\eta$. This concludes the proof.

\subsection{Proof of Theorem~\ref{th:scaling3connex}}\label{sub:proofTheo3connexe}
Given Proposition~\ref{th:2pointGraph}, the proof of Theorem~\ref{th:scaling3connex} follows the exact same lines as in the proof of~\eqref{eq:supVF}. The only missing point is a result analogous to Proposition~\ref{prop:lemmm28}, which we now state and prove:
\begin{lemma}\label{lem:lemm28bis}
Let $K$ be any constant larger than $\E(\nustar)$. For $\eps\in(0,1/4)$, define $\tcEq$ as the event where the bound 
\[
\tdist(x,y)\leq K \,\dgr(x,y)+q^{\eps}
\]
holds for every pair  of vertices $x,y$ in $\Kq$. Then $\mathbb{P}(\tcEq)\to 1$ as $q\to\infty$.
\end{lemma}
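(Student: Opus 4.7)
\textbf{Plan for Lemma~\ref{lem:lemm28bis}.} The strategy is to compare the dependent distribution of edge-lengths in $\tKq$ to the i.i.d.\ distribution in $\hKq$ via a conditioning argument, apply a Chernoff bound in the i.i.d.\ model, and then conclude by a union bound over all pairs of vertices.

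The key observation is that for any event $A$ depending only on the pole-distances of the attached networks (i.e., only on the edge-lengths of the $3$-connected core), one has
\[
\mathbb{P}_{\tKq}(A)\ =\ \mathbb{P}_{\cq}\bigl(A \,\big|\, |\cq|=n\bigr)\ \leq\ \frac{\mathbb{P}_{\cq}(A)}{\mathbb{P}_{\cq}(|\cq|=n)}\ \leq\ O(q^{2/3})\cdot \mathbb{P}_{\hKq}(A),
\]
where the final inequality uses Lemma~\ref{lem:size_core} together with the fact that under $\cq$ the pole-distances of the attached networks are i.i.d.\ with law $\nustar$, exactly as in $\hKq$. Next, fix $\mu:=\mathbb{E}(\nustar)$, and let $X\sim\nustar$; since $\nustar$ has exponential tails (Lemma~\ref{lem:nustar}), the cumulant $\psi(\lambda):=\log\mathbb{E}(\exp(\lambda(X-\mu)))$ is well-defined for $\lambda>0$ small enough and satisfies $\psi(0)=\psi'(0)=0$. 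Choosing $\lambda>0$ sufficiently small so that $\psi(\lambda)<\lambda(K-\mu)$, exponential Chebyshev gives, uniformly in $d\geq 0$,
\[
\mathbb{P}(S_d\geq K d + q^{\eps})\ \leq\ \exp\bigl(-\lambda q^{\eps}-(\lambda(K-\mu)-\psi(\lambda))d\bigr)\ \leq\ \exp(-\lambda q^{\eps}),
\]
where $S_d$ is the sum of $d$ i.i.d.\ copies of $\nustar$.

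Finally, conditionally on $\Kq$, for each pair $(x,y)\in V(\Kq)^2$ let $\gamma(x,y)$ be a canonical geodesic (say, the lexicographically smallest shortest path) from $x$ to $y$ in $\Kq$, of combinatorial length $d=\dgr(x,y)$; this choice is determined by $\Kq$ alone. Since $\tdist(x,y)\leq \tL(\gamma(x,y))$ and since $\tL(\gamma(x,y))$ is a sum of $d$ pole-distances (the edges of $\gamma(x,y)$ being determined), the first two steps combine to give
\[
\mathbb{P}\bigl(\tdist(x,y)\geq K\dgr(x,y)+q^{\eps}\,\big|\,\Kq\bigr)\ \leq\ O(q^{2/3})\exp(-\lambda q^{\eps}).
\]
A union bound over the $O(q^2)$ pairs yields $O(q^{8/3})\exp(-\lambda q^{\eps})=o(1)$ for any $\eps>0$, and averaging over $\Kq$ concludes the proof. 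The same argument applies verbatim to the multigraph case, using the corresponding versions of Lemma~\ref{lem:size_core} and Lemma~\ref{lem:nustar} established in Section~\ref{sec:mul}.

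The proof is essentially a combination of existing ingredients; the only delicate step is the conditioning argument, which translates the bound from the i.i.d.\ model $\hKq$ to the dependent model $\tKq$ at the cost of a polynomial factor $q^{2/3}$ — this factor is easily absorbed by the exponential decay $\exp(-\lambda q^{\eps})$ coming from Chernoff, which is why the argument works uniformly in $d$ without splitting into small-$d$ and large-$d$ cases.
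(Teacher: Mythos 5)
Your proof is correct and follows the same high-level strategy as the paper: prove a concentration bound in the i.i.d.\ model $\hKq$, then transfer to $\tKq$ at the cost of a polynomial factor $O(q^{2/3})$ coming from Lemma~\ref{lem:size_core} (or, equivalently, from the conditioning $\mathbb{P}_{\tKq}(A) \leq \mathbb{P}_{\cq}(A)/\mathbb{P}_{\cq}(|\cq|=n)$, which is the same inequality), then union-bound over pairs. The only substantive difference is in the Chernoff step: the paper first proves $\mathbb{P}(\hdist(x,y)>K\,\dgr(x,y))\leq a'\exp(-b'\dgr(x,y))$, which by itself has no slack and is useless when $\dgr(x,y)$ is small, and therefore needs a two-case split (for $\dgr(x,y)\leq q^{\eps/2}$ it separately controls the maximum edge weight $w^{\star}$ to get $\hdist(x,y)\leq w^{\star}\cdot\dgr(x,y)\leq q^{\eps}$). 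You instead keep the additive slack $q^{\eps}$ inside the exponential Markov inequality from the start, yielding $\exp(-\lambda q^{\eps})\exp(-(\lambda(K-\mu)-\psi(\lambda))d)\leq \exp(-\lambda q^{\eps})$ uniformly in $d\geq 0$, which is slightly cleaner and makes the case-split unnecessary. Both arguments are correct and give bounds of the same quality; yours buys a bit of conciseness, and the paper's two-case version is perhaps more elementary since it only uses tail bounds on the maximum and on long sums separately.
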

\begin{proof}
We start by proving a similar result but for $\hKq$ rather than $\tKq$. Let $\hcEq$ be the event where the bound
\[
\hdist(x,y)\leq K \,\dgr(x,y)+q^{\eps}
\]
holds for every pair  of vertices $x,y$ in $\Kq$. Let $x,y$ be two fixed 
vertices in $\Kq$. 
Since $\nustar$ has exponential tail, the Chernoff bound ensures that there exist positive constants $a',b'$ (not depending on $x,y$) such that 
\[
\mathbb{P}(\hdist(x,y)>K \,\dgr(x,y))\leq a'\exp(-b'\,\dgr(x,y)).
\] 
In particular, if $\dgr(x,y)>q^{\eps/2}$, this implies that $\mathbb{P}\Big(\hdist(x,y)>K \,\dgr(x,y)\Big)\leq a'\exp(-b'\,q^{\eps/2})$. 

We now deal with the case where $\dgr(x,y)\leq q^{\eps/2}$.
Let $\wmax$ be the maximal weight over the edges in $\hKq$. The union bound and the fact that $\nustar$ has exponential tail ensure that (with $a_\star,b_\star$ the constants in Lemma~\ref{lem:nustar})
\[
\mathbb{P}(\wmax>q^{\eps/2})\leq 3q\cdot a_\star\cdot\exp(-b_\star\,q^{\eps/2}). 
\] 
Hence, if $\dgr(x,y)\leq q^{\eps/2}$, we have
\[
\mathbb{P}(\hdist(x,y)>q^{\eps})\leq 3q\cdot a_\star\cdot\exp(-b_\star\,q^{\eps/2}) 
\]
Letting $b''$ be a positive constant smaller than $b_\star$ and $b'$, this guarantees the existence of a constant $a''$ such that, for every given pair $x,y$ of vertices in $\Kq$,
\[
\mathbb{P}(\hdist(x,y)>K \,\dgr(x,y)+q^{\eps})\leq a''\exp(-b''\,q^{\eps/2}).
\] 
By the union bound, this gives $\mathbb{P}(\neg{\hcEq})\leq q^2\,a''\exp(-b''\,q^{\eps/2})$. Finally, by Lemma~\ref{lem:size_core}, we obtain that:  
$\mathbb{P}(\neg{\tcEq})=O(q^{2/3}\mathbb{P}(\neg{\hcEq}))=O(q^{8/3}\,a''\exp(-b''\,q^{\eps/2}))=o(1)$. 
\end{proof}
Given the preceding discussion, this concludes the proof of Theorem~\ref{th:scaling3connex}.

\section{Gromov--Hausdorff--Prokhorov distance between a random connected cubic planar graph and its 3-connected core}\label{sec:projection}

The aim of this section is to establish the following result:
\begin{proposition}\label{prop:LY}
Let $(q(n),n\geq 1)$ be such that $q(n)=\alpha n + O(n^{2/3})$ (where $\alpha\approx 0.85$ is defined in Proposition~\ref{prop:core}). 
Let $\cn^{(q(n))}$ be a uniformly random element of $\kC_n^{(q(n))}$. 
Write respectively $\mathrm{dist}_{\cn^{(q(n))}}$ and $\dgr$ for the distance on $K(\cn^{(q(n))})$ induced by the graph distance on $\cn^{(q(n))}$ and for the graph distance on $K(\cn^{(q(n))})$.
Then 
\[\dghp \left(\Big(\cn^{(q(n))},\frac1{n^{1/4}}\dgr,\mu_{\cn^{(q(n))}}\Big),\Big(K(\cn^{(q(n))}),\frac1{n^{1/4}}\dist_{\cn^{(q(n))}},\mu_{K(\cn^{(q(n))})}\Big)\right)\xrightarrow[n\to \infty]{(p)} 0,
\]
where, as usual, $\mu_{\cn^{(q(n))}}$ and $\mu_{K(\cn^{(q(n))})}$ denote respectively the uniform distribution on $V(\cn^{(q(n))})$ and on $V(K(\cn^{(q(n))}))$.

A similar results holds for $\mn^{(q(n))}$, with $q(n)=\tfrac{199}{316} n + O(n^{2/3})$. 
\end{proposition}

By Theorem~\ref{theo:largest3comp} of Section~\ref{sec:largest_3comp}, the size of the $3$-connected core of a random cubic planar graph is $\alpha n+O(n^{2/3})$ with high probability. We established in Section~\ref{sec:two_point_dep} (see the discussion below Theorem~\ref{th:scaling3connex}) that $(K(\cnq),\frac1{q^{1/4}}\dist_{\cnq},\mu_{K(\cnq)})$ converges to the Brownian sphere. Hence Proposition~\ref{prop:LY} directly implies
the joint convergence of $(K(\cn),\frac1{n^{1/4}}\dist_{\cn},\mu_{K(\cn)})$ and of $(\cn,\frac1{n^{1/4}}\dgr,\mu_{\cn})$, which corresponds to the joint convergence of the first and second coordinates stated in Theorem~\ref{th:convJointe}. 
\medskip

As in the previous section, in all this section $q,n$ are related by $q=\alpha n+O(n^{2/3})$. To lighten notation, we drop the explicit dependency of $q$ in $n$ and write $q$ instead of $q(n)$.

Roughly speaking, this result relies on two main steps. First, we have to prove that the Gromov--Hausdorff distance between these two measured metric spaces is small, or in other terms, that any vertex of $\cnq$ is at distance $o(n^{1/4})$ from a vertex of $K(\cnq)$. Then, we need to prove that the Levy--Prokhorov distance between both spaces is small. Roughly speaking, it means that the uniform measure on $K(\cnq)$ and the projection of the masses of the uniform measure on $\cnq$ into its 3-connected core are close for the $\dProk$ distance.

Our approach follows closely the strategy developed in an article by Addario--Berry and Wen~\cite{addario2017joint}, dealing with the joint convergence of a quadrangulation and its simple core. We give the arguments only for graphs, but the exact same approach can be developed for multigraphs.
\bigskip

Following~\cite{addario2017joint}, we first  define  a measure on $V(K(\mathrm{C}_n^{(q)}))$ that reflects a ``projection of masses''.  
%For a fixed $\mathrm{C}_n^{(q)}\in \kC_n^{(q)}$, we define the measure $\mu_{K(\mathrm{C}_n^{(q)})}'$ on $V(K(\mathrm{C}_n^{(q)}))$ as follows. 
Let $v_1,\ldots,v_{2q}$ be the vertices of $K(\mathrm{C}_n^{(q)})$, ordered by increasing label. For $i\in[1..2q]$, let $s_i$ be the total number of vertices of (non-pole) vertices
in the attached networks at the $3$ edges incident to $v_i$, and let $n_i=1+s_i/2$. Then the measure $\mu_{K(\mathrm{C}_n^{(q)})}'$ on $V(\mathrm{C}_n^{(q)})$ is defined by: 
\[
\mu_{K(\mathrm{C}_n^{(q)})}'(v_i)=\frac{n_i}{2n}, \text{ for any }i\in [1..2q].
\]
For a vector $\mathbf{x}=(x_1,\ldots,x_{2q})\in [0,\infty)^{2q}$, we write $|\mathbf{x}|_p=\big( \sum_{i=1}^{2q}x_i^p \big)^{1/p}$, for $p\geq 1$.
Note that, with the notations from above and setting $\mathbf{n}=(n_1,\ldots,n_{2q})$, we have $|\mathbf{n}|_1=2n$. 

%\ctom{J'ai utilisé la notation $\dProk$ dans l'intro. Uniformiser.}
\begin{lemma}\label{lem:dPdistK}
Let $\cnq$ be as in Proposition~\ref{prop:LY}.
Write $\dProk$ for the L\'evy-Prokhorov distance on the metric space $(K(\cnq),\frac1{n^{1/4}}\dgr)$. Then we have 
\[
\dProk\left(\mu_{K(\cnq)},\mu_{K(\cnq)}'\right)\xrightarrow[q\to \infty]{(p)} 0.
\]
\end{lemma}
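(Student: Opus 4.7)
Write $\mu:=\mu_{K(\cnq)}$ and $\mu':=\mu'_{K(\cnq)}$ for short. The plan is to follow the strategy of Addario-Berry and Wen~\cite{addario2017joint}: reduce the L\'evy--Prokhorov comparison to a local averaging statement inside metric cells of $K(\cnq)$, and then establish that averaging through the Boltzmann representation of Section~\ref{sec:marked_3c}.

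First I would use the coupling characterization of $\dProk$ from Section~\ref{sec:defGHP}: for each $\epsilon>0$, it suffices to construct a.a.s.\ a coupling $\pi$ of $\mu$ and $\mu'$ with $\pi(\{(x,y):\dgr(x,y)>\epsilon n^{1/4}\})<\epsilon$. I would pick a partition $V(K(\cnq))=V_1\sqcup\cdots\sqcup V_L$ into cells of $\dgr$-diameter at most $\epsilon n^{1/4}$, with $L=L(\epsilon)$ independent of $n$, and couple the mass of the two measures diagonally within each cell. Such a partition exists a.a.s.: compactness of the Brownian sphere $\mathrm{m}_\infty$ yields a finite $(\epsilon/3)$-net, which transfers to $K(\cnq)$ via the $\dGHP$-convergence of Theorem~\ref{thm:3connectedScaling} together with the correspondence characterization of $\dGHP$. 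Under this diagonal coupling, the uncoupled mass equals $\tfrac{1}{2}\sum_{j=1}^L|\mu(V_j)-\mu'(V_j)|$, so it is enough to show that this quantity tends to $0$ in probability.

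A direct computation gives $\mu'(V_j)-\mu(V_j)=\tfrac{1}{2n}\bigl(\sum_{v_i\in V_j}n_i-|V_j|\cdot\tfrac{n}{q}\bigr)$, which reduces the problem to the local law of large numbers $\sum_{v_i\in V_j}n_i=|V_j|\cdot n/q+o(n)$ in probability, for every fixed $j$. Invoking the Boltzmann representation, $\cnq$ is distributed as $\cq$ conditioned on $|\cq|=n$, and under $\cq$ the attached-network sizes $(M_e)_{e\in E(\mathrm K)}$ are i.i.d.\ samples from the critical Boltzmann distribution $\mu^{(\kD)}$, with $n_i=1+\sum_{e\ni v_i}M_e$. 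Writing $E_j^{\mathrm{in}},E_j^{\mathrm{bd}}$ for the edges of $\mathrm K$ with both, respectively exactly one, endpoint in $V_j$, the quantity to control becomes $|V_j|+2\sum_{e\in E_j^{\mathrm{in}}}M_e+\sum_{e\in E_j^{\mathrm{bd}}}M_e$, and what is needed is a concentration estimate for a sum of $O(n)$ i.i.d.\ Boltzmann variables.

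The main obstacle is this concentration step. The Boltzmann size distribution has tail $\mathbb{P}(|N|\geq k)=\Theta(k^{-3/2})$, hence infinite unconditional variance, so a plain Chebyshev bound is too weak given that the conditioning on $|\cq|=n$ costs a factor $\Theta(n^{2/3})$ (Lemma~\ref{lem:size_core}) when transferring tail estimates from $\cq$ to $\cnq$. The fix is to truncate at the scale $n^{2/3+\epsilon}$, which is valid a.a.s.\ by Lemma~\ref{lem:cnq}: the truncated second moment is then $O(n^{1/3+\epsilon})$ while the maximum is $\leq n^{2/3+\epsilon}$, so Bernstein's inequality yields a super-polynomial tail for the truncated sum, comfortably surviving the $\Theta(n^{2/3})$ loss from conditioning. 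Summing the resulting $o(1)$ bound over the $L$ cells gives $\sum_j|\mu(V_j)-\mu'(V_j)|\to 0$ in probability, which closes the proof. The multigraph case $\mathfrak M_n^{(q)}$ is handled identically using the constants from Section~\ref{sec:mul}.
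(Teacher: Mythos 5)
Your proof is correct, but it replaces the paper's concentration step by a genuinely different one. Both arguments reduce, via compactness of the Brownian sphere and the GHP convergence of Theorem~\ref{thm:3connectedScaling}, to controlling $|\mu_{K(\cnq)}(W)-\mu_{K(\cnq)}'(W)|$ for a bounded family of $\sigma(K(\cnq))$-measurable cells, and both use Lemma~\ref{lem:cnq} to control the largest attached networks. The paper then imports Lemma~6.3 of Addario-Berry and Wen~\cite{addario2017joint}, an exchangeability (sampling-without-replacement) inequality applied \emph{quenched} to the realized vector $\mathbf{n}=(n_1,\ldots,n_{2q})$ with rate $|\mathbf{n}|_2/|\mathbf{n}|_1$: it needs nothing beyond the a.a.s.\ bound $\max_i n_i\leq n^{3/4}$ (hence $|\mathbf{n}|_2\leq 2n^{7/8}$), and in particular never pays any conditioning cost. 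You instead argue \emph{annealed}, through the i.i.d.\ Boltzmann model $\cq$ of Section~\ref{sec:marked_3c}, truncate at $n^{2/3+\epsilon}$, and apply Bernstein --- which forces you to pay the $\Theta(n^{2/3})$ factor from Lemma~\ref{lem:size_core} when transferring back to $\cnq$, and your stretched-exponential tail is built precisely to absorb it. Both routes are sound; the paper's is shorter because the exchangeability inequality is a black box, while yours is more self-contained given the Boltzmann machinery of Section~\ref{sec:marked_3c}. Two small points you should make explicit: the cells $V_j$ must be deterministic functions of the $3$-connected core alone (e.g.\ a lexicographically minimal net), so that given the core the attached-network sizes remain i.i.d.\ under $\cq$; and the target $|V_j|\cdot n/q$ equals $\mathbb{E}\bigl[\sum_{v_i\in V_j}n_i\,\big|\,|\cq|=n\bigr]$ exactly by exchangeability of the $M_e$ given the total size, whereas the unconditional mean $|V_j|/\alpha$ differs from it by only $O(n^{2/3})=o(n)$, so switching between the two in your annealed estimate is harmless.
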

\begin{proof}
Let $\mathbf{n}=(n_1,\ldots,n_{2q})$ be defined from $\cnq$ as above. Lemma~5.3 in~\cite{addario2017joint}\footnote{In~\cite{addario2017joint}, it is assumed that $|\mathbf{n}|_2$ is deterministic, which is not the case here. However, we can apply their lemma to the normalized vector $\tilde{\mathbf{n}}=\mathbf{n}/|\mathbf{n}|_2$, which gives the desired result.} implies that for any vertex-subset $W\subset V(K(\cnq))$, and for any $t>0$, we have
\begin{equation}
\mathbb{P}\left( | \mu_{K(\cnq)}(W) - \mu_{K(\cnq)}'(W) | > \frac{2t|\mathbf{n}|_2}{|\mathbf{n}|_1}+\frac1{2q+1} \right)   \leq 4\exp(-2t^2).
\end{equation}
Here, $|\mathbf{n}|_1=2n$, and by Lemma~\ref{lem:cnq} we know that $\mathrm{max}(n_1,\ldots,n_{2q})\leq n^{3/4}$  a.a.s., which implies $|\mathbf{n}|_2\leq 2\,n^{7/8}$ (since
$\sum_{i=1}^{2q}n_i^2\leq \mathrm{max}(n_1,\ldots,n_{2q})\cdot|\mathbf{n}|_1$). 

Taking $t=n^{1/16}$ and letting $\eps_q=2n^{-1/16}+\frac1{2q+1}=o(1)$, 
we thus have a.a.s. 
$ | \mu_{K(\cnq)}(W) - \mu_{K(\cnq)}'(W) |\leq \eps_q$ for all $W\subseteq V(K(\cnq))$.  

We can then proceed with the exact same arguments as in Corollary 6.2 in~\cite{addario2017joint} (compactness of the Brownian sphere $\map_{\infty}$, and the fact that
 $(K(\cnq),\frac1{q^{1/4}}\dgr,\mu_{K(\cnq)})$ converges toward $\map_{\infty}$ for the GHP topology, as we have already established in Theorem~\ref{thm:3connectedScaling}). 
\end{proof}

\begin{lemma}\label{lem:dPdistC1}
Write $\dtProk$ for the L\'evy-Prokhorov distance on the metric space $(K(\cnq),\frac 1 {n^{1/4}}\dist_{\cnq})$. Then, we have:
\[
\dtProk(\mu_{K(\cnq)},\mu_{K(\cnq)}') \xrightarrow[q\to \infty]{(p)} 0.
\]
\end{lemma}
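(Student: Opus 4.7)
The plan is to derive Lemma~\ref{lem:dPdistC1} from Lemma~\ref{lem:dPdistK} by comparing the two metrics $\frac1{n^{1/4}}\dist_{\cnq}$ and $\frac{c}{n^{1/4}}\dgr$ on $V(K(\cnq))$. The key tool is Theorem~\ref{th:scaling3connex}, which asserts that, for any $\varepsilon>0$,
\begin{equation*}
\mathbb{P}\!\left(\sup_{x,y\in V(K(\cnq))}\Big|\tfrac1{n^{1/4}}\dist_{\cnq}(x,y)-\tfrac{c}{n^{1/4}}\dgr(x,y)\Big|\geq \varepsilon\right)\xrightarrow[n\to\infty]{}0,
\end{equation*}
since $q=\Theta(n)$. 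Call this a.a.s.\ event $A_n^\varepsilon$.

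First, I will record a general robustness statement for the Lévy--Prokhorov distance: if $d_1,d_2$ are two distances on the same set $X$ with $\sup_{x,y}|d_1(x,y)-d_2(x,y)|\leq \varepsilon$, then for every pair of Borel probability measures $\mu,\nu$ on $X$,
\begin{equation*}
d_{\mathrm{LP}}^{d_1}(\mu,\nu)\leq d_{\mathrm{LP}}^{d_2}(\mu,\nu)+\varepsilon,
\end{equation*}
because any $r$-neighbourhood in $d_2$ is contained in the $(r+\varepsilon)$-neighbourhood in $d_1$. I will also use the trivial scaling fact that, for any fixed $\lambda>0$, $d_{\mathrm{LP}}^{\lambda d}(\mu,\nu)\leq \max(1,\lambda)\, d_{\mathrm{LP}}^{d}(\mu,\nu)$, applied with $\lambda=c$.

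Fix $\eta>0$ and $\varepsilon>0$. By the scaling remark and Lemma~\ref{lem:dPdistK} applied to the rescaled graph metric $\tfrac{1}{n^{1/4}}\dgr$, we have
\begin{equation*}
d_{\mathrm{LP}}^{\frac{c}{n^{1/4}}\dgr}(\mu_{K(\cnq)},\mu_{K(\cnq)}')\leq \max(1,c)\,d_{\mathrm{LP}}^{\frac{1}{n^{1/4}}\dgr}(\mu_{K(\cnq)},\mu_{K(\cnq)}')\xrightarrow[n\to\infty]{(p)}0.
\end{equation*}
On the event $A_n^\varepsilon$, the robustness lemma then yields
\begin{equation*}
\dtProk\big(\mu_{K(\cnq)},\mu_{K(\cnq)}'\big)\leq d_{\mathrm{LP}}^{\frac{c}{n^{1/4}}\dgr}(\mu_{K(\cnq)},\mu_{K(\cnq)}')+\varepsilon.
\end{equation*}
Combining these two facts with $\mathbb{P}(A_n^\varepsilon)\to 1$, we obtain $\mathbb{P}(\dtProk(\mu_{K(\cnq)},\mu_{K(\cnq)}')>2\varepsilon)<\eta$ for all $n$ sufficiently large. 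Since $\varepsilon,\eta$ are arbitrary, this gives the convergence in probability.

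No genuine obstacle is expected here: the lemma is essentially a consequence of the uniform metric comparison Theorem~\ref{th:scaling3connex}, together with the already established Lemma~\ref{lem:dPdistK}. The only care needed is in handling the multiplicative constant $c$ (via the elementary scaling bound for $d_{\mathrm{LP}}$) and in observing that the rescaling factor $n^{1/4}$ is the same in both lemmas, so that $q^{1/4}$ in Theorem~\ref{th:scaling3connex} can be absorbed using $q=\Theta(n)$.
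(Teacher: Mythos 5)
Your proof is correct, and it uses a genuinely different metric-comparison input than the paper's proof, though the overall architecture (start from Lemma~\ref{lem:dPdistK} for the rescaled graph distance, then transfer across a metric perturbation) is the same. The paper does \emph{not} invoke Theorem~\ref{th:scaling3connex}: instead it uses only the one-sided, linear bound of Lemma~\ref{lem:lemm28bis} (namely $\dist_{\cnq}(x,y)\leq \beta\,\dgr(x,y)+q^{\varepsilon}$, for $\beta=2\mathbb{E}(\nustar)$), combined with the coupling characterization of $\dProk$: on the good event, any pair $(x,y)$ in the coupling with small $\dgr$-distance automatically has small $\dist_{\cnq}$-distance, so the \emph{same} coupling witnesses smallness of $\dtProk$. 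This is a lighter tool in the sense that Lemma~\ref{lem:lemm28bis} is an easy large-deviation bound, whereas Theorem~\ref{th:scaling3connex} is the main technical result of Section~\ref{sec:two_point_dep}. Your approach buys a cleaner, more general statement (the perturbation stability of $\dProk$ under a uniform metric comparison plus the scaling bound $\dProk^{\lambda d}\leq\max(1,\lambda)\dProk^{d}$), at the cost of invoking the heavier two-sided result, which was available at this point but is not logically needed. Both arguments are correct; note only that when you pass from the event of Theorem~\ref{th:scaling3connex} (stated with $q^{1/4}$ normalization) to your $A_n^\varepsilon$ (stated with $n^{1/4}$), you are implicitly rescaling $\varepsilon$ by $(q/n)^{1/4}\to\alpha^{1/4}$; this is harmless but worth flagging.
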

\begin{proof}
We use the characterization of the L\'evy-Prokhorov distance in terms of couplings.  
Let $V_q=V(K(\cnq))$. Let $\eps>0$ and $\eta>0$.  Our aim is to show that, for $q$ large enough,  with probability at least $1-\eta$ there exists a coupling $\nu$ between 
$\mu_{K(\cnq)}$ and $\mu_{K(\cnq)}'$ such that, for the distance $\frac1{n^{1/4}}\dist_{\cnq}$, we have, with the notation of Section~\ref{sec:defGHP}, 
\[
\nu\Big( (V_q\times V_q)_\eps \Big) \geq 1-\eps.
\] 

Let $\beta=2\E(\nustar)$, and let $\bar{\eps}=\eps/(\beta+1)$. 
By Lemma~\ref{lem:dPdistK} there exists $q_1$ such that, for $q\geq q_1$, with probability at least $1-\eta/2$, there holds the event $\kF_q$ that  
there exists a coupling $\nu$ between 
$\mu_{K(\cnq)}$ and $\mu_{K(\cnq)}'$ such that, for the distance $\frac1{n^{1/4}}\dgr$, we have 
\[
\nu\Big( (V_q\times V_q)_{\bar{\eps}} \Big) \geq 1-\bar{\eps}.
\] 

Let $\kE_q$ be the event that $\dist_{\cnq}(x,y)\leq \beta \dgr(x,y)+q^{1/6}$ for  $x,y\in V_q$. 
Then Lemma~\ref{lem:lemm28bis} guarantees that there exists $q_0$ such that $\mathbb{P}(\kE_q)\geq 1-\eta/2$ for $q\geq q_0$.  
Let $q_2$ be large enough so that
  $n^{-1/4}q^{1/6}\leq \frac{\eps}{\beta+1}$ for $q\geq q_2$.  If $q\geq q_2$ and $\kE_q$ holds, then for every $x,y\in V_q$, 
  we have that $n^{-1/4}\dgr(x,y)\leq \bar{\eps}$ implies $n^{-1/4}\dist_{\cnq}(x,y)\leq \eps$. 
   Hence, if $q\geq q_2$ and if $\kE_q$ and $\kF_q$ hold, then with the same coupling $\nu$ as above, but now for the distance $\frac1{n^{1/4}}\dist_{\cnq}$, we have
  \[
\nu\Big( (V_q\times V_q)_{\eps} \Big) \geq 1-\eps.
\] 
This concludes the proof, since the event $\kE_q\cap \kF_q$ occurs with probability at least $1-\eta$ for $q\geq\mathrm{max}(q_0,q_1,q_2)$.   
\end{proof}

\begin{lemma}\label{lem:dPdistC2}
The GHP distance between  $(K(\cnq),\frac1{n^{1/4}}\dist_{\cnq},\mu_{K(\cnq)}')$ and $(\cnq,\frac1{n^{1/4}}\dgr,\mu_{\cnq})$ is $o(1)$ in probability. 
\end{lemma}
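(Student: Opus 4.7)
The plan is to construct an explicit projection $\pi\colon V(\cnq)\to V(K(\cnq))$ whose pushforward of $\mu_{\cnq}$ is exactly $\mu_{K(\cnq)}'$, and which displaces every vertex by $o(n^{1/4})$ in graph distance; the GHP bound will then follow immediately from the correspondence/coupling characterisation of $\dghp$ recalled at the end of Section~\ref{sec:defGHP}.

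To define $\pi$, every vertex $x$ of $\cnq$ either belongs to $V(K(\cnq))$, in which case we set $\pi(x)=x$, or lies as a non-pole vertex in a unique attached network $N_e$ indexed by an edge $e=\{u_e,v_e\}$ of $K(\cnq)$. In the latter case I partition the $s_e$ non-pole vertices of $N_e$ canonically into two groups of sizes $\lceil s_e/2\rceil$ and $\lfloor s_e/2\rfloor$ (for instance by vertex-label order, or by which side of $N_e$ they sit on in its own decomposition) and send each group to the corresponding pole. A quick count of cardinalities gives $|\pi^{-1}(v_i)|=n_i$ for every $i\in[1..2q]$, so that $\pi_*\mu_{\cnq}=\mu_{K(\cnq)}'$ exactly.

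The displacement of $\pi$ is controlled by the maximum diameter $D_n^\star$ of an attached network of $\cnq$: indeed, since $x$ and $\pi(x)$ lie in the same attached network, we have $\dgr(x,\pi(x))\leq D_n^\star$ for every $x\in V(\cnq)$. Lemma~\ref{lem:cnq} ensures that every attached network has size at most $n^{2/3+\epsilon}$ a.a.s.\ for arbitrary $\epsilon>0$, and the chain of arguments of Section~\ref{sec:bounds} (culminating in Proposition~\ref{prop:diam}) transposes verbatim to Boltzmann cubic networks to show that such a network of size $s$ has diameter $O(s^{1/4+\epsilon})$ a.a.s.\ with exponential rate. A union bound over the $O(n)$ attached networks then yields $D_n^\star = o(n^{1/4})$ a.a.s. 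Defining the correspondence $\mathcal{R}:=\{(\pi(x),x) : x\in V(\cnq)\}$ and the coupling $\nu:=(x\mapsto(\pi(x),x))_*\mu_{\cnq}$, we see that $\nu$ is supported on $\mathcal{R}$ and realises the correct marginals, while the triangle inequality (using that $\dist_{\cnq}$ on $K(\cnq)$ is induced from $\dgr$ on $\cnq$) gives
\[
|\dist_{\cnq}(\pi(x),\pi(y))-\dgr(x,y)|\leq \dgr(x,\pi(x))+\dgr(y,\pi(y))\leq 2D_n^\star,
\]
so that the rescaled distortion is $o(1)$, concluding the proof.

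The main obstacle is the uniform diameter bound on attached networks, since they are not independent Boltzmann networks but are instead coupled by the conditioning on their total size. This is handled by the same polynomial-probability conditioning trick already employed throughout Sections~\ref{sec:marked_3c} and~\ref{sec:bounds}: an attached network of size $s$ in $\cnq$ is distributed as a critical Boltzmann cubic network conditioned on size $s$, an event of probability $\Theta(s^{-5/2})$, so the exponential-rate bounds of Section~\ref{sec:bounds} survive with only a polynomial loss that is absorbed by taking $\epsilon$ slightly smaller.
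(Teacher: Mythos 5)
Your proof is correct and follows essentially the same route as the paper's: build a correspondence pairing each vertex of $\cnq$ with a pole of its attached network, show the induced coupling has marginals $\mu_{\cnq}$ and $\mu_{K(\cnq)}'$, and control the distortion by the maximal diameter of an attached network via Lemma~\ref{lem:cnq} and the diameter bounds of Section~\ref{sec:bounds}. The only cosmetic difference is that you realise the coupling through a \emph{deterministic} projection $\pi$ that splits the $s_e$ non-pole vertices of each attached network evenly between its two poles (valid because every cubic network has an even number of non-pole vertices, so $s_e/2$ is an integer and $|\pi^{-1}(v_i)|=1+s_i/2=n_i$ exactly), whereas the paper draws the target pole uniformly at random; both realise the same pushforward measure and give $\nu(\kR)=1$.
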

\begin{proof}
We follow the argument of projection of masses given in~\cite{addario2017joint}. Let $\kR$ be the correspondence on $V(K(\cnq)),V(\cnq)$ where $u\kR v$ if $u=v$ or if $v$ is a 
non-pole vertex in one of the $3$ networks substituted at the edges incident to $u$. Let $\nu$ be the distribution on pairs in $\kR$ corresponding to drawing a vertex $v\in\cnq$
uniformly, then letting  
$u=v$ if $v\in K(\cnq)$, and otherwise letting $u$ be a random extremity of the edge $e\in K(\cnq)$ in which the network containing $v$ is substituted. Clearly, $\nu$ gives
a coupling between $\mu_{\cnq}$ and $\mu_{K(\cnq)}'$. 

For any $\eps>0$, by Lemma~\ref{lem:cnq} 
and Proposition~\ref{prop:diam}, and the union bound, we have a.a.s. the property that all networks
attached to the edges of $K(\cnq)$ have diameter at most $n^{(2/3+\eps)(1/4+\eps)}$. Taking $\eps$ small enough (e.g. $\eps=1/30$) so that $(2/3+\eps)(1/4+\eps)\leq 1/5$, 
we thus have the a.a.s. property $\kE_q$ that all networks attached to the edges of $K(\cnq)$ have diameter at most $n^{1/5}$. 
When $\kE_q$ holds, the distorsion $\mathrm{dis}(\kR)$ with respect to $\frac1{n^{1/4}}\dgr$ thus satisfies $\mathrm{dis}(\kR)\leq 2\,n^{1/5}n^{-1/4}=2\,n^{-1/20}$.  
Since the coupling gives zero probability to pairs that are not in $\kR$, we conclude that, when $\kE_q$ holds, the GHP distance between 
$(K(\cnq),\frac1{n^{1/4}}\dist_{\cnq},\mu_{K(\cnq)}')$ and $(\cnq,\frac1{n^{1/4}}\dgr,\mu_{\cnq})$ is at most $2\,n^{-1/20}=o(1)$. Since $\kE_q$ holds a.a.s.,  the GHP 
distance between $(K(\cnq),\frac1{n^{1/4}}\dist_{\cnq},\mu_{K(\cnq)}')$ and $(\cnq,\frac1{n^{1/4}}\dgr,\mu_{\cnq})$ is $o(1)$ in probability. 
\end{proof}

Proposition~\ref{prop:LY} then directly follows from Lemma~\ref{lem:dPdistC1} 
 and Lemma~\ref{lem:dPdistC2} (and the fact that the GHP distance satisfies the triangle inequality).

%%%%%%%%%%%%%%%%%%%%%%%%%%%%%%%%%%%%%%%%%%%%%%%%%%%%%%%%%%%%%%%%%%%
%%                                                               %%
%% Use the two commands below for producing your bibliography    %%
%% with bibtex, then comment again the commands and include the  %%
%% content of the .bbl file in this file below the commands.     %%
%%                                                               %%
%%%%%%%%%%%%%%%%%%%%%%%%%%%%%%%%%%%%%%%%%%%%%%%%%%%%%%%%%%%%%%%%%%%

\addcontentsline{toc}{section}{Index of notations}
\printindex

%\newpage

% \bibliographystyle{amsplain}
% \bibliography{GraphesCubiques}

% add below the content of your .bbl file produced by bibtex.

\providecommand{\bysame}{\leavevmode\hbox to3em{\hrulefill}\thinspace}
\providecommand{\MR}{\relax\ifhmode\unskip\space\fi MR }
% \MRhref is called by the amsart/book/proc definition of \MR.
\providecommand{\MRhref}[2]{%
  \href{http://www.ams.org/mathscinet-getitem?mr=#1}{#2}
}
\providecommand{\href}[2]{#2}

%%%%%%%%%%%%%%%%%%%%%%%%%%%%%%%%%%%%%%%%%%%%%%%%%%%%%%%%%%%%%%%%%%%
%%                                                               %%
%% You may add acknowledgments (optional).                       %%
%%                                                               %%
%%%%%%%%%%%%%%%%%%%%%%%%%%%%%%%%%%%%%%%%%%%%%%%%%%%%%%%%%%%%%%%%%%%
% \begin{acks}
% \cmar{We are grateful to an anonymous referee, whose careful reading significantly improved the presentation of this paper.}
% \end{acks}

%%%%%%%%%%%%%%%%%%%%%%%%%%%%%%%%%%%%%%%%%%%%%%%%%%%%%%%%%%%%%%%%%%%
%%                                                               %%
%% You have reached the end of your document.                    %%
%%                                                               %%
%%%%%%%%%%%%%%%%%%%%%%%%%%%%%%%%%%%%%%%%%%%%%%%%%%%%%%%%%%%%%%%%%%%

\end{document}